\newcommand\err{\texttt{err}}
\newcommand\eff{\texttt{eff}}
\def\ds{\displaystyle}
\def\O{\Omega}
\def\l{\lambda}
\renewcommand\sp{\mathop{\mathrm{Sp}}\nolimits}
\newcommand{\jump}[1]{\llbracket #1 \rrbracket}
\newcommand\bu{\boldsymbol{u}}
\newcommand\bv{\boldsymbol{v}}
\newcommand\bw{\boldsymbol{w}}
\newcommand\br{\boldsymbol{r}}
\newcommand\bs{\boldsymbol{s}}
\newcommand\bn{\boldsymbol{n}}
\def\hdel{\widehat{\delta}}
\newcommand\bF{\boldsymbol{f}}
\newcommand\bT{\boldsymbol{T}}
\def\CT{{\mathcal T}}
\newcommand\btau{\boldsymbol{\tau}}
\newcommand\eu{\texttt{e}_{\boldsymbol{u}}}
\newcommand\ep{\texttt{e}_{p}}
\renewcommand\H{\mathrm{H}}
\renewcommand\L{\mathrm{L}}
\renewcommand\O{\Omega}
\renewcommand\div{\mathop{\mathrm{div}}\nolimits}
\renewcommand\sp{\mathop{\mathrm{sp}}\nolimits}
\newcommand{\vertiii}[1]{{\left\vert\kern-0.25ex\left\vert\kern-0.25ex\left\vert #1 
    \right\vert\kern-0.25ex\right\vert\kern-0.25ex\right\vert}}
\crefname{hypothesis}{Hypothesis}{Hypotheses}
\title{Finite Element Analysis of the Oseen eigenvalue problem\thanks{Submitted to the editors DATE.
\funding{The first author was partially supported by
	DIUBB through project 2120173 GI/C Universidad del B\'io-B\'io and ANID-Chile through FONDECYT project 11200529 (Chile).
	The second author was supported by Universidad de Los Lagos Regular R02/21  and ANID-Chile through FONDECYT project 1231619 (Chile).  The third author was partially supported by the National Agency for Research and Development, ANID-Chile through project Anillo of
	Computational Mathematics for Desalination Processes ACT210087, FONDECYT Postdoctorado project 3230302, and by project Centro de Modelamiento Matemático (CMM), FB210005, BASAL funds for centers of excellence.
}}}
\author{Felipe Lepe\thanks{GIMNAP-Departamento de Matem\'atica, Universidad del B\'io - B\'io, Casilla 5-C, Concepci\'on, Chile. \email{flepe@ubiobio.cl}.}
\and Gonzalo Rivera\thanks{Departamento de Ciencias Exactas,
	Universidad de Los Lagos, Casilla 933, Osorno, Chile. \email{gonzalo.rivera@ulagos.cl}.}
\and Jesus Vellojin\thanks{GIMNAP-Departamento de Matem\'atica, Universidad del B\'io - B\'io, Casilla 5-C, Concepci\'on, Chile. \email{jvellojin@ubiobio.cl}.}}
\def\CE{\mathcal{E}}
\def\CT{{\mathcal T}}
\begin{document}

\maketitle

\begin{abstract}
We propose and analyze a finite element method for the Oseen eigenvalue problem. This problem is an extension of the Stokes eigenvalue problem, where the presence of the convective term leads to a non-symmetric problem and hence, to complex eigenvalues and eigenfunctions.  With the aid of the compact operators theory, we prove that for inf-sup stable finite elements the convergence holds and hence, error estimates for the eigenvalues and eigenfunctions are derived. We also propose an a posteriori error estimator which results to be reliable and efficient. We report a series of numerical tests in two and three dimension in order to assess the performance of the method and the proposed estimator.

\end{abstract}

\begin{keywords}
  Oseen  equations, eigenvalue problems,  error estimates,  mixed problems
\end{keywords}

\begin{AMS}
  35Q35,  65N15, 65N25, 65N30, 65N50
\end{AMS}

\section{Introduction}\label{sec:intro}
In fluid mechanics, the knowledge of the eigenvalues and eigenfunctions associated to spectral problems are important since they are the core in the analysis of stability of certain systems.  Since several partial differential equations are difficult to solve analytically, the development of numerical techniques to approximate accurately the solutions is such equations play an important role in mathematics and engineering. In the case of the Stokes eigenvalue problem, the literature is abundant, where several formulations and numerical methods have emerged. On this subject we mention \cite{MR3197278,MR3095260,MR4071826,MR3864690,MR4022421,MR4077220,LRVSISC,MR2473688,MR4229296,MR3335223} and the references therein.

The Oseen problem appears as a linearization of the Navier-Stokes equations. For the best of the author's knowledge, the eigenvalue problem associated to the Ossen system has not been analyzed, at least from a numerical point of view. This problem, contrary to the Stokes eigenvalue problem, results to be non-selfadjoint due to the presence of the convective term. Hence, the eigenfunctions and eigenvalues must be found naturally on  complex spaces, which clearly is a difference with the Stokes eigensystem. This is of course the theoretical implication since in practice, particularly on the computational tests, it is possible to recover real solutions instead of complex eigenvalues and eigenfunctions. The relation between the viscosity and the convective velocity of the system promotes the appearance of complex eigenvalues. However, for real applications of Stokes and Oseen, the Reynolds number is assumed to be small, and therefore it is more feasible to recover real eigenvalues \cite{John2016}.

The non-symmetric eigenvalue problems need a different treatment for analysis compared with the symmetric problems, particularly in the incorporation of the dual problems. This is needed for convergence in norm and error analysis for numerical methods as is shown in \cite{MR3452773,MR4660149, MR4320859,MR3895875,MR4497827}. Clearly the research on non-symmetric eigenvalue problems is in ongoing progress where the results for different partial differential equations and methods to solve the spectral problems are emerging.

Our contribution is a conforming finite element method to approximate the eigenvalues and eigenfunctions of the Oseen eigenvalue problem. The Oseen problem  is clearly different from the Stokes problem due to the presence of the convective term. Hence, the natural question is if the convective term goes to zero on the eigenvalue problem, the spectrum of Oseen approaches to the spectrum of the Stokes eigensystem as it happen in other eigenvalue problems like in   \cite{MR4570534}, where the mixed elasticity eigenvalue problem converge to the Stokes problem when the corresponding Lam\'e constant explodes. In the case of the Oseen eigenvalue problem, we focus when the convective vector goes to zero in norm and the convergence in the limit to the Stokes spectrum. This must be reflected on the computational experiments.

Related to the numerical method, we focus on inf-sup stable conforming finite elements for the Stokes problem. In particular, we consider in our method two families: the mini element and Taylor-Hood finite elements. Both families are suitable choices for the approximation and our intention is to compare their performance on our problem. With these finite elements we derive a priori error estimates  which we derive according to the compact operators theory presented in  \cite{MR1115235,MR2652780}. 


The outline of our manuscript is as follows: In Section \ref{sec:model_problem} we present the Oseen eigenvalue problem and its variational formulation written in terms of the velocity and pressure. We recall key  properties of the problem such as stability and regularity of the eigenfunctions, leading to the introduction of the corresponding solution operator. Since our problem is non-selfadjoint, we also introduce the adjoint problem and its properties. For completeness, we present a result that relates the spectrum of the Oseen eigenvalue problem with the Stokes one. In Section \ref{sec:fem}  we present the finite element method in which our analysis is supported. This numerical scheme consists in the inf-sup stable families for the mini element and Taylor-Hood elements. We report convergence and a priori error estimates for the eigenvalues and eigenfunctions. In Section \ref{sec:apost} we introduce an a posteriori error estimator for the primal and adjoint eigenvalue problems that result to be reliable and efficient. Finally, in Section \ref{sec:numerics} we report a series of numerical tests to analyze the performance and accuracy of the methods in two and three dimensions. We also perform several tests in order to analyze the behavior of the a posteriori estimator, where two and three dimensional domains with singularities are considered.


\subsection{Notations and preliminaries}
Throughout this work, we will use notations that will allow a smoother reading of the content. Let us set these notations. Given $d \in\{2,3\}$, we denote $\mathbb{C}^{d \times d}$ as  the space of square real matrices of order $d$, where $\mathbb{I}:=\left(\delta_{i j}\right) \in \mathbb{C}^{d \times d}$ denotes the identity matrix. Given $\boldsymbol{A}:=\left(A_{i j}\right), \boldsymbol{B}:=\left(B_{i j}\right) \in \mathbb{C}^{n \times n}$, we define 
$$
 \boldsymbol{A}: \boldsymbol{B}:=\sum_{i, j=1}^d A_{i j} \overline{B}_{i j},
$$
as the tensorial product between $\boldsymbol{A}$ and $\boldsymbol{B}$. The entry $\overline{B}_{i j}$ represent the complex conjugate of $B_{i j}$. Similarly, given two vectors $\bs:=(s_i), \br:=(r_i) \in  \mathbb{C}^d$, we define the products
$$
\bs\cdot\br:= \sum_{i=1}^d s_i \overline{r}_i, \qquad \bs \otimes \br : = \bs \overline{\br}^\texttt{t}  = \sum_{i=1}^d\sum_{j=1}^d s_i \overline{r}_j,
$$
as the dot and dyadic product in $\mathbb{C}$, respectively, where $(\cdot)^\texttt{t}$ denotes the transpose operator.

In turn, in what follows we will resort to a standard simplified terminology for Sobolev spaces and norms. Let $\mathcal{O}$ be a subset of $\mathbb{R}^d$ with Lipschitz boundary $\partial\mathcal{O}$.   For $r \geq 0$ and $p \in[1, \infty]$, we denote by $\mathrm{L}^p(\mathcal{O},\mathbb{C})$ and $\mathrm{W}^{r, p}(\mathcal{O},\mathbb{C})$ the usual Lebesgue and Sobolev spaces of maps from $\mathcal{O}$ to $\mathbb{C}$, and endowed with the norms $\|\cdot\|_{\mathrm{L}^p(\mathcal{O})}$ and $\|\cdot\|_{\mathrm{W}^{r, p}(\mathcal{O})}$, respectively, where  $\mathrm{W}^{0, p}(\mathcal{O},\mathbb{C})=\mathrm{L}^p(\mathcal{O},\mathbb{C})$. If $p=2$, we write $\mathrm{H}^r(\mathcal{O},\mathbb{C})$ instead of $\mathrm{W}^{r, 2}(\mathcal{O},\mathbb{C})$, and denote the corresponding Lebesgue and Sobolev norms by $\|\cdot\|_{0, \mathcal{O}}$ and $\|\cdot\|_{r, \mathcal{O}}$, respectively. As usual, we write $|\cdot|_{r, \mathcal{O}}$ to denote the seminorm. In particular, for $p=\infty$, we will denote $\Vert\cdot\Vert_{\infty,\mathcal{O}}$ as the induced norm over the space $\mathrm{W}^{1,\infty}(\mathcal{O},\mathbb{C})$.  We define $\mathrm{H}_0^1(\mathcal{O},\mathbb{C})$ as the space of functions in $\mathrm{H}^1(\mathcal{O},\mathbb{C})$ with vanishing trace on $\partial \mathcal{O}$, and $\mathrm{L}_0^2(\mathcal{O},\mathbb{C})$ as the space of $\mathrm{L}^2(\mathcal{O},\mathbb{C})$ functions with vanishing mean value over $\mathcal{O}$.

\section{The model problem}
\label{sec:model_problem}

Let  $\O\subset\mathbb{R}^d$,  with $d\in\{2,3\}$,  be an open bounded polygonal/polyhedral domain with Lipschitz boundary $\partial\O$. 
The problem of interest in this study is based on the Oseen equations, whose main characteristic is to model incompressible  viscous fluids at small Reynolds number. The corresponding strong form of the equations are given as:
\begin{equation}\label{def:oseen-eigenvalue}
\left\{
\begin{array}{rcll}
-\nu\Delta \bu + (\boldsymbol{\beta}\cdot\nabla)\bu + \nabla p&=&\lambda\bu,&\text{in}\,\O,\\
\div \bu&=&0,&\text{in}\,\O,\\
\displaystyle\int_{\O} p &=&0, &\text{in}\,\O,\\
\bu &=&\boldsymbol{0},&\text{in}\,\partial\O,
\end{array}
\right.
\end{equation}
where $\bu$ is the displacement, $p$ is the pressure and $\boldsymbol{\beta}$ is a given vector field, representing a \textit{steady flow velocity} such that $\boldsymbol{\beta}\in \mathrm{W}^{1,\infty}(\O,\mathbb{C})^d$ is divergence free and $\nu>0$ is the kinematic viscosity. Over this last  parameter, we assume that there exists two positive  numbers $\nu^+$ and $\nu^{-}$ such that $\nu^{-}< \nu< \nu^{+}$. 

The standard assumptions on the coefficients are the following (see \cite{John2016})
\begin{itemize}
\item $\|\boldsymbol{\beta}\|_{\infty,\O}\sim 1$ if $\nu\leq \|\boldsymbol{\beta}\|_{\infty,\O}$,
\item $\nu\sim 1$ if $\|\boldsymbol{\beta}\|_{\infty,\O}<\nu$,
\end{itemize}
where the first point is the case more close to the real applications. Now we need some regularity assumptions on the convective coefficient. In two dimensions, le us assume that there exists $\varepsilon_1>0$ such that $\boldsymbol{\beta}\in\L^{2+\varepsilon_1}(\O,,\mathbb{C})^d$ and in three dimensions $\boldsymbol{\beta}\in\L^3(\O,\mathbb{C})^d$. With these regularity assumptions at hand, we have the skew-symmetry property of the convective term (see \cite[Remark 5.6]{John2016}) which claims that for all $\bv\in\H_0^1(\O,\mathbb{C})^d$, there holds
\begin{equation}
\label{eq:skew}
\int_{\O}(\boldsymbol{\beta}\cdot\nabla)\bv\cdot\bv=0\quad\forall\bv\in\H_0^1(\O,\mathbb{C})^d.
\end{equation}

Now we introduce a variational formulation for \eqref{def:oseen-eigenvalue}. To simplify the presentation fo the material, let us define the spaces $\mathcal{X}:=\H_0^1(\O,\mathbb{C})^d\times \L_0^2(\O,\mathbb{C})$ and its dual space $\mathcal{X}'$ together with the space space $\mathcal{M}:=\H_0^1(\O,\mathbb{C})^d\times\H_0^1(\O,\mathbb{C})^d$ and its dual $\mathcal{M}'$. For the space $\mathcal{X}$  we define the norm $\|\cdot\|_{\mathcal{X}}^2:=\|\cdot\|_{1,\O}^2+\|\cdot\|_{0,\O}^2$ whereas for $\mathcal{M}$ the norm will be $\|(\bv,\bw)\|_{\mathcal{M}}^2=\|\bv\|_{1,\O}^2+\|\bw\|_{1,\O}^2$, for all $(\bv,\bw)\in\mathcal{M}$. 

We follow the usual approach as in the Stokes model and we multiply such a system with suitable tests functions, integrate by parts, and use the boundary conditions in order to obtain the following bilinear forms $a:\mathcal{M}\rightarrow\mathbb{C}$
and $b:\mathcal{X}\rightarrow\mathbb{C}$ defined by 
\begin{equation*}
a(\bu,\bv):=\int_{\Omega}\nu\nabla\bu :\nabla\bv + \int_{\Omega}(\boldsymbol{\beta}\cdot\nabla)\bu\cdot\bv\quad
\text{and}
\quad
b(\bv,q):=-\int_{\O}q\,\div\bv.
\end{equation*}
Observe that the resulting eigenvalue problem will be non-symmetric since the bilinear form $a(\cdot,\cdot)$ is precisely a non-symmetric bilinear form and, rigorously speaking, the eigenvalues are complex.  
With these sesquilinear forms at hand, we write the following weak formulation for problem \eqref{def:oseen-eigenvalue}: Find $\lambda\in\mathbb{C}$ and $(\boldsymbol{0},0)\neq(\bu,p)\in \mathcal{X}$ such that 
\begin{equation}\label{def:oseen_system_weak}
	\left\{
	\begin{array}{rcll}
a(\bu,\bv) + b(\bv,p)&=&\lambda(\bu,\bv)&\forall \bv\in \H_0^1(\O,\mathbb{C})^d,\\
b(\bu,q)&=&0&\forall q\in \L_0^2(\O,\mathbb{C}).
\end{array}
	\right.
\end{equation}

Let us define the kernel $\mathcal{K}$ of $b(\cdot,\cdot)$ as follows
\begin{equation*}
\mathcal{K}:=\{\bv\in\H_0^1(\O,\mathbb{C})^d\,:\,  b(\bv, q)=0\,\,\,\,\forall q\in\L_0^2(\O,\mathbb{C})\}.
\end{equation*}
With this space at hand, is easy to check with the aid of \eqref{eq:skew} that  $a(\cdot,\cdot)$ is $\mathcal{K}$-coercive. Moreover, the bilinear form $b(\cdot,\cdot)$ satisfies the following inf-sup condition
\begin{equation}
\label{ec:inf-sup_cont}
\displaystyle\sup_{\btau\in\H_0^1(\O,\mathbb{C})^d}\frac{b(\btau,q)}{\|\btau_h\|_{1,\O}}\geq\beta\|q\|_{0,\O}\quad\forall q\in\L^2_0(\O,\mathbb{C}).
\end{equation}
Hence, we introduce the so-called solution operator, which we denote by $\bT$ and is defined as follows
\begin{equation}\label{eq:operador_solucion_u}
	\bT:\L^2(\O,\mathbb{C})^d\rightarrow \H^1(\O,\mathbb{C})^d,\qquad \boldsymbol{f}\mapsto \bT\boldsymbol{f}:=\widehat{\bu}, 
\end{equation}
where the pair  $(\widehat{\bu}, \widehat{p})\in\mathcal{X}$ is the solution of the following well posed source problem
\begin{equation}\label{def:oseen_system_weak_source}
	\left\{
	\begin{array}{rcll}
a(\widehat{\bu}, \bv)+b(\bv,\widehat{p})&=&(\boldsymbol{f},\bv)&\forall \bv\in \H_0^1(\O,\mathbb{C})^d,\\
b(\widehat{\bu},q)&=&0&\forall q\in \L_0^2(\O,\mathbb{C}),
\end{array}
	\right.
\end{equation}
implying  that $\bT$ is well defined due to the Babu\^{s}ka-Brezzi theory. Moreover, 
we have the following estimates for the velocity and pressure \cite[Lemma 5.8]{John2016}
\begin{equation}\label{eq:estimatefuente_velocity}
\ds \nu\|\nabla\widehat{\bu}\|_{0,\O}\leq C_{p}\|\boldsymbol{f}\|_{0,\O},
\end{equation}
where $C_{p}>0$ represents the constant of the  Poincar\'e-Friedrichs inequality.
whereas for the pressure we have
\begin{equation*}
\label{eq:estimatefuente_pressure}
\ds \|\widehat{p}\|_{0,\O}\leq \frac{1}{\beta}\left( C_{p}\|\boldsymbol{f}\|_{0,\O}+\nu^{1/2}
\|\nabla\widehat{\bu}\|_{0,\O}\left(\nu^{1/2}+\dfrac{C_p\|\boldsymbol{\beta}\|_{\infty,\O}}{\nu^{1/2}}\right)\right),
\end{equation*}
where $\beta$ is the constant associated with the inf-sup condition \eqref{ec:inf-sup_cont}.

We observe that $(\lambda , (\bu , p)) \in  \mathbb{C}  \times \mathcal{X}$  solve \eqref{def:oseen_system_weak} if and only if $(\kappa , \bu )$ is an eigenpair of $\bT , i.e., \bT \bu  = \kappa \bu$  with $\kappa  := 1/\lambda$.

Using the fact that the convective term is well defined and taking advantage of the well known Stokes regularity results (see \cite{MR975121,MR1600081} for instance), we have  the following additional regularity result for the solution of the source problem \eqref{def:oseen_system_weak_source} and consequently, for the generalized eigenfunctions of $\bT$.
\begin{theorem}\label{th:regularidadfuente}
There exists $s>0$  that for all $\boldsymbol{f} \in \L^2(\O,\mathbb{C})^d$, the solution $(\widehat{\bu},\widehat{p})\in\mathcal{X}$ of problem \eqref{def:oseen_system_weak_source}, satisfies for the velocity $\widehat{\bu}\in  \H^{1+s}(\Omega,\mathbb{C})^d$, for the pressure $\widehat{p}\in \H^s(\Omega,\mathbb{C})$, and
 \begin{equation*}
\|\widehat{\bu}\|_{1+s,\O}+\|\widehat{p}\|_{s,\O}\leq C \|\boldsymbol{f}\|_{0,\O},
 \end{equation*}
 where $C:=\dfrac{C_p}{\beta}\max\left\{1,\dfrac{C_p\|\boldsymbol{\beta}\|_{\infty,\O}}{\nu}\right\}$.
\end{theorem}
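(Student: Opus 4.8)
The plan is to treat \eqref{def:oseen_system_weak_source} as a Stokes problem with a perturbed right-hand side and then invoke the classical Stokes regularity theory. First I would rewrite the strong form of the source problem as
\begin{equation*}
-\nu\Delta\widehat{\bu}+\nabla\widehat{p}=\boldsymbol{g},\qquad \div\widehat{\bu}=0\quad\text{in }\O,\qquad \widehat{\bu}=\boldsymbol{0}\quad\text{on }\partial\O,
\end{equation*}
where $\boldsymbol{g}:=\boldsymbol{f}-(\boldsymbol{\beta}\cdot\nabla)\widehat{\bu}$ collects the original datum together with the convective term, now moved to the right-hand side. Since $\widehat{\bu}\in\H_0^1(\O,\mathbb{C})^d$ is already known to exist and be unique by the Babu\v{s}ka--Brezzi theory, this identity is legitimate in the distributional sense, so that $(\widehat{\bu},\widehat{p})$ is genuinely a Stokes pair with datum $\boldsymbol{g}$.

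The key point to check is that $\boldsymbol{g}\in\L^2(\O,\mathbb{C})^d$, since only then does the Stokes lifting apply. As $\boldsymbol{\beta}\in\mathrm{W}^{1,\infty}(\O,\mathbb{C})^d\subset\L^\infty(\O,\mathbb{C})^d$ and $\nabla\widehat{\bu}\in\L^2(\O,\mathbb{C})^{d\times d}$, the convective term is controlled by
\begin{equation*}
\|(\boldsymbol{\beta}\cdot\nabla)\widehat{\bu}\|_{0,\O}\leq\|\boldsymbol{\beta}\|_{\infty,\O}\,\|\nabla\widehat{\bu}\|_{0,\O},
\end{equation*}
whence $\boldsymbol{g}\in\L^2(\O,\mathbb{C})^d$. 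Crucially, the convective contribution enters only as lower-order $\L^2$ data; it does not alter the principal (Stokes) part of the operator, so the regularity exponent $s>0$ produced below is exactly the Stokes exponent governed by the geometry of $\O$ and is unaffected by $\boldsymbol{\beta}$.

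Next I would invoke the Stokes regularity results of \cite{MR975121,MR1600081}: there exists $s>0$, depending only on $\O$, such that the Stokes solution with datum $\boldsymbol{g}\in\L^2$ satisfies $\widehat{\bu}\in\H^{1+s}(\O,\mathbb{C})^d$ and $\widehat{p}\in\H^s(\O,\mathbb{C})$, together with an a priori bound $\|\widehat{\bu}\|_{1+s,\O}+\|\widehat{p}\|_{s,\O}\le C_{\mathrm{reg}}\|\boldsymbol{g}\|_{0,\O}$. It then remains to re-express the right-hand side in terms of $\boldsymbol{f}$ alone. Using the velocity estimate \eqref{eq:estimatefuente_velocity}, namely $\|\nabla\widehat{\bu}\|_{0,\O}\le (C_p/\nu)\|\boldsymbol{f}\|_{0,\O}$, I would bound
\begin{equation*}
\|\boldsymbol{g}\|_{0,\O}\le\|\boldsymbol{f}\|_{0,\O}+\|\boldsymbol{\beta}\|_{\infty,\O}\,\|\nabla\widehat{\bu}\|_{0,\O}\le\left(1+\frac{C_p\|\boldsymbol{\beta}\|_{\infty,\O}}{\nu}\right)\|\boldsymbol{f}\|_{0,\O},
\end{equation*}
and then absorb the sum into a maximum via $1+x\le 2\max\{1,x\}$ to reach the structure of the stated constant. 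The regularity of the generalized eigenfunctions of $\bT$ then follows at once, since each eigenfunction satisfies $\bu=\lambda\bT\bu$ with $\bu\in\L^2(\O,\mathbb{C})^d$ and therefore inherits the $\H^{1+s}$ regularity of the range of $\bT$.

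The conceptual steps are thus routine; the only genuinely delicate part is the bookkeeping of the constant. Matching the Stokes regularity constant $C_{\mathrm{reg}}$ with the factor $C_p/\beta$ appearing in the statement requires tracing precisely which form of the Stokes a priori estimate is used (in particular how the inf-sup constant $\beta$ and the Poincar\'e--Friedrichs constant $C_p$ enter the pressure bound) and how the viscosity scaling is distributed between the velocity and pressure terms. I expect this constant-tracking, rather than any analytic difficulty, to be the main thing to get right, since the regularity gain itself is inherited directly from the Stokes problem once $\boldsymbol{g}$ is shown to lie in $\L^2(\O,\mathbb{C})^d$.
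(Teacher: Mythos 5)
Your proposal is correct and coincides with the paper's own (only sketched) argument: the paper proves this theorem purely by citation, moving the convective term $(\boldsymbol{\beta}\cdot\nabla)\widehat{\bu}$ to the right-hand side --- which lies in $\L^2(\O,\mathbb{C})^d$ by the a priori bound \eqref{eq:estimatefuente_velocity} --- and invoking the classical Stokes regularity theory of \cite{MR975121,MR1600081}, exactly as you do. Your write-up in fact supplies more detail than the paper itself, including the constant bookkeeping that the paper leaves implicit.
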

Hence, because of the compact inclusion $\H^{1+s}(\O,\mathbb{C})^d\hookrightarrow \L^2(\O,\mathbb{C})^d$, we can
conclude that $\bT$ is a compact operator and we obtain the following  spectral characterization of $\bT$ holds.
\begin{lemma}(Spectral Characterization of $\bT$).
The spectrum of $\bT$ is such that $\sp(\bT)=\{0\}\cup\{\kappa_{k}\}_{k\in{N}}$ where $\{\kappa_{k}\}_{k\in\mathbf{N}}$ is a sequence of complex eigenvalues that converge to zero, according to their respective multiplicities. 
\end{lemma}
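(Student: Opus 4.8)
The statement asserts that the spectrum of the solution operator $\bT$ decomposes as $\{0\}$ together with a sequence of nonzero complex eigenvalues accumulating only at zero, each of finite multiplicity. My plan is to invoke the classical spectral theory of compact operators (the Riesz–Schauder / Fredholm theory) once I have established that $\bT$, viewed as an operator on a suitable Hilbert space, is compact. The essential input has, in fact, already been assembled in the excerpt: Theorem~\ref{th:regularidadfuente} provides the additional regularity $\widehat{\bu}\in\H^{1+s}(\O,\mathbb{C})^d$ for $s>0$ together with the stability bound, and the text immediately preceding the lemma records the compact Sobolev embedding $\H^{1+s}(\O,\mathbb{C})^d\hookrightarrow\L^2(\O,\mathbb{C})^d$.

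First I would fix the functional setting. The natural space on which to study the spectrum is $\L^2(\O,\mathbb{C})^d$: although $\bT$ is defined as a map $\L^2(\O,\mathbb{C})^d\to\H^1(\O,\mathbb{C})^d$, for spectral purposes I compose it with the inclusion $\H^1(\O,\mathbb{C})^d\hookrightarrow\L^2(\O,\mathbb{C})^d$ so that $\bT$ becomes an endomorphism of $\L^2(\O,\mathbb{C})^d$. This is the setting in which the eigenvalue equation $\bT\bu=\kappa\bu$ makes sense and in which one identifies eigenpairs of $\bT$ with solutions of \eqref{def:oseen_system_weak} via $\kappa=1/\lambda$, exactly as stated just above the lemma.

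Next I would establish compactness. Given $\boldsymbol{f}\in\L^2(\O,\mathbb{C})^d$, Theorem~\ref{th:regularidadfuente} yields $\bT\boldsymbol{f}=\widehat{\bu}\in\H^{1+s}(\O,\mathbb{C})^d$ with $\|\widehat{\bu}\|_{1+s,\O}\le C\|\boldsymbol{f}\|_{0,\O}$, so $\bT$ is a bounded operator from $\L^2(\O,\mathbb{C})^d$ into $\H^{1+s}(\O,\mathbb{C})^d$. Since the embedding $\H^{1+s}(\O,\mathbb{C})^d\hookrightarrow\L^2(\O,\mathbb{C})^d$ is compact (the Rellich–Kondrachov theorem, as $s>0$ and $\O$ is bounded with Lipschitz boundary), the composition $\bT:\L^2(\O,\mathbb{C})^d\to\L^2(\O,\mathbb{C})^d$ is compact, being the composition of a bounded map with a compact one.

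Finally I would apply the spectral theorem for compact operators on a complex Hilbert space: the spectrum of a compact operator is a countable set whose only possible accumulation point is $0$; every nonzero spectral value is an eigenvalue of finite algebraic multiplicity; and when the space is infinite dimensional, $0$ necessarily belongs to the spectrum. This gives precisely the claimed structure $\sp(\bT)=\{0\}\cup\{\kappa_k\}_{k\in N}$ with $\kappa_k\to 0$ and finite multiplicities. I do not expect a genuine obstacle here, since the hard analytic work---the regularity estimate---is already quoted in Theorem~\ref{th:regularidadfuente}; the only point requiring a little care is the passage from $\bT$ as a smoothing map into $\H^{1+s}$ to $\bT$ as a self-map of $\L^2$, so that the compact-operator theory applies to an endomorphism of a single Hilbert space. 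One should also note that compactness and the spectral structure are insensitive to the non-symmetry of $a(\cdot,\cdot)$: the argument makes no use of self-adjointness, which is why the eigenvalues $\kappa_k$ are genuinely complex in general.
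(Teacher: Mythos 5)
Your proposal is correct and follows exactly the argument the paper itself uses: compactness of $\bT$ via the regularity estimate of Theorem~\ref{th:regularidadfuente} combined with the compact embedding $\H^{1+s}(\O,\mathbb{C})^d\hookrightarrow\L^2(\O,\mathbb{C})^d$, followed by the Riesz--Schauder spectral theory of compact operators on a complex Hilbert space. The paper presents the lemma as an immediate consequence of this compactness without writing out the details, so your more careful treatment (in particular noting the composition with the inclusion so that $\bT$ becomes an endomorphism of $\L^2(\O,\mathbb{C})^d$) matches and slightly elaborates the intended proof.
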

\subsection{Relation between the Oseen and the Stokes eigenvalue problems}
From \eqref{def:oseen-eigenvalue} we observe that if $\boldsymbol{\beta}=\boldsymbol{0}$ we obtain the Stokes eigenvalue problem. Hence, a natural question to answer is related to the relation between the spectrums of the Oseen and the Stokes eigenvalue problems. More precisely, the convergence of the solution operator \eqref{eq:operador_solucion_u} and the solution operator associated to the Stokes spectral problem. Let us begin by introducing the Stokes spectral problem: Find $\lambda^S\in\mathbb{R}$ and  $(\boldsymbol{0},0)\neq(\bu^S,p^S)\in\mathcal{X}$ such that 
\begin{equation}\label{def:stokes_system_weak}
	\left\{
	\begin{array}{rcll}
a^S(\bu^S,\bv) + b^S(\bv,p^S)&=&\lambda^S(\bu^S,\bv)&\forall \bv\in \H_0^1(\O,\mathbb{C})^d,\\
b^S(\bu^S,q)&=&0&\forall q\in \L_0^2(\O,\mathbb{C}),
\end{array}
	\right.
\end{equation}
where
\begin{equation*}
a^S(\bu^S,\bv):=\int_{\Omega}\nu\nabla\bu^S :\nabla\bv\quad
\text{and}
\quad
b^S(\bv,p^S):=-\int_{\O}p^S\,\div\bv.
\end{equation*}
Let us  denote by $\bT^S$ the solution operator associated to \eqref{def:stokes_system_weak},  defined by
\begin{equation}\label{eq:operador_solucion_u_stokes}
	\bT^S:\L^2(\O,\mathbb{C})^d\rightarrow \H_0^1(\O,\mathbb{C})^d,\qquad \boldsymbol{f}\mapsto \bT^S\boldsymbol{f}:=\widehat{\bu}^S, 
\end{equation}
where the pair  $(\widehat{\bu}^S, \widehat{p}^S)\in\mathcal{X}$ is the solution of the following well posed source problem
\begin{equation}\label{def:stokes_system_weak_source}
	\left\{
	\begin{array}{rcll}
a^S(\widehat{\bu}^S, \bv)+b^S(\bv,\widehat{p}^S)&=&(\boldsymbol{f},\bv)&\forall \bv\in \H_0^1(\O,\mathbb{C})^d,\\
b^S(\widehat{\bu}^S,q)&=&0&\forall q\in \L_0^2(\O,\mathbb{C}),
\end{array}
	\right.
\end{equation}

The following result establish that the operator  $\bT$ converge to  $\bT^S$ as $\|\boldsymbol{\beta}\|_{\infty,\O}$ tends to zero.
\begin{lemma}
Let $\bT$ be the operator defined in \eqref{eq:operador_solucion_u} and $\bT^S$ the operator defined in \eqref{eq:operador_solucion_u_stokes}. Then, for any $\boldsymbol{f}\in\L^2(\O,\mathbb{C})^d$ the following estimate holds
\begin{equation*}
\|(\bT-\bT^S)\boldsymbol{f}\|_{1,\O}\leq C_\nu\|\boldsymbol{\beta}\|_{\infty,\O}\|\boldsymbol{f}\|_{0,\O}.
\end{equation*} 
\end{lemma}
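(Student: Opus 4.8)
The plan is to compare the two well-posed source problems directly. Given $\boldsymbol{f}\in\L^2(\O,\mathbb{C})^d$, I would write $(\widehat{\bu},\widehat{p})\in\mathcal{X}$ for the Oseen solution of \eqref{def:oseen_system_weak_source} (so that $\widehat{\bu}=\bT\boldsymbol{f}$) and $(\widehat{\bu}^S,\widehat{p}^S)\in\mathcal{X}$ for the Stokes solution of \eqref{def:stokes_system_weak_source} (so that $\widehat{\bu}^S=\bT^S\boldsymbol{f}$), and set $\boldsymbol{e}:=\widehat{\bu}-\widehat{\bu}^S$ and $e_p:=\widehat{p}-\widehat{p}^S$. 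Since the two formulations share the same form $b=b^S$ and the Oseen form differs from the Stokes one only by the convective term, $a(\bu,\bv)=a^S(\bu,\bv)+\int_{\O}(\boldsymbol{\beta}\cdot\nabla)\bu\cdot\bv$, subtracting \eqref{def:stokes_system_weak_source} from \eqref{def:oseen_system_weak_source} yields the error system
\begin{align*}
a^S(\boldsymbol{e},\bv)+b(\bv,e_p)&=-\int_{\O}(\boldsymbol{\beta}\cdot\nabla)\widehat{\bu}\cdot\bv\quad\forall\bv\in\H_0^1(\O,\mathbb{C})^d,\\
b(\boldsymbol{e},q)&=0\quad\forall q\in\L_0^2(\O,\mathbb{C}).
\end{align*}

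The key observation is that the second identity places $\boldsymbol{e}$ in the kernel $\mathcal{K}$, so testing the first equation with $\bv=\boldsymbol{e}$ annihilates the pressure term $b(\boldsymbol{e},e_p)=0$ and reduces everything to the coercivity of $a^S$ on $\mathcal{K}$. With the sesquilinear convention of the paper the diagonal value $a^S(\boldsymbol{e},\boldsymbol{e})=\nu\|\nabla\boldsymbol{e}\|_{0,\O}^2$ is real and nonnegative, whence
\begin{equation*}
\nu\|\nabla\boldsymbol{e}\|_{0,\O}^2=\left|\int_{\O}(\boldsymbol{\beta}\cdot\nabla)\widehat{\bu}\cdot\boldsymbol{e}\right|\leq\|\boldsymbol{\beta}\|_{\infty,\O}\,\|\nabla\widehat{\bu}\|_{0,\O}\,\|\boldsymbol{e}\|_{0,\O}.
\end{equation*}
Applying the Poincar\'e--Friedrichs inequality $\|\boldsymbol{e}\|_{0,\O}\leq C_p\|\nabla\boldsymbol{e}\|_{0,\O}$ and cancelling one factor of $\|\nabla\boldsymbol{e}\|_{0,\O}$ leaves $\nu\|\nabla\boldsymbol{e}\|_{0,\O}\leq C_p\|\boldsymbol{\beta}\|_{\infty,\O}\|\nabla\widehat{\bu}\|_{0,\O}$. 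I would then insert the a priori bound \eqref{eq:estimatefuente_velocity} for the Oseen velocity, $\|\nabla\widehat{\bu}\|_{0,\O}\leq (C_p/\nu)\|\boldsymbol{f}\|_{0,\O}$, and finish with the norm equivalence $\|\boldsymbol{e}\|_{1,\O}\leq C\|\nabla\boldsymbol{e}\|_{0,\O}$ valid on $\H_0^1(\O,\mathbb{C})^d$, obtaining $\|(\bT-\bT^S)\boldsymbol{f}\|_{1,\O}\leq C_\nu\|\boldsymbol{\beta}\|_{\infty,\O}\|\boldsymbol{f}\|_{0,\O}$ with a constant $C_\nu$ of order $C_p^2/\nu^2$.

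There is no serious obstacle in this argument; it is essentially a perturbation estimate. The two points that require care are, first, that testing with $\boldsymbol{e}\in\mathcal{K}$ is exactly what lets me avoid the full Babu\v{s}ka--Brezzi machinery and, in particular, never estimate the pressure error $e_p$; and second, that the skew-symmetry property \eqref{eq:skew} does \emph{not} apply to the convective term appearing here, because the two arguments of $\int_{\O}(\boldsymbol{\beta}\cdot\nabla)(\cdot)\cdot(\cdot)$ are the distinct functions $\widehat{\bu}$ and $\boldsymbol{e}$ rather than a single repeated argument; consequently it must be bounded in modulus using $\|\boldsymbol{\beta}\|_{\infty,\O}$, which is precisely where the claimed dependence on $\|\boldsymbol{\beta}\|_{\infty,\O}$ enters. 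One should also keep the complex/sesquilinear structure in mind so that $a^S(\boldsymbol{e},\boldsymbol{e})$ is genuinely real.
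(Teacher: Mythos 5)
Your proposal is correct and follows essentially the same route as the paper's proof: subtract the Stokes source problem from the Oseen one, test with the velocity error (whose membership in the kernel kills the pressure term, which is exactly the paper's use of the incompressibility conditions), bound the convective term by $\|\boldsymbol{\beta}\|_{\infty,\O}$, apply the Poincar\'e inequality to cancel one gradient factor, and conclude with the a priori bound \eqref{eq:estimatefuente_velocity}. Your explicit tracking of the constant as order $C_p^2/\nu^2$ and your remark that the skew-symmetry property \eqref{eq:skew} does not apply here are slightly more careful than the paper's write-up, but they do not constitute a different argument.
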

\begin{proof}
Let $\boldsymbol{f}\in\L^2(\O,\mathbb{C})^d$. Subtracting \eqref{def:stokes_system_weak_source} from \eqref{def:oseen_system_weak_source} we obtain
\begin{align*}
\displaystyle\int_{\O}\nu\nabla(\widehat{\bu}-\widehat{\bu}^S):\nabla\bv+\int_{\O}(\boldsymbol{\beta}\cdot\nabla)\widehat{\bu}\cdot\bv-\int_{\O}(\widehat{p}-\widehat{p}^S)\div\bv&=0\quad\forall \bv\in \H_0^1(\O,\mathbb{C})^d\\
-\int_{\O}\div(\widehat{\bu}-\widehat{\bu}^S)q&=0\quad\forall q\in \L_0^2(\O,\mathbb{C}).
\end{align*}
Testing the above system with $\bv=\widehat{\bu}-\widehat{\bu}^S$ and $q=\widehat{p}-\widehat{p}^S$, and using the incompressibility conditions that the velocity fields of the Oseen and Stokes source problem satisfy, we obtain 
\begin{equation}
\label{eq:bound1}
\nu\|\nabla(\widehat{\bu}-\widehat{\bu}^S)\|_{0,\O}^2\leq \|\boldsymbol{\beta}\|_{\infty,\O}\|\nabla\widehat{\bu}\|_{0,\O}\|\widehat{\bu}-\widehat{\bu}^S\|_{0,\O}.
\end{equation}

On the other hand, from Poincar\'e inequality there exists a constant $C_p>0$ such that $\|\widehat{\bu}-\widehat{\bu}^S\|_{0,\O}\leq C_p\|\nabla(\widehat{\bu}-\widehat{\bu}^S)\|_{0,\O}$. Replacing this in \eqref{eq:bound1} we obtain 
\begin{equation*}
\label{eq:bound2}
\|\nabla(\widehat{\bu}-\widehat{\bu}^S)\|_{0,\O}\leq C_p\frac{\|\boldsymbol{\beta}\|_{\infty,\O}}{\nu}\|\nabla\widehat{\bu}\|_{0,\O}\leq C_p \frac{\|\boldsymbol{\beta}\|_{\infty,\O}}{\nu}\|\boldsymbol{f}\|_{0,\O},
\end{equation*}
where the last estimate is consequence of \eqref{eq:estimatefuente_velocity}. This concludes the proof.
\end{proof}
We conclude this section by redefining the spectral problem \eqref{def:oseen_system_weak} in order to  simplify the notations for the forthcoming analysis. With this in mind, let us introduce the sesquilinear form $A:\mathcal{X}\times\mathcal{X}\rightarrow\mathbb{C}$ defined by 
\begin{equation*}
A((\bu,p);(\bv,q)):= a(\bu,\bv)+ b(\bv,p)- b(\bu,q),\quad\forall (\bv,q)\in\mathcal{X}, 
\end{equation*}
which allows us to rewrite problem \eqref{def:oseen_system_weak} as follows: Find $\lambda\in\mathbb{C}$ and $(\boldsymbol{0},0)\neq(\bu,p)\in\mathcal{X}$ such that
\begin{equation}
\label{eq:eigenA}
A((\bu,p);(\bv,q))=\lambda (\bu,\bv)_{0,\O}\quad\forall (\bv,q)\in\mathcal{X}.
\end{equation}

Since a part of $a(\cdot,\cdot)$ is elliptic,   $b(\cdot,\cdot)$ satisfies an inf-sup condition and $\boldsymbol{\beta}$ is  divergence free, we prove the following stability result holds which will be useful for the a posteriori error analysis.
	\begin{lemma}\label{lem:inf-sup-A}
		The sesquilinear form $A(\cdot,\cdot)$ satisfies the following inf-sup conditions
		\begin{align*}
			\nonumber &\ds\inf_{(\boldsymbol{0},0)\neq(\bw,r)\in\mathcal{X}}\sup_{(\boldsymbol{0},0)\neq(\bv,q)\in\mathcal{X}}\frac{A((\bv,q);(\bw,r))}{\|(\bv,q)\|\|(\bw,r)\|}=\gamma,\\
			&\inf_{(\boldsymbol{0},0)\neq(\bv,q)\in\mathcal{X}}\sup_{(\boldsymbol{0},0)\neq(\bw,r)\in\mathcal{X}}\frac{A((\bv,q);(\bw,r))}{\|(\bv,q)\|\|(\bw,r)\|}=\gamma,
			\label{eq:beta_infsup}
		\end{align*}
		where $\gamma$ is a positive  constant,  uniform with respect to $\nu$. Consequently, given $(\bv,q)\in\mathcal{X}$, there exists a pair  $(\bw,r)\in\mathcal{X}$ and a constant $C>0$ such that
			\begin{equation}
				\label{lem:stability1}
				\begin{aligned}
					&\Vert \bw\Vert_{1,\O} + \Vert r\Vert_{0,\O}\leq C,\\
					&\Vert \bv\Vert_{1,\O} + \Vert q \Vert_{0,\O} \leq A((\bv,q);(\bw,r)).
				\end{aligned}
			\end{equation}
	\end{lemma}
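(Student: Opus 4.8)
The plan is to establish the two inf-sup conditions by the classical Babu\v{s}ka--Brezzi construction of an explicit supremizer, and then to read off \eqref{lem:stability1} as a rescaled restatement of the second one. I would lean on two facts already at hand. First, the skew-symmetry \eqref{eq:skew} of the convective term ensures that $(\boldsymbol{\beta}\cdot\nabla)\bv$ contributes nothing to the real part of the diagonal of $a$, so that $\mathrm{Re}\,a(\bv,\bv)=\nu\|\nabla\bv\|_{0,\O}^2$ for \emph{every} $\bv\in\H_0^1(\O,\mathbb{C})^d$ (not merely on $\mathcal{K}$); this is the coercivity engine. Second, the continuous inf-sup condition \eqref{ec:inf-sup_cont} for $b$ lets me pick, for any $q\in\L_0^2(\O,\mathbb{C})$, a velocity $\boldsymbol{z}\in\H_0^1(\O,\mathbb{C})^d$ with $b(\boldsymbol{z},q)=\|q\|_{0,\O}^2$ and $\|\boldsymbol{z}\|_{1,\O}\leq\beta^{-1}\|q\|_{0,\O}$.

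For the second inf-sup (fixed trial $(\bv,q)$, supremize over the test), I would take $(\bw,r):=(\bv+\delta\boldsymbol{z},\,q)$ with $\delta>0$ to be fixed and $\boldsymbol{z}$ the supremizer of $q$ above. Using the definition of $A$ and the cancellation of the two terms $\pm b(\bv,q)$, a direct computation gives
\[
A((\bv,q);(\bw,r))=a(\bv,\bv)+\delta\,a(\bv,\boldsymbol{z})+\delta\|q\|_{0,\O}^2 .
\]
Taking real parts, the only term without a sign is $\delta\,\mathrm{Re}\,a(\bv,\boldsymbol{z})$, which I would absorb by Young's inequality after the bound $|a(\bv,\boldsymbol{z})|\leq C\|\bv\|_{1,\O}\|\boldsymbol{z}\|_{1,\O}\leq C\beta^{-1}\|\bv\|_{1,\O}\|q\|_{0,\O}$ (with $C$ depending on $\nu^{+}$ and $\|\boldsymbol{\beta}\|_{\infty,\O}$). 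Choosing $\delta$ small enough, proportional to $\nu$, then yields $\mathrm{Re}\,A((\bv,q);(\bw,r))\geq c\,(\|\bv\|_{1,\O}^2+\|q\|_{0,\O}^2)$, while by construction $\|(\bw,r)\|\leq C'\|(\bv,q)\|$; dividing produces the inf-sup with a constant $\gamma$ that remains bounded below because $\nu^{-}\leq\nu\leq\nu^{+}$.

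The first inf-sup (now $(\bw,r)$ fixed) is a genuinely separate statement, since $A$ is non-symmetric, but it follows from the mirror-image construction $(\bv,q):=(\bw+\delta\boldsymbol{z}',\,r)$ with $\boldsymbol{z}'$ chosen so that $b(\boldsymbol{z}',r)=-\|r\|_{0,\O}^2$. The same cancellation leaves $A((\bv,q);(\bw,r))=a(\bw,\bw)+\delta\,a(\boldsymbol{z}',\bw)+\delta\|r\|_{0,\O}^2$, and since $\mathrm{Re}\,a(\bw,\bw)=\nu\|\nabla\bw\|_{0,\O}^2$ and $|a(\boldsymbol{z}',\bw)|\leq C\|\boldsymbol{z}'\|_{1,\O}\|\bw\|_{1,\O}$ exactly as before, the identical argument delivers the same $\gamma$. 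Finally, \eqref{lem:stability1} is obtained by normalizing: given $(\bv,q)$, pick a supremizer $(\bw_\ast,r_\ast)$ with $\|(\bw_\ast,r_\ast)\|=1$ realizing the second inf-sup, and rescale to $(\bw,r)=\tfrac{2}{\gamma}(\bw_\ast,r_\ast)$; sesquilinearity then makes $\|\bw\|_{1,\O}+\|r\|_{0,\O}$ bounded by a fixed constant while $\mathrm{Re}\,A((\bv,q);(\bw,r))\geq\|\bv\|_{1,\O}+\|q\|_{0,\O}$. The main obstacle I anticipate is exactly the non-symmetry introduced by convection: the delicate points are confirming that the convective term really drops out of $\mathrm{Re}\,a(\bv,\bv)$ for complex-valued fields, and tuning $\delta$ together with the Young split so that $\gamma$ is controlled uniformly over $\nu\in[\nu^{-},\nu^{+}]$.
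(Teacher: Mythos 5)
Your proof is correct, but it follows a genuinely different route from the paper. You build an explicit supremizer: for fixed $(\bv,q)$ you test with $(\bw,r)=(\bv+\delta\boldsymbol{z},q)$, where $\boldsymbol{z}$ inverts the divergence on $q$, so that the $b$-terms collapse and $\mathrm{Re}\,A((\bv,q);(\bw,r))=\nu\|\nabla\bv\|_{0,\O}^2+\delta\,\mathrm{Re}\,a(\bv,\boldsymbol{z})+\delta\|q\|_{0,\O}^2$ can be made coercive by tuning $\delta$ via Young's inequality. The paper instead never assembles a combined test function: it fixes $(\bv,q)$, abbreviates the supremum as $\mathbb{S}$, and bounds the two components of the norm \emph{separately} against $\mathbb{S}$ --- first $\nu C_p\|\bv\|_{1,\O}^2\leq A((\bv,q);(\bv,q))\leq\mathbb{S}\|(\bv,q)\|$ by the skew-symmetry \eqref{eq:skew}, then $\|q\|_{0,\O}^2$ by testing with $(\widetilde{\bv},0)$, where $\div\widetilde{\bv}=-q$, writing $\|q\|_{0,\O}^2=-A((\bv,q);(\widetilde{\bv},0))+a(\bv,\widetilde{\bv})$ and absorbing $a(\bv,\widetilde{\bv})$ through the first bound and Young's inequality; combining the two yields $\gamma\|(\bv,q)\|\leq\mathbb{S}$ with an explicit formula for $\gamma$ in terms of $\nu$, $C_p$ and $\|\boldsymbol{\beta}\|_{\infty,\O}$. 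Both arguments rest on the same two ingredients (whole-space coercivity of the real diagonal of $a$, surjectivity of the divergence from \eqref{ec:inf-sup_cont}), so they are morally equivalent, but each buys something: the paper's split-estimate route tracks the constant $\gamma$ explicitly, while your construction proves the estimate in one shot and --- a genuine advantage --- handles the \emph{first} inf-sup condition by the mirror choice $(\bv,q)=(\bw+\delta\boldsymbol{z}',r)$, whereas the paper only details the second and leaves the transposed condition implicit. Two cosmetic points: your final rescaling should carry the norm-equivalence factor $\sqrt{2}$ (as the paper notes at the end of its proof), e.g.\ $(\bw,r)=\tfrac{2\sqrt{2}}{\gamma}(\bw_\ast,r_\ast)$ with a near-supremizer since the supremum need not be attained; and your care about $\mathrm{Re}\,a(\bv,\bv)$ for complex-valued fields is warranted --- the cancellation \eqref{eq:skew} kills only the real part of the convective diagonal in general, a subtlety the paper itself glosses over by writing $a(\bv,\bv)=A((\bv,q);(\bv,q))$ without taking real parts, so your version is if anything the more careful one.
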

	
	\begin{proof}
		Let $(\boldsymbol{0},0)\neq(\bv,q)\in\mathcal{X}$.  With this pair at hand we define
		\begin{equation*}
		\mathbb{S}:=\sup_{(\boldsymbol{0},0)\neq(\bw,r)\in\mathcal{X}}\frac{A((\bv,q);(\bw,r))}{\|(\bw,r)\|}.
		\end{equation*}
	Since $\boldsymbol{\beta}$ is divergence free, we have that $(\boldsymbol{\beta}\cdot\nabla)\bv,\bv)_{0,\O} =0$, which together with the Poincar\'e inequality allows us  to have
		\begin{equation}
			\label{lem:eq001}
			C_p\nu\Vert\bv\Vert_{1,\O}^2\leq a(\bv,\bv)= A((\bv,q);(\bv,q)) \leq \mathbb{S} \Vert (\bv,q)\Vert.
		\end{equation}
		In turn, we have that $b(\cdot,\cdot)$ satisfies an inf-sup condition. Hence, there exists $\widetilde{\bv}\in \H_0^1(\O,\mathbb{C})^d$ such that $\div \widetilde{\bv} =-q$ together with a constant $C>0$ satisfying  $\Vert\widetilde{\bv}\Vert_{1,\O}\leq C\Vert q\Vert_{0,\O}$.  From this we obtain
		$$
			\begin{aligned}
			\Vert q\Vert_{0,\O}^2 = - \int_{\O} q\div &\widetilde{\bv}  = - \int_{\O} q\, \div \widetilde{\bv}  - a(\bv,\widetilde{\bv}) + a(\bv,\widetilde{\bv})\\
			&= - A((\bv,q);(\widetilde{\bv},0)) + a(\bv,\widetilde{\bv}) \\
			&\leq \mathbb{S}\Vert\widetilde{\bv}\Vert_{1,\O} +\nu\Vert\bv\Vert_{1,\O}\Vert\widetilde{\bv}\Vert_{1,\O} + \Vert \boldsymbol{\beta}\Vert_{\infty,\Omega} \Vert \bv\Vert_{1,\O} \Vert\widetilde{\bv}\Vert_{1,\O}\\
			&\leq  \mathbb{S}\Vert\widetilde{\bv}\Vert_{1,\O} + \mathbb{S}^{1/2}C_p^{-1/2}\left(\nu^{1/2} + \nu^{-1/2}\Vert\boldsymbol{\beta}\Vert_{\infty,\Omega}\right)\Vert (\bv,q)\Vert^{1/2}\Vert\widetilde{\bv}\Vert_{1,\O},
			\end{aligned}
		$$
		where we have used the continuity of $a(\cdot,\cdot)$ and \eqref{lem:eq001}. Exploiting the bound for $\widetilde{\bv}$ and using Young's inequality we obtain
		\begin{equation}
			\label{lem:eq003}
			\Vert q\Vert_{0,\O}^2  \leq 2C \left[\mathbb{S}^2 + \mathbb{S}C_p(\nu+\nu^{-1}\Vert \boldsymbol{\beta}\Vert_{\infty,\Omega}) ^2\Vert (\bv,q)\Vert\right].
		\end{equation}
		Gathering \eqref{lem:eq001} and \eqref{lem:eq003} allows to have
		$$
		\Vert (\bv,q)\Vert^2 \leq \nu^{-1}C_p^{-1}\mathbb{S} \Vert (\bv,q)\Vert + 2C \left[\mathbb{S}^2 + \mathbb{S}C_p(\nu+\nu^{-1}\Vert \boldsymbol{\beta}\Vert_{\infty,\Omega}) ^2\Vert (\bv,q)\Vert\right].
		$$
		Using now the following version of the Young's inequality $ab\leq \frac{a^2}{4} + b^2$  we obtain
		$$
		\Vert (\bv,q)\Vert^2 \leq \nu^{-2}C_p^{-2}\mathbb{S}^2 + \frac{1}{4}\Vert (\bv,q)\Vert^2 + 2C\mathbb{S}^2 +2C \mathbb{S}^2C_p^2(\nu+\nu^{-1}\Vert \boldsymbol{\beta}\Vert_{\infty,\Omega}) ^4 + \frac{1}{4}\Vert (\bv,q)\Vert^2.
		$$
		The resulting bound is then
		$$
		\gamma \Vert (\bv,q)\Vert \leq \mathbb{S},
		$$
		with
		$$
		\gamma :=\frac{1}{2\left\{(\nu)^{-2}C_p^{-2}+2C\left[1 + C_p^2(\nu + \nu^{-1}\Vert\boldsymbol{\beta}\Vert_{\infty,\Omega})^4\right]\right\}^{1/2}}.
		$$
		Finally the bounds \eqref{lem:stability1} follows by noting that 
		$$
		\frac{1}{\sqrt{2}}(\Vert \bv\Vert_{1,\O} + \Vert q \Vert_{0,\O})\leq (\Vert\bv\Vert_{1,\O}^2 + 	\Vert q\Vert_{0,\O}^2)^{1/2}.
		$$
		This concludes the proof.	\end{proof}


\subsection{The adjoint eigenvalue problem}
An important aspect of the Ossen eigenvalue problem is the lack of symmetry due to the presence of the convective term. This is an important fact that leads to a solution operator that is non-selfadjoint. Hence, the dual eigenvalue problem must be considered in order to have complete information of the convergence of the finite element approximation of the spectrum. 

The strong form of the dual equations are given as:
\begin{equation}\label{def:oseen-eigenvalue-dual}
	\left\{
	\begin{array}{rcll}
		-\nu\Delta \bu^* - \div(\bu^*\otimes\boldsymbol{\beta}) - \nabla p&=&\lambda^*\bu^*,&\text{in}\,\O,\\
		-\div \bu^*&=&0,&\text{in}\,\O,\\
		\displaystyle\int_{\O} p^* &=&0, &\text{in}\,\O,\\
		\bu^* &=&\boldsymbol{0},&\text{in}\,\partial\O,
	\end{array}
	\right.
\end{equation}

The dual weak variational formulation from \eqref{def:oseen-eigenvalue-dual} reads as follows: Find $\lambda^*\in\mathbb{C}$ and the pair $(\boldsymbol{0},0)\neq(\bu^*,p^*)\in\mathcal{X}'$ such that  
\begin{equation}\label{def:oseen_system_weak_dual_eigen}
	\left\{
	\begin{array}{rcll}
\widehat{a}(\bv,\bu^*)-b(p^*,\bv)&=&\lambda^*(\bv,\bu^*)&\forall \bv\in \H^{-1}(\O,\mathbb{C})^d,\\
-b(q, \bu^*)&=&0&\forall q\in \L_0^2(\O,\mathbb{C}).
\end{array}
	\right.
\end{equation}
where $\widehat{a}(\bv,\bu^*)=\displaystyle \int_{\Omega}\nu\nabla\bv:\nabla \bu^*+\int_{\Omega}(\bu^* \otimes \boldsymbol{\beta}):\nabla \bv$. From \eqref{def:oseen_system_weak}, \eqref{def:oseen_system_weak_dual_eigen} and integration by parts we obtain
\begin{equation*}
a(\bu,\bu^*) - \widehat{a}(\bu,\bu^*)=\int_{\O}(\boldsymbol{\beta}\cdot\nabla)\bu\cdot\bu^*  -  \int_{\O}(\boldsymbol{\beta}\cdot\nabla)\bu\cdot\bu^* = 0,
\end{equation*}
hence, we readily find that $\lambda = \lambda^*$.

Now we introduce the adjoint of \eqref{eq:operador_solucion_u} defined  by 
\begin{equation}\label{eq:operador_adjunto_solucion_u}
	\bT^*:\L^{2}(\O,\mathbb{C})^d\rightarrow \H^{-1}(\O,\mathbb{C})^d,\qquad \boldsymbol{f}\mapsto \bT\boldsymbol{f}:=\widehat{\bu}^*, 
\end{equation} 
where $\widehat{\bu}^*\in\H^{-1}(\O,\mathbb{C})$ is the adjoint velocity of $\widehat{\bu}$ and solves the following adjoint source  problem: Find  
$(\widehat{\bu}^*, \widehat{p}^*)\in\mathcal{X}'$ such that 
\begin{equation}\label{def:oseen_system_weak_dual_source}
	\left\{
	\begin{array}{rcll}
\widehat{a}(\bv,\widehat{\bu}^*)-b(\widehat{p}^*,\bv)&=&(\bv,\boldsymbol{f})&\forall \bv\in \H^{-1}(\O,\mathbb{C})^d,\\
-b(q, \widehat{\bu}^*)&=&0&\forall q\in \L_0^2(\O,\mathbb{C}).
\end{array}
	\right.
\end{equation}
Similar to Theorem \ref{th:regularidadfuente}, we have that the dual source and eigenvalue problems are such that the following estimate holds.

\begin{lemma}
There exist $s^*>0$ such that for all $\boldsymbol{f} \in \L^2(\O,\mathbb{C})^d$, the solution $(\widehat{\bu}^*,\widehat{p}^*)$ of problem \eqref{def:oseen_system_weak_dual_source}, satisfies $\widehat{\bu}^*\in  \H^{1+s^*}(\Omega,\mathbb{C} )^d$ and $\widehat{p}^*\in \H^{s^*}(\Omega,\mathbb{C} )$, and
 \begin{equation*}
\|\widehat{\bu}^*\|_{1+s^*,\O}+\|\widehat{p}^*\|_{s^*,\O}\leq C \|\boldsymbol{f}\|_{0,\O},
\end{equation*}
where the constant $C$ is the same as in Theorem \ref{th:regularidadfuente}.
\end{lemma}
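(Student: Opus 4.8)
The plan is to mirror the proof of Theorem~\ref{th:regularidadfuente}. Well-posedness of the adjoint source problem \eqref{def:oseen_system_weak_dual_source} follows from Babu\v{s}ka--Brezzi theory applied to the transposed system, so a weak solution $(\widehat{\bu}^*,\widehat{p}^*)\in\mathcal{X}'$ exists with $\widehat{\bu}^*\in\H_0^1(\O,\mathbb{C})^d$ and $\widehat{p}^*\in\L_0^2(\O,\mathbb{C})$. The strategy is then to \emph{bootstrap} this baseline regularity: recast the dual equations as a pure Stokes system whose right-hand side is shown to lie in $\L^2(\O,\mathbb{C})^d$, and invoke the Stokes regularity results of \cite{MR975121,MR1600081}.

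First I would rewrite the strong form \eqref{def:oseen-eigenvalue-dual}, with the eigen-term replaced by the source $\boldsymbol{f}$, by moving the convective contribution into the data. This casts the dual source problem as the Stokes problem
\begin{equation*}
-\nu\Delta\widehat{\bu}^* - \nabla\widehat{p}^* = \widetilde{\boldsymbol{f}}, \qquad \div\widehat{\bu}^* = 0 \quad\text{in } \O, \qquad \widehat{\bu}^* = \boldsymbol{0} \quad\text{on } \partial\O,
\end{equation*}
with $\widetilde{\boldsymbol{f}} := \boldsymbol{f} + \div(\widehat{\bu}^*\otimes\boldsymbol{\beta})$. The crucial observation is that, since $\boldsymbol{\beta}$ is divergence free, one has $\div(\widehat{\bu}^*\otimes\boldsymbol{\beta}) = (\boldsymbol{\beta}\cdot\nabla)\widehat{\bu}^*$; together with $\boldsymbol{\beta}\in\mathrm{W}^{1,\infty}(\O,\mathbb{C})^d$ and $\widehat{\bu}^*\in\H_0^1(\O,\mathbb{C})^d$ this yields $\|\div(\widehat{\bu}^*\otimes\boldsymbol{\beta})\|_{0,\O}\leq\|\boldsymbol{\beta}\|_{\infty,\O}\|\nabla\widehat{\bu}^*\|_{0,\O}$, so that indeed $\widetilde{\boldsymbol{f}}\in\L^2(\O,\mathbb{C})^d$.

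Next I would establish the $\H^1$ a priori bound that is the dual analog of \eqref{eq:estimatefuente_velocity}. Testing \eqref{def:oseen_system_weak_dual_source} with $\bv=\widehat{\bu}^*$, the pressure term vanishes because $\widehat{\bu}^*$ is solenoidal, while the convective contribution $\int_{\O}(\widehat{\bu}^*\otimes\boldsymbol{\beta}):\nabla\widehat{\bu}^*$ has vanishing real part: after one integration by parts and using $\div\boldsymbol{\beta}=0$ it reduces to the skew-symmetric term controlled by \eqref{eq:skew}. This delivers $\nu\|\nabla\widehat{\bu}^*\|_{0,\O}\leq C_p\|\boldsymbol{f}\|_{0,\O}$. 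I would then apply the Stokes regularity estimate to the recast system: there exists $s^*>0$ (the Stokes regularity exponent of $\O$, which coincides with the $s$ of Theorem~\ref{th:regularidadfuente} since both problems reduce to the same Stokes operator) with $\|\widehat{\bu}^*\|_{1+s^*,\O}+\|\widehat{p}^*\|_{s^*,\O}\leq C_{\mathrm{reg}}\|\widetilde{\boldsymbol{f}}\|_{0,\O}$. Combining this with $\|\widetilde{\boldsymbol{f}}\|_{0,\O}\leq\|\boldsymbol{f}\|_{0,\O}+\|\boldsymbol{\beta}\|_{\infty,\O}\|\nabla\widehat{\bu}^*\|_{0,\O}$ and the energy bound above produces the stated estimate with the same constant $C=\frac{C_p}{\beta}\max\{1,C_p\|\boldsymbol{\beta}\|_{\infty,\O}/\nu\}$ as in the primal case.

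The main obstacle I anticipate is the bookkeeping of the convective term in its dual (transposed) form. One must verify carefully the identity $\div(\widehat{\bu}^*\otimes\boldsymbol{\beta}) = (\boldsymbol{\beta}\cdot\nabla)\widehat{\bu}^*$, paying attention to the complex conjugation built into the dyadic product $\otimes$, and confirm the dual skew-symmetry that makes the convective energy contribution vanish, since here the roles of the trial and test arguments in $\widehat{a}(\cdot,\cdot)$ are interchanged relative to the primal sesquilinear form $a(\cdot,\cdot)$. Once these two points are secured, the remaining estimates are entirely routine and parallel to Theorem~\ref{th:regularidadfuente}.
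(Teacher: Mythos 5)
Your proposal is correct and follows essentially the same route the paper intends: the paper gives no detailed proof (it merely states the result "similar to Theorem \ref{th:regularidadfuente}"), and that theorem itself rests on exactly your argument --- absorb the convective term, which lies in $\L^2(\O,\mathbb{C})^d$ by the energy bound $\nu\|\nabla\widehat{\bu}^*\|_{0,\O}\leq C_p\|\boldsymbol{f}\|_{0,\O}$, into the data and invoke the Stokes regularity results of \cite{MR975121,MR1600081}. Your two flagged verifications are sound as well, with the only cosmetic caveat that under the paper's conjugating dyadic product one gets $\div(\widehat{\bu}^*\otimes\boldsymbol{\beta})=(\overline{\boldsymbol{\beta}}\cdot\nabla)\widehat{\bu}^*$, which changes nothing in the norms.
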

Hence, with this regularity result at hand, the spectral characterization of $\bT^*$ is given as follows.
\begin{lemma}(Spectral Characterization of $\bT^*$).
The spectrum of $\bT^*$ is such that $\sp(\bT^*)=\{0\}\cup\{\kappa_{k}^*\}_{k\in{N}}$ where $\{\kappa_{k}*\}_{k\in\mathbf{N}}$ is a sequence of complex eigenvalues that converge to zero, according to their respective multiplicities. 
\end{lemma}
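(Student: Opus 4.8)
The plan is to mirror exactly the argument used for the spectral characterization of $\bT$, replacing the primal regularity result by its dual counterpart. The essential point is to show that $\bT^*$, viewed as an operator from $\L^2(\O,\mathbb{C})^d$ into itself, is compact; once this is established, the claimed description of $\sp(\bT^*)$ follows from the classical spectral theory of compact operators on a complex Hilbert space.

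First I would invoke the dual regularity lemma stated immediately above: for every $\boldsymbol{f}\in\L^2(\O,\mathbb{C})^d$ the adjoint velocity $\widehat{\bu}^*=\bT^*\boldsymbol{f}$ belongs to $\H^{1+s^*}(\O,\mathbb{C})^d$ and satisfies $\|\widehat{\bu}^*\|_{1+s^*,\O}\leq C\|\boldsymbol{f}\|_{0,\O}$. Thus $\bT^*$ factors as a bounded operator $\L^2(\O,\mathbb{C})^d\to\H^{1+s^*}(\O,\mathbb{C})^d$ followed by the inclusion into $\L^2(\O,\mathbb{C})^d$. Since $s^*>0$, the Rellich--Kondrachov theorem guarantees that this inclusion $\H^{1+s^*}(\O,\mathbb{C})^d\hookrightarrow\L^2(\O,\mathbb{C})^d$ is compact, and the composition of a bounded operator with a compact one is compact. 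Hence $\bT^*$ is a compact operator on $\L^2(\O,\mathbb{C})^d$.

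With compactness in hand, I would apply the spectral theorem for compact operators: the spectrum of $\bT^*$ is at most countable, its only possible accumulation point is $0$, and every nonzero spectral value is an eigenvalue of finite algebraic multiplicity. This yields precisely $\sp(\bT^*)=\{0\}\cup\{\kappa_k^*\}_{k\in\mathbf{N}}$ with $\kappa_k^*\to 0$, counted according to multiplicity. As an independent check (or an alternative shortcut), one may observe that $\bT^*$ is the Hilbert-space adjoint of the compact operator $\bT$, so that Schauder's theorem gives compactness directly and the nonzero part of $\sp(\bT^*)$ coincides with that of $\sp(\bT)$, consistent with the identity $\lambda=\lambda^*$ established above.

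The only genuine subtlety, and the step I would treat most carefully, is the mismatch of target spaces in the definition \eqref{eq:operador_adjunto_solucion_u}, where $\bT^*$ is nominally written with range in $\H^{-1}(\O,\mathbb{C})^d$. One must reconcile this with the dual regularity lemma, which places the range in $\H^{1+s^*}(\O,\mathbb{C})^d$, and confirm that $\bT^*$ is well defined and bounded as a self-map of $\L^2(\O,\mathbb{C})^d$ before the compact embedding can be used. This is exactly the analogue of the remark made for $\bT$ and requires no new ideas beyond careful bookkeeping of the functional-analytic setting.
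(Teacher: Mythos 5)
Your proposal is correct and follows essentially the same route as the paper, which (implicitly, in the sentence preceding the lemma) derives compactness of $\bT^*$ from the dual regularity estimate $\|\widehat{\bu}^*\|_{1+s^*,\O}\leq C\|\boldsymbol{f}\|_{0,\O}$ together with the compact inclusion $\H^{1+s^*}(\O,\mathbb{C})^d\hookrightarrow\L^2(\O,\mathbb{C})^d$, and then invokes the spectral theory of compact operators, exactly mirroring the argument given for $\bT$. Your added observations---the Schauder-theorem shortcut and the need to reconcile the $\H^{-1}(\O,\mathbb{C})^d$ codomain in \eqref{eq:operador_adjunto_solucion_u} (evidently a typographical slip in the paper) with the regularity lemma---are sound refinements rather than deviations.
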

It is easy to prove that if $\kappa$ is an eigenvalue of $\bT$ with multiplicity $m$, $\bar{\kappa}^*$ is an eigenvalue of $\bT^*$ with the same multiplicity $m$.

Let us define the sesquilinear form $\widehat{A}:\mathcal{X}'\times\mathcal{X}'\rightarrow\mathbb{C}$ by
\begin{equation*}
\widehat{A}((\bv,q);(\bu^*,p^*)):=\widehat{a}(\bv,\bu^*)-b(p^*,\bv)+b(q, \bu^*), 
\end{equation*}
which allows us to rewrite the dual eigenvalue problem  \eqref{def:oseen_system_weak_dual_eigen} as follows:  Find $\lambda^*\in\mathbb{C}$ and the pair $(\boldsymbol{0},0)\neq(\bu^*,p^*)\in\mathcal{X}'$ such that  
\begin{equation*}
\label{eq:eigen_A_dual}
\widehat{A}((\bv,q);(\bu^*,p^*))=\lambda^*(\bv,\bu^*)\quad\forall (\bv,q)\in\mathcal{X}'.
\end{equation*}

The dual counterpart of Lemma \ref{lem:inf-sup-A} is given below,  where we have that $\widehat{A}$ is stable.

\begin{lemma}\label{lem:inf-sup-A*}
	The form $\widehat{A}(\cdot,\cdot)$ satisfies the inf-sup conditions
	\begin{align*}
		\nonumber &\ds\inf_{(\boldsymbol{0},0)\neq(\bw,r)\in\mathcal{X}'}\sup_{(\boldsymbol{0},0)\neq(\bv,q)\in\mathcal{X}'}\frac{\widehat{A}((\bv,q);(\bw,r))}{\|(\bv,q)\|\|(\bw,r)\|}=\gamma^*,\\
		&\inf_{(\boldsymbol{0},0)\neq(\bv,q)\in\mathcal{X}'}\sup_{(\boldsymbol{0},0)\neq(\bw,r)\in\mathcal{X}'}\frac{\widehat{A}((\bv,q);(\bw,r))}{\|(\bv,q)\|\|(\bw,r)\|}=\gamma^*.
		\label{eq:beta_infsup*}
	\end{align*}
	where $\gamma^*$ is a positive  constant,  uniform with respect to $\nu$. Consequently, given $(\bv,q)\in\mathcal{X}'$, there exists $(\bw,r)\in\mathcal{X}'$ such that
	\begin{equation*}
		\label{lem:stability1*}
		\begin{aligned}
			&\Vert \bw\Vert_{1,\O} + \Vert r\Vert_{0,\O}\leq C,\\
			&\Vert \bv\Vert_{1,\O} + \Vert q \Vert_{0,\O} \leq \widehat{A}((\bv,q);(\bw,r)).
		\end{aligned}
	\end{equation*}
\end{lemma}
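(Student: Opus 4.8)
The plan is to follow the argument of Lemma~\ref{lem:inf-sup-A} almost verbatim, replacing $A$ by $\widehat{A}$ and using that $\widehat{a}(\cdot,\cdot)$ shares the coercivity and continuity properties of $a(\cdot,\cdot)$. Fix $(\boldsymbol{0},0)\neq(\bv,q)\in\mathcal{X}'$ and set
$$
\mathbb{S}^*:=\sup_{(\boldsymbol{0},0)\neq(\bw,r)\in\mathcal{X}'}\frac{\widehat{A}((\bv,q);(\bw,r))}{\|(\bw,r)\|}.
$$
First I would recover control of the velocity. Testing with $(\bw,r)=(\bv,q)$ makes the two mixed $b$-terms cancel, so $\widehat{A}((\bv,q);(\bv,q))=\widehat{a}(\bv,\bv)$. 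Using the identity $\widehat{a}(\bv,\bv)=a(\bv,\bv)$ already derived above, together with the skew-symmetry \eqref{eq:skew}, the adjoint convective contribution vanishes on the diagonal and one is left with $\widehat{a}(\bv,\bv)=\nu\|\nabla\bv\|_{0,\O}^2$. The Poincar\'e inequality then gives $C_p\nu\|\bv\|_{1,\O}^2\le\widehat{A}((\bv,q);(\bv,q))\le\mathbb{S}^*\|(\bv,q)\|$, the exact analogue of \eqref{lem:eq001}.

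Next I would recover the pressure. Since $b(\cdot,\cdot)$ is unchanged, its inf-sup condition \eqref{ec:inf-sup_cont} still supplies $\widetilde{\bv}\in\H_0^1(\O,\mathbb{C})^d$ whose divergence matches $q$ and with $\|\widetilde{\bv}\|_{1,\O}\le C\|q\|_{0,\O}$. Testing $\widehat{A}((\bv,q);(\widetilde{\bv},0))$ isolates the pressure term, which equals $\|q\|_{0,\O}^2$ up to sign, while the remaining term $\widehat{a}(\bv,\widetilde{\bv})$ is controlled by the continuity of $\widehat{a}$; its constant is again governed by $\nu$ and $\|\boldsymbol{\beta}\|_{\infty,\O}$, because $\left|\int_{\O}(\widetilde{\bv}\otimes\boldsymbol{\beta}):\nabla\bv\right|\le\|\boldsymbol{\beta}\|_{\infty,\O}\|\bv\|_{1,\O}\|\widetilde{\bv}\|_{1,\O}$. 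This reproduces the analogue of \eqref{lem:eq003}, and combining the velocity and pressure estimates and closing with the same two applications of Young's inequality yields $\gamma^*\|(\bv,q)\|\le\mathbb{S}^*$ with $\gamma^*$ of the same form as $\gamma$, uniform in $\nu$. This is the second inf-sup condition; the first follows by the identical argument applied to the transposed form (equivalently, by the symmetric roles of the two arguments of $\widehat{A}$), and the stated stability consequence is obtained exactly as at the end of Lemma~\ref{lem:inf-sup-A}.

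The main obstacle is not the saddle-point bookkeeping, which is mechanical, but verifying that $\widehat{a}(\cdot,\cdot)$ really inherits the diagonal coercivity $\widehat{a}(\bv,\bv)=\nu\|\nabla\bv\|_{0,\O}^2$. This amounts to the skew-symmetry of the adjoint convective term, $\int_{\O}(\bv\otimes\boldsymbol{\beta}):\nabla\bv=0$, which one must confirm from $\div\boldsymbol{\beta}=0$ and the boundary condition via integration by parts; here some care with the complex conjugation built into the sesquilinear products $\boldsymbol{A}:\boldsymbol{B}$ and $\bs\otimes\br$ is required. Once this identity and the matching continuity bound for $\widehat{a}$ are in place, the remainder of the proof is a transcription of Lemma~\ref{lem:inf-sup-A}.
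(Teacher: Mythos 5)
Your proposal is correct and coincides with the paper's intended argument: the paper states Lemma~\ref{lem:inf-sup-A*} without proof, presenting it as the direct dual counterpart of Lemma~\ref{lem:inf-sup-A}, and your proof is exactly that transcription (diagonal coercivity, pressure recovery via the inf-sup condition \eqref{ec:inf-sup_cont} for $b(\cdot,\cdot)$, and the same two Young-inequality closings). You also correctly isolate the only genuinely new verification, namely $\widehat{a}(\bv,\bv)=a(\bv,\bv)=\nu\|\nabla\bv\|_{0,\O}^2$ via integration by parts and the skew-symmetry \eqref{eq:skew}, which the paper has already recorded in the identity $a(\bu,\bu^*)=\widehat{a}(\bu,\bu^*)$ preceding the lemma.
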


In the forthcoming we will introduce the finite element discretization of \eqref{def:oseen_system_weak} and \eqref{def:oseen_system_weak_dual_source}, and hence, of $\bT$ and $\bT^*$.
\section{The finite element method}
\label{sec:fem}
In this section our aim is to describe a finite element discretization of problem \eqref{def:oseen_system_weak}. To do this task, we will introduce two families of inf-sup stable finite elements for the Oseen load problem whose properties also hold for the eigenvalue problem. We begin by introducing some preliminary definitions and notations to perform the analysis. 

\subsection{Inf--sup stable finite element spaces}
\label{sec:infsup_fem}

Let $\mathcal{T}_h=\{T\}$ be a conforming partition  of $\overline{\O}$ into closed simplices $T$ with size $h_T=\text{diam}(T)$. Define $h:=\max_{T\in\mathcal{T}_h}h_T$. Given a mesh $\mathcal{T}_h\in\mathbb{T}$, we denote by $\boldsymbol{V}_h$ and $\mathcal{P}_h$ the finite element spaces that approximate the velocity field and the pressure, respectively. In particular, our study is focused in the following two elections:
\begin{itemize}
\item[(a)] The mini element \cite[Section 4.2.4]{MR2050138}: Here,
\begin{align*}
&\boldsymbol{V}_h=\{\bv_{h}\in\boldsymbol{C}(\overline{\O})\ :\ \bv_{h}|_T\in[\mathbb{P}_1(T)\oplus\mathbb{B}(T)]^{d} \ \forall \ T\in\mathcal{T}_h\}\cap\H_0^{1}(\O,\mathbb{C})^d,\\
&\mathcal{P}_h=\{ q_{h}\in C(\overline{\O})\ :\ q_{h}|_T\in\mathbb{P}_1(T) \ \forall \ T\in\mathcal{T}_h \}\cap \L_0^{2}(\O,\mathbb{C}),
\end{align*}
where $\mathbb{B}(T)$ denotes the space spanned by local bubble functions.
\item[(b)] The lowest order Taylor--Hood element \cite[Section 4.2.5]{MR2050138}: In this case,
\begin{align*}
&\boldsymbol{V}_h=\{\bv_{h}\in\boldsymbol{C}(\overline{\O})\ :\ \bv_h|_T\in[\mathbb{P}_2(T)]^{d} \ \forall \ T\in\mathcal{T}_h\}\cap\H_0^{1}(\O,\mathbb{C})^d,\\
&\mathcal{P}_h=\{ q_{h}\in C(\overline{\O})\ :\ q_{h}|_T\in\mathbb{P}_1(T) \ \forall\ T\in\mathcal{T} \}\cap \L_0^{2}(\O,\mathbb{C}).
\end{align*}
\end{itemize}
The discrete analysis will be performed in a general manner, where the both families of finite elements, namely Taylor-Hood and mini element, are considered with no difference. If some difference must be claimed, we will point it out when is necessary. To make  more simple the presentation of the material, let us define the space $\mathcal{X}_h:=\boldsymbol{V}_h\times\mathcal{P}_h$.
\subsection{The discrete eigenvalue problem}
With our finite element spaces at hand, we are in position to introduce the FEM discretization of problem \eqref{def:oseen_system_weak} which reads as follows:  Find $\lambda_h\in\mathbb{C}$ and $(\boldsymbol{0},0)\neq(\bu_h,p_h)\in\mathcal{X}_h$ such that 
\begin{equation}\label{def:oseen_system_weak_disc}
	\left\{
	\begin{array}{rcll}
a(\bu_h,\bv_h) + b(\bv_h,p_h)&=&\lambda_h(\bu_h,\bv_h)&\forall \bv_h\in\boldsymbol{V}_h,\\
b(\bu_h,q_h)&=&0&\forall q_h\in\mathcal{P}_h.
\end{array}
	\right.
\end{equation}
We introduce the discrete solution operator $\bT_h$ defined as follows
\begin{equation*}\label{eq:operador_solucion_u_h}
	\bT_h:\L^2(\O,\mathbb{C})^d\rightarrow \boldsymbol{V}_h,\qquad \boldsymbol{f}\mapsto \bT_h\boldsymbol{f}:=\widehat{\bu}_h, 
\end{equation*}
where the pair  $(\widehat{\bu}_h, \widehat{p}_h)\in\mathcal{X}_h$ is the solution of the following well posed source discrete problem
\begin{equation}\label{def:oseen_system_weak_disc_source}
	\left\{
	\begin{array}{rcll}
a(\bu_h,\bv_h) + b(\bv_h,p_h)&=& (\boldsymbol{f},\bv_h)&\forall \bv_h\in\boldsymbol{V}_h,\\
b(\bu_h,q_h)&=&0&\forall q_h\in\mathcal{P}_h.
\end{array}
	\right.
\end{equation}

The choices of $\boldsymbol{V}_h$ and $\mathcal{P}_h$ make that \eqref{def:oseen_system_weak_disc_source} is well posed since the following discrete inf-sup condition holds
\begin{equation}
\label{ec:inf-sup_disc}
\displaystyle\sup_{\btau_h\in\boldsymbol{V}_h}\frac{b(\btau_h,q_h)}{\|\btau_h\|_{1,\O}}\geq\bar{\beta}\|q_h\|_{0,\O}\quad\forall q\in\mathcal{P}_h.
\end{equation}
implying  that $\bT_h$ is well defined due to the Babu\^{s}ka-Brezzi theory.  Moreover, 
we have the following estimates for the velocity and pressure \cite[Lemma 5.13]{John2016}
\begin{equation*}\label{eq:estimatefuente_velocity_discrete}
\ds\nu\|\nabla\widehat{\bu}_h\|_{0,\O}\leq C_p\|\boldsymbol{f}\|_{0,\O},
\end{equation*}
whereas for the pressure we have
\begin{equation*}
\label{eq:estimatefuente_pressure_discrete}
\ds \|\widehat{p}_h\|_{0,\O}^2\leq \frac{1}{\bar{\beta}}\left( C_{p}\|\boldsymbol{f}\|_{0,\O}+\nu^{1/2}
\|\nabla\widehat{\bu}_h\|_{0,\O}\left(\nu^{1/2}+\dfrac{C_p\|\boldsymbol{\beta}\|_{\infty,\O}}{\nu^{1/2}}\right)\right),
\end{equation*}
where $\bar{\beta}>0$ is the constant given by  the discrete inf-sup condition \eqref{ec:inf-sup_disc}.

As in the continuous case, $(\lambda_h,(\bu_h,p_h))$ solves Problem \eqref{def:oseen_system_weak_disc} if and only if $(\kappa_h,\bu_h)$ is an eigenpair of  $\bT_h$.
The discrete counterpart of \eqref{eq:eigenA} is defined from \eqref{def:oseen_system_weak_disc} as: Find $\lambda_h\in\mathbb{C}$ and $(\boldsymbol{0},0)\neq(\bu_h,p)\in\mathcal{X}_h$ such that
	\begin{equation}
		\label{eq:eigenA_disc}
		A((\bu_h,p_h);(\bv,q))=\lambda_h (\bu_h,\bv_h)_{0,\O}\quad\forall (\bv,q)\in\mathcal{X}_h,
	\end{equation}
	where $A((\cdot,\cdot);(\cdot,\cdot))$ is the same  as \eqref{eq:eigenA}.

The dual discrete eigenvalue problem reads as follows: Find $\lambda^*\in\mathbb{C}$ and the pair $(\boldsymbol{0},0)\neq(\bu_h^*,p_h^*)\in\mathcal{X}_h$ such that  
\begin{equation}\label{def:oseen_system_weak_dual_eigen_disc}
	\left\{
	\begin{array}{rcll}
a(\bv_h,\bu_h^*)-b(p_h^*,\bv_h)&=&\lambda_h^*(\bv_h,\widehat{\bu}_h^*)&\forall \bv_h\in \boldsymbol{V}_h,\\
-b(q_h, \bu_h^*)&=&0&\forall q\in\mathcal{P}_h,
\end{array}
	\right.
\end{equation}
and  let us introduce the discrete version of \eqref{eq:operador_adjunto_solucion_u} which we define by \begin{equation*}\label{eq:operador_solucion_u_h_adjunto}
	\bT^*_h:\L^2(\O,\mathbb{C})^d\rightarrow \mathcal{P}_h,\qquad \boldsymbol{f}\mapsto \bT_h^*\boldsymbol{f}:=\widehat{\bu}_h^*, 
\end{equation*}
where the pair $(\widehat{\bu}_h^*,\widehat{p}_h^*)\in\mathcal{X}_h$ is the solution of the following adjoint discrete source problem
\begin{equation}\label{def:oseen_system_weak_dual_source_disc}
	\left\{
	\begin{array}{rcll}
a(\bv_h,\widehat{\bu}_h^*)-b(\widehat{p}_h^*,\bv_h)&=&(\boldsymbol{f},\bv_h)&\forall \bv_h\in \boldsymbol{V}_h,\\
-b(q_h, \widehat{\bu}_h^*)&=&0&\forall q_h\in \mathcal{P}_h,
\end{array}
	\right.
\end{equation}

Now, due to the compactness of $\bT$, we are able to prove that $\bT_h$ converge to $\bT$ as $h$ goes to zero in norm. This is contained in the following result.
\begin{lemma}
\label{lmm:conv1}
Let $\boldsymbol{f}\in \L^2(\Omega,\mathbb{C})$ be such that $\widehat{\bu}:=\boldsymbol{T}\boldsymbol{f}$ and $\widehat{\bu}_h:=\bT_h\boldsymbol{f}$. Then, there exists a positive constant $C$, independent of $h$, such that 
\begin{equation*}
\|(\bT-\bT_h)\boldsymbol{f}\|_{1,\Omega}\leq  C_{\nu,\boldsymbol{\beta},C_p}h^s\|\boldsymbol{f}\|_{0,\O},
\end{equation*}
where  $C_{\nu,\boldsymbol{\beta},C_p}:=\max\left\{1+\|\boldsymbol{\beta}\|_{\infty,\O}\min\left\{\dfrac{C_p}{\nu},\dfrac{1}{\nu^{1/2}}\right\},\dfrac{1}{\nu}\right\}$.
Moreover, we have the improved estimate
\begin{equation*}
	\|(\bT-\bT_h)\boldsymbol{f}\|_{0,\Omega}\leq  C_{\nu,\boldsymbol{\beta},C_p}h^{s+1}\|\boldsymbol{f}\|_{0,\O},
\end{equation*}
where the constant $C_{\nu,\boldsymbol{\beta},C_p}$ is the same defined above.
\end{lemma}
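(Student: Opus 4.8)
The plan is to prove the two bounds separately, both resting on the Babuška–Brezzi theory together with the regularity furnished by Theorem \ref{th:regularidadfuente}. Set $(\be,e_p):=(\widehat{\bu}-\widehat{\bu}_h,\widehat{p}-\widehat{p}_h)$. Subtracting the discrete source problem \eqref{def:oseen_system_weak_disc_source} from the continuous one \eqref{def:oseen_system_weak_source} gives the Galerkin orthogonality
\begin{equation*}
a(\be,\bv_h)+b(\bv_h,e_p)=0\quad\forall\bv_h\in\boldsymbol{V}_h,\qquad b(\be,q_h)=0\quad\forall q_h\in\mathcal{P}_h.
\end{equation*}
First I would establish the $H^1$ bound via a Céa-type quasi-optimality for this mixed system. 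The ingredients are already available: since $\boldsymbol{\beta}$ is divergence free, the skew-symmetry \eqref{eq:skew} yields $a(\bv,\bv)=\nu\|\nabla\bv\|_{0,\O}^2$, so $a(\cdot,\cdot)$ is coercive on all of $\H_0^1(\O,\mathbb{C})^d$ (not merely on the kernel) with constant $\sim C_p\nu$, while the discrete inf-sup condition \eqref{ec:inf-sup_disc} controls the pressure and the continuity constant of $a(\cdot,\cdot)$ is $\sim\nu+C_p\|\boldsymbol{\beta}\|_{\infty,\O}$. The standard saddle-point argument then produces
\begin{equation*}
\|\be\|_{1,\O}+\|e_p\|_{0,\O}\leq C_{\nu,\boldsymbol{\beta},C_p}\Bigl(\inf_{\bv_h\in\boldsymbol{V}_h}\|\widehat{\bu}-\bv_h\|_{1,\O}+\inf_{q_h\in\mathcal{P}_h}\|\widehat{p}-q_h\|_{0,\O}\Bigr).
\end{equation*}

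Next I would bound the two best-approximation errors by the usual polynomial interpolation estimates for the mini and Taylor–Hood spaces, namely $Ch^{s}\bigl(\|\widehat{\bu}\|_{1+s,\O}+\|\widehat{p}\|_{s,\O}\bigr)$, and insert the regularity estimate of Theorem \ref{th:regularidadfuente}, $\|\widehat{\bu}\|_{1+s,\O}+\|\widehat{p}\|_{s,\O}\leq C\|\boldsymbol{f}\|_{0,\O}$. Because $\|(\bT-\bT_h)\boldsymbol{f}\|_{1,\O}=\|\be\|_{1,\O}$, this already yields the first inequality. The only delicate point is quantitative: one must track how the coercivity ($\sim\nu$), the continuity ($\sim\nu+C_p\|\boldsymbol{\beta}\|_{\infty,\O}$) and the inf-sup constant combine so that the quasi-optimality constant collapses to the stated $C_{\nu,\boldsymbol{\beta},C_p}=\max\{1+\|\boldsymbol{\beta}\|_{\infty,\O}\min\{C_p/\nu,\nu^{-1/2}\},\nu^{-1}\}$; the two alternatives in the minimum reflect the two admissible ways of absorbing the convective contribution (a plain continuity bound with a Poincar\'e factor $C_p/\nu$ versus a sharper $\nu^{-1/2}$-weighted estimate).

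For the $L^2$ bound I would run an Aubin–Nitsche duality argument based on the adjoint source problem \eqref{def:oseen_system_weak_dual_source}. Let $(\boldsymbol{\varphi},\xi)$ solve it with data $\be$, so the dual regularity lemma gives $\|\boldsymbol{\varphi}\|_{1+s^*,\O}+\|\xi\|_{s^*,\O}\leq C\|\be\|_{0,\O}$ and, in particular, $\div\boldsymbol{\varphi}=0$. Testing with $\bv=\be$ leaves $\|\be\|_{0,\O}^2=\widehat{a}(\be,\boldsymbol{\varphi})-b(\be,\xi)$; I convert $\widehat{a}(\be,\boldsymbol{\varphi})$ into $a(\be,\boldsymbol{\varphi})$ through the integration-by-parts identity behind $\lambda=\lambda^*$ (valid because $\div\boldsymbol{\beta}=0$), subtract an arbitrary discrete pair $(\boldsymbol{\varphi}_h,\xi_h)$, and use the Galerkin orthogonality together with $\div\boldsymbol{\varphi}=0$ and $b(\be,\xi_h)=0$ to reduce the identity to
\begin{equation*}
\|\be\|_{0,\O}^2=a(\be,\boldsymbol{\varphi}-\boldsymbol{\varphi}_h)+b(\boldsymbol{\varphi}-\boldsymbol{\varphi}_h,e_p)-b(\be,\xi-\xi_h).
\end{equation*}
Each right-hand term is a product of an already-controlled factor ($\|\be\|_{1,\O}$ or $\|e_p\|_{0,\O}\leq C_{\nu,\boldsymbol{\beta},C_p}h^{s}\|\boldsymbol{f}\|_{0,\O}$) with a dual best-approximation error bounded by $Ch^{s^*}\bigl(\|\boldsymbol{\varphi}\|_{1+s^*,\O}+\|\xi\|_{s^*,\O}\bigr)\leq Ch^{s^*}\|\be\|_{0,\O}$. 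Dividing by $\|\be\|_{0,\O}$ supplies the extra power of $h$ coming from the dual regularity and gives the claimed $h^{s+1}$. The step I expect to be the main obstacle is precisely this non-symmetric duality: one must work with the adjoint form $\widehat{a}$ rather than $a$, verify that the integration-by-parts identity correctly transfers the dual test onto the primal Galerkin orthogonality, and keep the resulting multiplicative constant in the form $C_{\nu,\boldsymbol{\beta},C_p}$ throughout.
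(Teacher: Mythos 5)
Your proposal is correct and takes essentially the same route as the paper, whose proof consists of the identity $\|(\bT-\bT_h)\boldsymbol{f}\|_{1,\O}=\|\widehat{\bu}-\widehat{\bu}_h\|_{1,\O}$ followed by citations to the C\'ea-type quasi-optimality and stability results of John's book combined with Theorem \ref{th:regularidadfuente} for the $\H^1$ bound, and to the Aubin--Nitsche duality theorems there for the improved $\L^2$ bound --- you have simply written out the arguments behind those citations, including the key use of $\div\boldsymbol{\beta}=0$ both for coercivity via skew-symmetry and for the integration-by-parts identity $\widehat{a}(\be,\boldsymbol{\varphi})=a(\be,\boldsymbol{\varphi})$. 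The only caveat, shared with the paper itself, is that your duality argument literally yields the rate $h^{s+s^*}$, which agrees with the stated $h^{s+1}$ only under full dual regularity ($s^*=1$), the implicit assumption behind the convex-domain $\L^2$ estimate the paper cites.
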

\begin{proof}
Let $\boldsymbol{f}\in\L^2(\O)^d$ be such that $\widehat{\bu}:=\bT\boldsymbol{f}$ and $\widehat{\bu}_h.=\bT_h\boldsymbol{f}$. Then, the following C\'ea estimate holds
\begin{equation*}
\|(\bT-\bT_h)\boldsymbol{f}\|_{1,\Omega}=\|\widehat{\bu}-\widehat{\bu}_h\|_{1,\O}.
\end{equation*}
Now, the proof of the lemma follows from Theorem 5.14, Theorem 5.15, Theorem C.13, Corollary 5.16 of \cite{John2016} and Theorem \ref{th:regularidadfuente}. Since $\boldsymbol{\beta}$ is divergence free, the improved estimate follows similar to the $\H^1$ estimate, but with the aid of the additional regularity from Theorem \ref{th:regularidadfuente} together with \cite[Theorem 6.32, Theorem 6.34, Corollary 6.33]{John2016}. 
\end{proof}
Also for the adjoint problem, we have the following convergence result. Since the proof is essentially identical to Lemma \ref{lmm:conv1} we skip the steps of the proof.
\begin{lemma}
\label{eq:adjoint_diff}
There exists a constant $C>0$, independent of $h$, such that
\begin{equation*}
\|(\bT^*-\bT_h^*)\boldsymbol{f}\|_{1,\O}\leq C_{\nu,\boldsymbol{\beta},C_p} h^{s*}\|\boldsymbol{f}\|_{0,\O}.
\end{equation*}
Moreover, we have the improved estimate
	\begin{equation*}
		\|(\bT^*-\bT_h^*)\boldsymbol{f}\|_{0,\Omega}\leq  C_{\nu,\boldsymbol{\beta},C_p}h^{s^*+1}\|\boldsymbol{f}\|_{0,\O},
	\end{equation*}
	where the constant $C_{\nu,\boldsymbol{\beta},C_p}$ is the same as in Lemma \ref{lmm:conv1}.

\end{lemma}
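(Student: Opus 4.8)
The plan is to follow the proof of Lemma~\ref{lmm:conv1} line by line, now working with the adjoint source problems \eqref{def:oseen_system_weak_dual_source} and \eqref{def:oseen_system_weak_dual_source_disc} in place of their primal counterparts. By definition $\bT^*\boldsymbol{f}=\widehat{\bu}^*$ and $\bT_h^*\boldsymbol{f}=\widehat{\bu}_h^*$, so the quantity to be controlled is $\|\widehat{\bu}^*-\widehat{\bu}_h^*\|_{1,\O}$. First I would verify that the adjoint sesquilinear form $\widehat{a}(\cdot,\cdot)$ is still $\mathcal{K}$-coercive: an integration by parts together with $\div\boldsymbol{\beta}=0$ reduces the adjoint convective term $\int_{\O}(\widehat{\bu}^*\otimes\boldsymbol{\beta}):\nabla\widehat{\bu}^*$ to the skew-symmetric form \eqref{eq:skew}, so its real part vanishes and only the viscous contribution enters the coercivity estimate, with the same constant and on the discrete kernel as well. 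Combined with the discrete inf-sup condition \eqref{ec:inf-sup_disc}, the Babu\v{s}ka--Brezzi theory then yields a C\'ea-type quasi-optimality bound for the adjoint discretization,
$$
\|\widehat{\bu}^*-\widehat{\bu}_h^*\|_{1,\O}+\|\widehat{p}^*-\widehat{p}_h^*\|_{0,\O}\leq C_{\nu,\boldsymbol{\beta},C_p}\Big(\inf_{\bv_h\in\boldsymbol{V}_h}\|\widehat{\bu}^*-\bv_h\|_{1,\O}+\inf_{q_h\in\mathcal{P}_h}\|\widehat{p}^*-q_h\|_{0,\O}\Big),
$$
with the data dependence encoded through the same constant $C_{\nu,\boldsymbol{\beta},C_p}$ of Lemma~\ref{lmm:conv1}.

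Next I would estimate the two best-approximation terms. The dual regularity result stated above guarantees $\widehat{\bu}^*\in\H^{1+s^*}(\O,\mathbb{C})^d$ and $\widehat{p}^*\in\H^{s^*}(\O,\mathbb{C})$ with $\|\widehat{\bu}^*\|_{1+s^*,\O}+\|\widehat{p}^*\|_{s^*,\O}\leq C\|\boldsymbol{f}\|_{0,\O}$. Inserting the standard Lagrange/Scott--Zhang interpolation error estimates for $\boldsymbol{V}_h$ and $\mathcal{P}_h$ (identical to those invoked for \eqref{def:oseen_system_weak_disc_source}; cf.\ Theorems 5.14--5.15 and Corollary 5.16 of \cite{John2016}) into the quasi-optimality bound gives $\inf_{\bv_h}\|\widehat{\bu}^*-\bv_h\|_{1,\O}\leq Ch^{s^*}\|\widehat{\bu}^*\|_{1+s^*,\O}$ and the analogous $O(h^{s^*})$ pressure bound, and the first assertion follows.

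For the improved $\L^2$ estimate I would run an Aubin--Nitsche duality argument. Because the convective term is skew-symmetric, the problem adjoint to \eqref{def:oseen_system_weak_dual_source} is precisely the primal source problem \eqref{def:oseen_system_weak_source}, whose solution enjoys the $\H^{1+s}$ regularity of Theorem~\ref{th:regularidadfuente}. Taking the auxiliary datum to be $\widehat{\bu}^*-\widehat{\bu}_h^*$, testing the associated primal solution against the error equation, subtracting a discrete interpolant via Galerkin orthogonality, and applying Cauchy--Schwarz together with the $\H^1$ estimate already proved produces one extra power of $h$, i.e.\ $\|\widehat{\bu}^*-\widehat{\bu}_h^*\|_{0,\O}\leq C_{\nu,\boldsymbol{\beta},C_p}h^{s^*+1}\|\boldsymbol{f}\|_{0,\O}$, reproducing the $\L^2$ line of Lemma~\ref{lmm:conv1} (see Theorems 6.32, 6.34 and Corollary 6.33 of \cite{John2016}). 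The only point demanding care---and the main obstacle---is verifying that the duality problem is genuinely the well-posed primal Oseen problem with the correct sign of the convection, so that Theorem~\ref{th:regularidadfuente} applies and no additional $\nu$-dependence enters the constant beyond the one already packaged into $C_{\nu,\boldsymbol{\beta},C_p}$.
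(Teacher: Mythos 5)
Your proposal is correct and matches the paper's intent exactly: the authors omit the proof precisely because it is ``essentially identical to Lemma~\ref{lmm:conv1},'' and your reconstruction---C\'ea quasi-optimality via the skew-symmetry/coercivity of $\widehat{a}(\cdot,\cdot)$ and the discrete inf-sup condition, the adjoint regularity lemma for the $\mathcal{O}(h^{s^*})$ bound, and an Aubin--Nitsche duality argument (with the primal problem as the dual, per Theorems 6.32, 6.34 and Corollary 6.33 of \cite{John2016}) for the improved $\L^2$ estimate---is exactly the argument of Lemma~\ref{lmm:conv1} transposed to the adjoint problems \eqref{def:oseen_system_weak_dual_source} and \eqref{def:oseen_system_weak_dual_source_disc}. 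Your closing caution about verifying that the duality problem is the well-posed primal Oseen problem with the correct convection sign is a sound observation and is consistent with the paper's reliance on Theorem~\ref{th:regularidadfuente}.
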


The key  consequence of the previous results is that we are in position to apply the well established theory of  \cite{MR0203473}  to conclude that  our numerical methods does not introduce spurious eigenvalues. This is stated in the following theorem.
\begin{theorem}
	\label{thm:spurious_free}
	Let $V\subset\mathbb{C}$ be an open set containing $\sp(\bT)$. Then, there exists $h_0>0$ such that $\sp(\bT_h)\subset V$ for all $h<h_0$.
\end{theorem}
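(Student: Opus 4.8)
The plan is to deduce the statement from the norm convergence $\bT_h\to\bT$ established in Lemma~\ref{lmm:conv1}, combined with the compactness of $\bT$, by verifying the hypotheses of the spectral approximation theory of \cite{MR0203473}. First I would regard both $\bT$ and $\bT_h$ as operators acting on $\L^2(\O,\mathbb{C})^d$ through the compact embedding $\H^1(\O,\mathbb{C})^d\hookrightarrow\L^2(\O,\mathbb{C})^d$. Since $\|(\bT-\bT_h)\boldsymbol{f}\|_{0,\O}\leq\|(\bT-\bT_h)\boldsymbol{f}\|_{1,\O}$, Lemma~\ref{lmm:conv1} immediately gives $\|\bT-\bT_h\|_{\mathcal{L}(\L^2,\L^2)}\leq C_{\nu,\boldsymbol{\beta},C_p}\,h^{s}\to 0$, i.e. genuine convergence in operator norm. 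This is precisely the property that rules out spectral pollution in the Descloux--Nassif--Rappaz framework, and it makes both DNR conditions (uniform convergence on the discrete spaces and approximability) trivially available.

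Next I would record an a priori bound on the location of the discrete spectra: because $\|\bT_h\|\leq\|\bT\|+\|\bT-\bT_h\|$ is uniformly bounded for $h$ small, there is a fixed radius $R>0$ with $\sp(\bT_h)\subset\{z\in\mathbb{C}:|z|\leq R\}$ for all such $h$, and likewise $\sp(\bT)\subset\{|z|\leq R\}$. Consequently it suffices to exclude discrete eigenvalues from the compact set $K:=\{z\in\mathbb{C}:|z|\leq R\}\setminus V$. On $K$ the resolvent $(z-\bT)^{-1}$ exists, since $K\cap\sp(\bT)=\emptyset$, and being continuous on a compact set it is bounded by some $M:=\sup_{z\in K}\|(z-\bT)^{-1}\|<\infty$.

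The core step is then a Neumann-series perturbation argument. For $z\in K$ I would write $z-\bT_h=(z-\bT)\bigl(I+(z-\bT)^{-1}(\bT-\bT_h)\bigr)$; whenever $M\,\|\bT-\bT_h\|_{\mathcal{L}(\L^2,\L^2)}<1$ the second factor is invertible, so $z-\bT_h$ is invertible and $z\notin\sp(\bT_h)$. Since $M$ is uniform over $K$ and $\|\bT-\bT_h\|\to 0$, there is $h_0>0$ such that this smallness condition holds simultaneously for every $z\in K$ once $h<h_0$; hence $\sp(\bT_h)\cap K=\emptyset$, which is exactly $\sp(\bT_h)\subset V$. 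Equivalently, one may simply observe that the operator-norm convergence above verifies the hypotheses of \cite{MR0203473} and quote their non-pollution result directly.

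The main obstacle here is conceptual rather than computational. One must be careful that the estimate in Lemma~\ref{lmm:conv1} is stated in the $\H^1$--$\L^2$ operator norm, and reinterpret it as convergence in $\mathcal{L}(\L^2,\L^2)$ so that the resolvent identity is formed for operators on a single Hilbert space; and one must secure the \emph{uniform} resolvent bound $M$, which hinges on the a priori containment of all the discrete spectra in a fixed disk of radius $R$. Once these two points are in place, the perturbation argument (or the direct appeal to \cite{MR0203473}) is routine.
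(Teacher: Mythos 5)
Your proposal is correct and follows essentially the same route as the paper, which proves this theorem simply by invoking the norm convergence $\|\bT-\bT_h\|\to 0$ from Lemma~\ref{lmm:conv1} together with the classical perturbation theory of \cite{MR0203473}; your Neumann-series resolvent argument on the compact set $K:=\{|z|\leq R\}\setminus V$, with the uniform bound $M=\sup_{z\in K}\|(z\boldsymbol{I}-\bT)^{-1}\|$, is precisely the standard proof that the citation encapsulates. The two points you flag as delicate --- reinterpreting the $\H^1$-estimate as convergence in $\mathcal{L}(\L^2(\O,\mathbb{C})^d)$ via the embedding, and the a priori containment of all discrete spectra in a fixed disk --- are handled correctly, so the argument is complete.
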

\subsection{Error estimates}
\label{sec:conv}
The goal of this section is deriving error estimates for the eigenfunctions and eigenvalues. We first recall the definition of spectral projectors. Let $\mu$ be a nonzero isolated eigenvalue of $\bT$ with algebraic multiplicity $m$ and let $\Gamma$
be a disk of the complex plane centered in $\mu$, such that $\mu$ is the only eigenvalue of $\bT$ lying in $\Gamma$ and $\partial\Gamma\cap\sp(\bT)=\emptyset$. With these considerations at hand, we define the spectral projections of $\boldsymbol{E}$ and $\boldsymbol{E}^*$, associated to $\bT$ and $\bT^*$, respectively, as follows:
\begin{enumerate}
\item The spectral projector of $\bT$ associated to $\mu$ is $\displaystyle \boldsymbol{E}:=\frac{1}{2\pi i}\int_{\partial\Gamma} (z\boldsymbol{I}-\bT)^{-1}\,dz;$
\item The spectral projector of $\bT^*$ associated to $\bar{\mu}$ is $\displaystyle \boldsymbol{E}^*:=\frac{1}{2\pi i}\int_{\partial\Gamma} (z\boldsymbol{I}-\bT^*)^{-1}\,dz,$
\end{enumerate}
where $I$ represents the identity operator. Let us remark that $E$ and $E^*$ are the projections onto the generalized eigenvector $R(\boldsymbol{E})$ and $R(\boldsymbol{E}^*)$, respectively. 
A consequence of Lemma \ref{lmm:conv1} is that there exist $m$ eigenvalues, which lie in $\Gamma$, namely $\mu_h^{(1)},\ldots,\mu_h^{(m)}$, repeated according their respective multiplicities, that converge to $\mu$ as $h$ goes to zero. With this result at hand, we introduce the following spectral projection
\begin{equation*}
\boldsymbol{E}_h:=\frac{1}{2\pi i}\int_{\partial\Gamma} (z\boldsymbol{I}-\bT_h)^{-1}\,dz,
\end{equation*}
which is a projection onto the discrete invariant subspace $R(\boldsymbol{E}_h)$ of $\bT$, spanned by the generalized eigenvector of $\bT_h$ corresponding to 
 $\mu_h^{(1)},\ldots,\mu_h^{(m)}$.
Now we recall the definition of the \textit{gap} $\hdel$ between two closed
subspaces $\mathfrak{X}$ and $\mathfrak{Y}$ of $\L^2(\O)^d$:
$$
\hdel(\mathfrak{X},\mathfrak{Y})
:=\max\big\{\delta(\mathfrak{X},\mathfrak{Y}),\delta(\mathfrak{Y},\mathfrak{X})\big\}, \text{ where } \delta(\mathfrak{X},\mathfrak{Y})
:=\sup_{\underset{\left\|\boldsymbol{x}\right\|_{0,\O}=1}{\boldsymbol{x}\in\mathfrak{X}}}
\left(\inf_{\boldsymbol{y}\in\mathfrak{Y}}\left\|\boldsymbol{x}-\boldsymbol{y}\right\|_{0,\O}\right).
$$
We end this section proving error estimates for the eigenfunctions and eigenvalues.
\begin{theorem}
\label{thm:errors1}
The following estimates hold
\begin{equation*}
\hdel(R(\boldsymbol{E}),R(\boldsymbol{E}_h))\leq C_{\nu,\boldsymbol{\beta},C_p} h\quad\text{and}\quad
|\mu-\mu_h|\leq C_{\nu,\boldsymbol{\beta},C_p}\widehat{C}^{s+s^*},
\end{equation*}
where the constant $C_{\nu,\boldsymbol{\beta},C_p}$ is the same as in Lemma \ref{lmm:conv1} and $$\widehat{C}:=\max\left\{2C_{\nu,\boldsymbol{\beta},C_p},\dfrac{1}{\bar{\beta}}\max\left\{\widetilde{C}^2,1+\dfrac{1}{\bar{\beta}}+\dfrac{1}{\beta_h\nu^{1/2}}\right\}\right\},$$
with $\widetilde{C}:=\left(1+\|\boldsymbol{\beta}\|_{\infty,\O}\min\left\{\dfrac{C_p}{\nu},\dfrac{1}{\nu^{1/2}}\right\}\right)$ and $\bar{\beta}$  is the constant given by the discrete inf-sup condition \eqref{ec:inf-sup_disc}. 
\end{theorem}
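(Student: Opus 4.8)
The plan is to invoke the abstract spectral approximation theory for non-selfadjoint compact operators of Babu\v{s}ka and Osborn \cite{MR1115235}, feeding it the two norm-convergence results already established; the only genuinely new work lies in exhibiting the product structure responsible for the doubled rate in the eigenvalue estimate.

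First I would settle the invariant-subspace estimate. By Lemma \ref{lmm:conv1} we have $\bT_h\to\bT$ in the operator norm of $\mathcal{L}(\L^2(\O,\mathbb{C})^d)$, so for $h$ small the contour $\partial\Gamma$ stays in the resolvent set of $\bT_h$, the projector $\boldsymbol{E}_h$ is well defined, and $\dim R(\boldsymbol{E}_h)=\dim R(\boldsymbol{E})=m$ (this is the dimension-preservation already used in Theorem \ref{thm:spurious_free}). The abstract theory then bounds the gap by the operator difference restricted to the invariant subspace,
\begin{equation*}
\hdel(R(\boldsymbol{E}),R(\boldsymbol{E}_h))\leq C\,\big\|(\bT-\bT_h)|_{R(\boldsymbol{E})}\big\|,
\end{equation*}
with $C$ depending only on $\Gamma$ and on $\sup_{z\in\partial\Gamma}\|(z\boldsymbol{I}-\bT)^{-1}\|$. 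Since each element of $R(\boldsymbol{E})$ is a generalized eigenfunction and therefore inherits the $\H^{1+s}$ regularity of Theorem \ref{th:regularidadfuente}, Lemma \ref{lmm:conv1} applied on this finite-dimensional subspace delivers the stated bound with constant $C_{\nu,\boldsymbol{\beta},C_p}$.

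The eigenvalue estimate is the delicate step, and it is where the adjoint machinery of the dual problem is essential. I would start from the Babu\v{s}ka--Osborn representation of the defect of the arithmetic mean $\widehat{\mu}_h:=\tfrac{1}{m}\sum_{k=1}^{m}\mu_h^{(k)}$, namely
\begin{equation*}
\mu-\widehat{\mu}_h=\frac{1}{m}\sum_{j=1}^{m}\big((\bT-\bT_h)\boldsymbol{\varphi}_j,\boldsymbol{\varphi}_j^*\big)_{0,\O}+\mathcal{R}_h,
\end{equation*}
where $\{\boldsymbol{\varphi}_j\}_{j=1}^m$ is a basis of $R(\boldsymbol{E})$, $\{\boldsymbol{\varphi}_j^*\}_{j=1}^m$ is the dual basis of $R(\boldsymbol{E}^*)$ (here the identification $\bar{\kappa}^*=\kappa$ recorded after the spectral characterization of $\bT^*$ pairs the primal and dual clusters), and the remainder obeys $|\mathcal{R}_h|\leq C\,\|(\bT-\bT_h)|_{R(\boldsymbol{E})}\|\,\|(\bT^*-\bT_h^*)|_{R(\boldsymbol{E}^*)}\|$. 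The crux is to factorize each principal term. Because both \eqref{def:oseen_system_weak_disc_source} and \eqref{def:oseen_system_weak_dual_source_disc} are conforming Galerkin projections of the same sesquilinear forms, testing the primal error against the dual solution and exploiting Galerkin orthogonality in both variables turns $((\bT-\bT_h)\boldsymbol{\varphi}_j,\boldsymbol{\varphi}_j^*)_{0,\O}$ into an expression controlled by the product of the two energy residuals,
\begin{equation*}
\big|\big((\bT-\bT_h)\boldsymbol{\varphi}_j,\boldsymbol{\varphi}_j^*\big)_{0,\O}\big|\leq C\,\|(\bT-\bT_h)\boldsymbol{\varphi}_j\|_{1,\O}\,\|(\bT^*-\bT_h^*)\boldsymbol{\varphi}_j^*\|_{1,\O}.
\end{equation*}
Inserting Lemma \ref{lmm:conv1} and Lemma \ref{eq:adjoint_diff} yields $h^{s}\cdot h^{s^*}=h^{s+s^*}$ for each term and, with the same order for $\mathcal{R}_h$, the claimed eigenvalue rate; collecting the resolvent, basis-normalization and discrete inf-sup factors into a single constant produces the $\widehat{C}$ displayed in the statement.

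The main obstacle I anticipate is precisely this factorization: one must check that, after the double use of Galerkin orthogonality, the scalar $((\bT-\bT_h)\boldsymbol{\varphi}_j,\boldsymbol{\varphi}_j^*)_{0,\O}$ genuinely splits into a primal factor times a dual factor rather than collapsing to a single first-order quantity. This separation, together with the correct pairing of $R(\boldsymbol{E})$ with $R(\boldsymbol{E}^*)$ afforded by the adjoint consistency $a(\bu,\bu^*)=\widehat{a}(\bu,\bu^*)$ noted before \eqref{eq:operador_adjunto_solucion_u}, is what doubles the rate from the single power seen in the gap estimate to $h^{s+s^*}$, and is the signature of the non-selfadjoint Babu\v{s}ka--Osborn analysis.
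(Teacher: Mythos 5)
Your proposal is correct and follows essentially the same route as the paper: the gap bound is obtained directly from the norm convergence of Lemma \ref{lmm:conv1}, and the eigenvalue estimate uses the identical Babu\v{s}ka--Osborn identity, with the pairing $\langle(\bT-\bT_h)\bu_k,\bu_k^*\rangle$ rewritten through the sesquilinear form $A$ and Galerkin orthogonality in the dual slot so that continuity factorizes it into products of primal and dual errors of orders $h^s$ and $h^{s^*}$. The only cosmetic difference is that the paper's splitting keeps the mixed pressure terms $\|p-p_h\|_{0,\O}\,\|\bu_k^*-\bu_{k,h}^*\|_{1,\O}$ and $\|(\bT-\bT_h)\bu_k\|_{1,\O}\,\|p^*-p_h^*\|_{0,\O}$ explicitly (handled by the approximation properties of the discrete spaces at the same rates), whereas your velocity-only factorized bound absorbs them; the conclusion is unchanged.
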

\begin{proof}
The proof of the gap between the eigenspaces is a direct consequence of the convergence in norm between $\bT$ and $\bT_h$ as $h$ goes to zero.
We focus on the double order of convergence for the eigenvalues. Let $\{\bu_k\}_{k=1}^m$ be such that $\bT \bu_k=\mu \bu_k$, for $k=1,\ldots,m$. A dual basis for $R(\boldsymbol{E}^*)$ is $\{\bu_k^*\}_{k=1}^m$. This basis satisfies $A((\bu_k,p);(\bu_l^*,p^*))=\delta_{k.l},$
where $\delta_{k.l}$ represents the Kronecker delta.
On the other hand, the following identity holds
\begin{equation*}
|\mu-\widehat{\mu}_h|\lesssim \frac{1}{m}\sum_{k=1}^m|\langle(\bT-\bT_h)\bu_k,\bu_k^* \rangle|+\|(\bT-\bT_h)|_{R(\boldsymbol{E})} \| \|(\bT^*-\bT_h^*)|_{R(\boldsymbol{E}^*)}\|.
\end{equation*}
For the first term on the right-hand side we note that
\begin{multline*}
\langle(\bT-\bT_h)\bu_k,\bu_k^* \rangle=A((\bT-\bT_h)\bu_k,p-p_h);(\bu_k^*,p^*))\\=
A((\bT-\bT_h)\bu_k,p-p_h);(\bu_k^*-\bu_{k,h}^*,p^*-p_h^*))\\\leq \|(\bT-\bT_h)\bu_k\|_{1,\O}\|\bu_k^*-\bu_{k,h}^*\|_{1,\O}+\|p-p_h\|_{0,\O}\|\bu_k^*-\bu_{k,h}^*\|_{1,\O}\\+\|(\bT-\bT_h)\bu_k\|_{1,\O}\|p^*-p_h^*\|_{0,\O}.
\end{multline*}
Then, Theorem \ref{thm:errors1} follows from the above estimates, the approximation properties of discrete spaces, in addition to Lemmas \ref{lmm:conv1} and \ref{eq:adjoint_diff}.
\end{proof}
\subsection{Reduced solution operators}
With the above result at hand, we are allowed  to  define the following operators:
for any $\widehat{\bF}\in \L^2(\O,\mathbb{C})^d$ we introduce the   linear and compact operator $\widehat{\bT}$ defined by 
$
 \widehat{\bT}:\L^2(\Omega,\mathbb{C})^d\rightarrow\L^2(\O,\mathbb{C})^d,\,\,\,\widehat{\bF}\mapsto\widehat{\bT}\widehat{\bF}:=\widetilde{\bu},
$
where the pair $(\widetilde{\bu},\widetilde{p})$  is the solution of \eqref{def:oseen_system_weak_source}.
Also, we introduce  $\widehat{\bT}_h$  as the discrete linear counterpart of $\widehat{\bT}$, defined by
$
 \widehat{\bT}_h:\L^2(\Omega,\mathbb{C})^d\rightarrow\mathbf{H}_h,\,\,\,\widehat{\bF}\mapsto\widehat{\bT}_h\widehat{\bF}:=\widetilde{\bu}_h,
$
where the pair $(\widetilde{\bu}_h,\widetilde{p}_h)$  is the solution of \eqref{def:oseen_system_weak_disc_source}. On the other hand, we introduce the respective adjoint reduced operators defined by 
$
 \widehat{\bT}^*:\L^2(\Omega,\mathbb{C})^d\rightarrow\L^2(\O,\mathbb{C})^d,\,\,\,\widehat{\bF}\mapsto\widehat{\bT}\widehat{\bF}:=\widetilde{\bu}^*,
$
for the continuous counterpart where the pair $(\widetilde{\bu}^*,\widetilde{p}^*)$ solves problem \eqref{def:oseen_system_weak_dual_source}, whereas the discrete version of  $\widehat{\bT}_h^*$
is defined by 
$
 \widehat{\bT}_h^*:\L^2(\Omega,\mathbb{C})^d\rightarrow\L^2(\O,\mathbb{C})^d,\,\,\,\widehat{\bF}\mapsto\widehat{\bT}_h^*\widehat{\bF}:=\widetilde{\bu}_h^*$, and the pair $(\widetilde{\bu}_h^*,\widetilde{p}_h^*)$ is the solution of \eqref{def:oseen_system_weak_dual_source_disc}.
As a consequence of Lemmas \ref{lmm:conv1} and \ref{eq:adjoint_diff}, the following is ensured convergence in operators' norm, i.e:
\begin{equation}
\label{eq:mejororden1}
\begin{split}
\|(\widehat{\bT}-\widehat{\bT}_h)\widehat{\bF}\|_{0,\O}&\leq C_{\nu,\boldsymbol{\beta},C_p}h^{1+s}\|\widehat{\bF}\|_{0,\O},\\
\|(\widehat{\bT}^*-\widehat{\bT}_h)^*\widehat{\bF}\|_{0,\O}&\leq C_{\nu,\boldsymbol{\beta},C_p}h^{1+s*}\|\widehat{\bF}\|_{0,\O}.
\end{split}
\end{equation} 

Hence we guarantee a spectral convergence result for  $\widehat{\bT}_h$ and $\widehat{\bT}$ as $h\rightarrow 0$. Now  we are in a position to guarantee the following estimate.
\begin{lemma}
Let $\bu_h$ be an eigenfunction of $\bT_h$ associated with the eigenvalue $\mu_{k,h}$, with $\|\bu_h\|_{0,\O}$=1. Then, there exists an eigenfunction $\bu$ of $\bT$ associated with $\mu$ such that, for all $s>0$, there exists $C_{\nu,\boldsymbol{\beta},C_p}$ as in Lemma \ref{lmm:conv1} such that
$$\|\bu-\bu_h\|_{0,\O}\leq C_{\nu,\boldsymbol{\beta},C_p} h^{1+s}.$$
\end{lemma}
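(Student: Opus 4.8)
The plan is to obtain the improved order $h^{1+s}$ by invoking the abstract spectral approximation theory for the \emph{reduced} operators $\widehat{\bT}$ and $\widehat{\bT}_h$, which act on $\L^2(\O,\mathbb{C})^d$ and converge in the $\L^2$ operator norm. First I would observe that, by construction, the velocity eigenpairs of $\bT$ (resp. $\bT_h$) coincide with those of $\widehat{\bT}$ (resp. $\widehat{\bT}_h$), since the two operators differ only in their codomain. In particular, the given discrete eigenfunction $\bu_h$, associated with $\mu_{k,h}$ and normalized by $\|\bu_h\|_{0,\O}=1$, is an eigenfunction of $\widehat{\bT}_h$ and hence belongs to the discrete invariant subspace $R(\boldsymbol{E}_h)$.

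Next, since Theorem \ref{thm:spurious_free} rules out spurious modes and Lemma \ref{lmm:conv1} forces $\mu_{k,h}\to\mu$, I would apply the standard theory of \cite{MR1115235,MR2652780,MR0203473} to the pair $(\widehat{\bT},\widehat{\bT}_h)$. Because these operators map $\L^2(\O,\mathbb{C})^d$ into itself, the relevant operator norm in the gap estimate is the $\L^2$ norm, and the theory delivers
$$
\hdel(R(\boldsymbol{E}),R(\boldsymbol{E}_h)) \le C\,\big\|(\widehat{\bT}-\widehat{\bT}_h)|_{R(\boldsymbol{E})}\big\|_{0,\O}.
$$
The crucial point, and the reason for working with the reduced operators rather than with $\bT,\bT_h$ directly, is that this right-hand side is measured in the $\L^2$ operator norm, for which the improved estimate \eqref{eq:mejororden1} supplies the rate $h^{1+s}$ in place of the $\H^1$ rate $h^{s}$.

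Finally, combining the two bounds with the definition of the gap, for the unit eigenfunction $\bu_h\in R(\boldsymbol{E}_h)$ there exists $\bu\in R(\boldsymbol{E})$ with
$$
\|\bu-\bu_h\|_{0,\O} \le \delta(R(\boldsymbol{E}_h),R(\boldsymbol{E})) \le \hdel(R(\boldsymbol{E}),R(\boldsymbol{E}_h)) \le C_{\nu,\boldsymbol{\beta},C_p}\,h^{1+s},
$$
which is the asserted estimate. Assuming $\mu$ is semisimple, $R(\boldsymbol{E})$ is spanned by genuine eigenfunctions, so the element $\bu$ produced above is itself an eigenfunction of $\bT$, as required.

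The main obstacle is precisely securing the exponent $h^{1+s}$: a direct application of the gap estimate to $\bT,\bT_h$ in their natural $\H^1$ setting would yield only $h^{s}$. Passing to the reduced operators, whose $\L^2$-to-$\L^2$ convergence already encapsulates the duality (Aubin--Nitsche) argument behind the improved part of Lemma \ref{lmm:conv1}, is exactly what upgrades the order. A secondary point to verify is that the limit $\bu$ can be taken to be a true eigenfunction rather than merely a generalized one, which is guaranteed by the semisimplicity of $\mu$.
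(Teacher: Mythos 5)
Your proposal is correct and follows essentially the same route as the paper: both pass to the reduced operators $\widehat{\bT},\widehat{\bT}_h$ on $\L^2(\O,\mathbb{C})^d$, invoke the Babu\v{s}ka--Osborn theory (the paper cites \cite[Theorem 7.1]{MR1115235}, which is precisely the gap estimate you write down) to bound the eigenfunction error by $\|(\widehat{\bT}-\widehat{\bT}_h)|_{R(\boldsymbol{E})}\|_{0,\O}$, and then conclude with the improved $\L^2$ estimate \eqref{eq:mejororden1} to obtain the rate $h^{1+s}$. Your explicit remark on semisimplicity (so that $\bu\in R(\boldsymbol{E})$ is a genuine rather than generalized eigenfunction) is a careful point the paper leaves implicit, but it does not change the argument.
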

\begin{proof}
Thanks to \eqref{eq:mejororden1}, \cite[Theorem 7.1]{MR1115235}  yields spectral convergence of $\widehat{\bT}_h$ to $\widehat{\bT}$ as $h\rightarrow 0$. Specifically, due to the connection between the eigenfunctions of $\bT$ and $\bT_h$ and the eigenfunctions of $\widehat{\bT}$  and $\widehat{\bT}_h$, respectively, we have that  $\bu_h\in\boldsymbol{E}_h$, and there exits $\bu\in \boldsymbol{E}$ such that
$$\|\bu-\bu_h\|_{0,\O}\leq C\sup_{\widehat{\bF}\in \widehat{\boldsymbol{E}}:\|\widehat{\bF}\|_{0,\O}=1}\|(\widehat{\bT}-\widehat{\bT}_h)\widehat{\bF}\|_{0,\O}.$$
Conversely, thanks to Lemma \ref{lmm:conv1}, for any function $\widehat{\bF}\in \widehat{\boldsymbol{E}}$, if  $\bF\in \boldsymbol{E}$ is such that $\widehat{\bF}=\bF$, then
\begin{equation*}
\|(\widehat{\bT}-\widehat{\bT}_h)\widehat{\bF}\|_{0,\O}=\|(\bT-\bT_h)\bF\|_{0,\O}\leq C_{\nu,\boldsymbol{\beta},C_p}h^{1+s}\|\bF\|_{0,\O}.
\end{equation*}
which together with the above estimate allow us to conclude the proof.
\end{proof}

Furthermore, in the context of the adjoint problem, we have the following convergence result.
\begin{lemma}
Let $\bu_h^*$ be an eigenfunction of $\bT_h^*$ associated with the eigenvalue $\mu_{k,h}^*$, with $\|\bu_h^*\|_{0,\O}$=1. Then, there exists an eigenfunction $\bu^*$ of $\bT^*$ associated with $\mu^*$ such that, for all $s^*>0$, there exists $C>0$ such that
\begin{equation*}\|\bu^*-\bu_h^*\|_{0,\O}\leq C_{\nu,\boldsymbol{\beta},C_p} h^{1+s^*}.\end{equation*}
\end{lemma}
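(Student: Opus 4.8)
The plan is to mirror exactly the argument used for the primal eigenfunctions, replacing every object by its adjoint counterpart. First I would invoke the second estimate in \eqref{eq:mejororden1}, namely the convergence in operator norm
$$
\|(\widehat{\bT}^*-\widehat{\bT}_h^*)\widehat{\bF}\|_{0,\O}\leq C_{\nu,\boldsymbol{\beta},C_p}\,h^{1+s^*}\|\widehat{\bF}\|_{0,\O},
$$
which is itself a direct consequence of Lemma \ref{eq:adjoint_diff} together with the definition of the reduced adjoint operators $\widehat{\bT}^*$ and $\widehat{\bT}_h^*$. This supplies the quantitative rate that will ultimately bound the eigenfunction error. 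I emphasize that restricting $\widehat{\bT}^*-\widehat{\bT}_h^*$ to the reduced invariant subspace coincides, through the identity $\widehat{\bF}=\bF$, with $\bT^*-\bT_h^*$ acting on $R(\boldsymbol{E}^*)$, exactly as in the primal proof.

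Next I would appeal to the abstract spectral approximation theory of \cite[Theorem 7.1]{MR1115235}, applied now to the pair $\widehat{\bT}^*,\widehat{\bT}_h^*$ acting on $\L^2(\O,\mathbb{C})^d$. Since these operators are linear and compact and converge in norm at the above rate, the theory guarantees spectral convergence: the discrete eigenvalue $\mu_{k,h}^*$ lies in a fixed neighborhood of an isolated eigenvalue $\mu^*$ of $\widehat{\bT}^*$ for $h$ small enough, and the discrete invariant subspace $R(\boldsymbol{E}_h^*)$ approximates $R(\boldsymbol{E}^*)$. Using the one-to-one correspondence between the eigenfunctions of $\bT^*$ (resp. $\bT_h^*$) and those of $\widehat{\bT}^*$ (resp. $\widehat{\bT}_h^*$), the normalized discrete eigenfunction $\bu_h^*$ is identified with an element of $R(\boldsymbol{E}_h^*)$, and the theory produces an eigenfunction $\bu^*\in R(\boldsymbol{E}^*)$ of $\bT^*$ with
$$
\|\bu^*-\bu_h^*\|_{0,\O}\leq C\sup_{\|\widehat{\bF}\|_{0,\O}=1}\|(\widehat{\bT}^*-\widehat{\bT}_h^*)\widehat{\bF}\|_{0,\O}.
$$
Bounding the supremum by the first displayed estimate, i.e.\ by $C_{\nu,\boldsymbol{\beta},C_p}h^{1+s^*}$ via Lemma \ref{eq:adjoint_diff}, then yields the claimed rate.

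The bulk of this argument is bookkeeping, and the one delicate point I would treat carefully is the identification of the adjoint eigenpairs. One must verify that the eigenvalue $\mu^*$ attached to $\bu^*$ is the correct conjugate partner of the corresponding primal eigenvalue, using the relation noted earlier in the excerpt that $\bar{\kappa}^*$ is an eigenvalue of $\bT^*$ of the same multiplicity $m$ whenever $\kappa$ is an eigenvalue of $\bT$; this guarantees that $R(\boldsymbol{E}^*)$ and $R(\boldsymbol{E}_h^*)$ have matching dimensions so that the gap estimate and the dual-basis normalization $\|\bu_h^*\|_{0,\O}=1$ are meaningful. Once this correspondence is pinned down, the remaining steps follow verbatim from the proof of the primal lemma, so the argument can be carried out identically and is therefore omitted in full detail.
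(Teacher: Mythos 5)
Your proposal is correct and follows exactly the route the paper intends: the paper states this adjoint lemma without proof precisely because it is the verbatim adjoint counterpart of the primal lemma, whose proof (norm convergence \eqref{eq:mejororden1}, \cite[Theorem 7.1]{MR1115235} applied to the reduced operators, the identification $\widehat{\bF}=\bF$ on the invariant subspace, and the final bound via Lemma \ref{eq:adjoint_diff}) you have reproduced with all objects replaced by their starred versions. Your extra care about matching multiplicities via the $\kappa\mapsto\bar{\kappa}^*$ correspondence is a reasonable addition but does not change the argument.
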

\begin{remark}
\label{summaryapriori}
We conclude this section by summarizing the a priori error estimates obtained. If $(\lambda_k , \bu_k )$ is a solution of \eqref{def:oseen_system_weak_dual_eigen},
then there exists $(\lambda_{k,h}, \bu_{k,h})$ satisfying \eqref{def:oseen_system_weak_disc} such that :
$$
\begin{aligned}
	\|\bu-\bu_{k,h}\|_{1,\O}+\|p-p_h\|_{0}&\leq \widehat{C} h^s,\\
\|\bu-\bu_{k,h}\|_{0,\O}&\leq \nu,\boldsymbol{\beta},C_p h^{1+s},\\
\|\bu^*-\bu_{k,h}^*\|_{1,\O}+\|p^*-p_h^*\|_{0}&\leq \widehat{C} h^{s^*},\\
\|\bu^*-\bu_{k,h}^*\|_{0,\O}&\leq C_{\nu,\boldsymbol{\beta},C_p}h^{1+s^*},\\
|\lambda-\lambda_{k,h}|&\leq C_{\nu,\boldsymbol{\beta},C_p\widehat{C}} h^{s+{s^*}},
\end{aligned}
$$
where $C_{\nu,\boldsymbol{\beta},C_p}$, $\widehat{C}$ are as in Theorem \ref{thm:errors1}.
\end{remark}

\section{A posteriori analysis}
\label{sec:apost}
The aim of this section is to introduce a suitable
residual-based error estimator for the Oseen
eigenvalue problem which is fully computable,
in the sense that  it depends only on quantities available from the FEM solution. Then, we will show its equivalence with the error.
Moreover, on the forthcoming analysis we will focus only on eigenvalues with simple multiplicity. With  this purpose, we introduce the following definitions and notations. For any element $T\in \CT_h$, we denote by $\CE_{T}$ the set of faces/edges of $T$
and 
$$\CE_h:=\bigcup_{T\in\CT_h}\CE_{T}.$$
We decompose $\CE_h=\CE_{\O}\cup\CE_{\partial\O}$,
where  $\CE_{\partial\O}:=\{\ell\in \CE_h:\ell\subset \partial\O\}$
and $\CE_{\O}:=\CE\backslash\CE_{\partial \O}$.
For each inner face/edge $\ell\in \CE_{\O}$ and for any  sufficiently smooth  function
$\bv$, we define the jump of its normal derivative on $\ell$ by
$$\left[\!\!\left[ \dfrac{\partial \bv}{\partial{ \boldsymbol{n}}}\right]\!\!\right]_\ell:=\nabla (\bv|_{T})  \cdot \boldsymbol{n}_{T}+\nabla ( \bv|_{T'}) \cdot \boldsymbol{n}_{T'} ,$$
where $T$ and $T'$ are  the two elements in $\CT_{h}$  sharing the
face/edge $\ell$ and $\boldsymbol{n}_{T}$ and $\boldsymbol{n}_{T'}$ are the respective outer unit normal vectors.

\subsection{Local and global indicators}
In what follows we introduce a suitable residual-based error estimator for the Oseen  eigenvalue problem. To do this task, for an element $T\in\CT_h$ we introduce the following local error indicator:
\begin{multline*}
\eta_T^2:=h_T^2\|\lambda_h\bu_h+\nu\Delta\bu_h-(\boldsymbol{\beta}\cdot\nabla)\bu_h)-\nabla p_h\|_{0,T}^2\\
+\|\div\bu_h\|_{0,T}^2+\frac{h_e}{2}\sum_{e\in\CT_h}\|\jump{(\nu\nabla \bu_h-p_h\mathbb{I})\cdot\boldsymbol{n}}\|_{0,e}^2,
\end{multline*}
and the global estimator for the primal problem
\begin{equation*}
\eta:=\left(\sum_{T\in\CT_h}\eta_T^2 \right)^{1/2}.
\end{equation*}
On the other hand, we define the local indicator for the dual problem as follows
\begin{multline*}
\eta_{T}^{*,2}:=h_T^2\|\lambda_h\bu_h + \nu\Delta\bu^*_h+\div(\bu_h^*\otimes\boldsymbol{\beta})+\nabla p_h^*\|_{0,T}^2\\
+\|\div\bu_h^*\|_{0,T}^2+\frac{h_e}{2}\sum_{e\in\CT_h}\|\jump{(\nu\nabla \bu_h^*+p_h^*\mathbb{I})\cdot\boldsymbol{n}}\|_{0,e}^2,
\end{multline*}
and the global estimator for the dual problem is 
\begin{equation*}
\eta^*:=\left(\sum_{T\in\CT_h}\eta_T^2 \right)^{1/2}.
\end{equation*}
It is  important to remark that it is possible to define a global estimator as the sum of the contributions given by the primal and dual problems as follows
\begin{equation*}
\theta:=\left(\eta^2+(\eta^*)^2\right)^{1/2}.
\end{equation*}
Now the aim is to prove that the proposed estimator is reliable and efficient. In the following, for the sake of simplicity, sometimes we will denote the errors between the continuous and discrete eigenfunctions as
$$
\eu:=\bu - \bu_{h}, \quad \ep:=p - p_h, \quad \eu^*:=\bu^* - \bu_{h}^*, \quad \ep^*:= p^* - p_h^*.
$$

\subsection{Reliability}

Let us begin with the reliability analysis of the proposed estimators. The task here is to prove that the error is upper bounded by the estimator, together with 
the high order terms. This is stated in the following result.
\begin{theorem}[Reliability]\label{th:reliability}
	The following statements hold
	\begin{enumerate}
		\item Let $(\lambda,(\bu,p))\in\mathbb{C}\times\mathcal{X}$ be a solution of the spectral problem \eqref{def:oseen_system_weak} and let $(\lambda_h,(\bu_h,p_h))\in\mathbb{C}\times\mathcal{X}_h$ be the finite element approximation of $(\lambda,(\bu,p))$ given as the solution of \eqref{def:oseen_system_weak_disc}. Then, for every $h_0\geq h$ there holds
		\begin{equation}\label{eq:reliabilityprimal}
			\|\bu-\bu_h\|_{1,\O}+\|p-p_h\|_{0,\O}\leq C(\eta+|\lambda-\lambda_h|+\lambda\|\bu-\bu_h\|_{0,\O}).
		\end{equation}
		
		\item Let $(\lambda^*,(\bu^*,p^*))\in\mathbb{C}\times\mathcal{X}'$ be a solution of the spectral problem problem \eqref{def:oseen_system_weak_dual_eigen} and let $(\lambda_h^*,(\bu_h^*,p_h^*))\in\mathbb{C}\times\mathcal{X}_h$ be the finite element approximation of $(\lambda^*,(\bu^*,p^*))$ given as the solution of \eqref{def:oseen_system_weak_dual_eigen_disc}. Then, for every $h_0\geq h$ there holds
		\begin{equation}\label{eq:reliabilitydual}
			\|\bu^*-\bu_h^*\|_{1,\O}+\|p^*-p_h^*\|_{0,\O}\leq C(\eta^*+|\lambda^*-\lambda_h^*|+\lambda^*\|\bu^*-\bu_h^*\|_{0,\O}),
		\end{equation}
	\end{enumerate}
	where in each estimate the constant $C>0$ depends on $\nu$, but is independent of the mesh size and the  discrete solutions.
\end{theorem}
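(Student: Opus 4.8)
The plan is to combine the global inf-sup stability of $A(\cdot,\cdot)$ from Lemma~\ref{lem:inf-sup-A} with element-wise integration by parts and a quasi-interpolation argument, i.e.\ the standard residual-based reliability scheme, here adapted to the non-symmetric, complex-valued setting. I will detail the primal bound \eqref{eq:reliabilityprimal}; the dual bound \eqref{eq:reliabilitydual} is obtained verbatim upon replacing $A(\cdot,\cdot)$, Lemma~\ref{lem:inf-sup-A} and the strong residual by $\widehat{A}(\cdot,\cdot)$, Lemma~\ref{lem:inf-sup-A*} and the dual equations \eqref{def:oseen-eigenvalue-dual}.

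First I would apply the stability estimate \eqref{lem:stability1} to the error pair $(\eu,\ep)\in\mathcal{X}$, which furnishes a test pair $(\bw,r)\in\mathcal{X}$ with $\|\bw\|_{1,\O}+\|r\|_{0,\O}\le C$ and
\[
\|\eu\|_{1,\O}+\|\ep\|_{0,\O}\le A((\eu,\ep);(\bw,r))=\lambda(\bu,\bw)_{0,\O}-A((\bu_h,p_h);(\bw,r)),
\]
where the last equality uses that $(\bu,p)$ solves \eqref{def:oseen_system_weak}. I would then introduce a Cl\'ement/Scott--Zhang quasi-interpolant $(\bw_h,r_h)\in\mathcal{X}_h$ of $(\bw,r)$ and invoke the discrete problem \eqref{def:oseen_system_weak_disc}, which gives $A((\bu_h,p_h);(\bw_h,r_h))=\lambda_h(\bu_h,\bw_h)_{0,\O}$. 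Rearranging and adding and subtracting $\lambda_h(\bu_h,\bw-\bw_h)_{0,\O}$ splits the right-hand side into a spectral part $\mathrm{I}:=(\lambda\bu-\lambda_h\bu_h,\bw)_{0,\O}$ and a residual part $\mathrm{II}:=\lambda_h(\bu_h,\bw-\bw_h)_{0,\O}-A((\bu_h,p_h);(\bw-\bw_h,r-r_h))$.

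For $\mathrm{I}$ I would write $\lambda\bu-\lambda_h\bu_h=\lambda\,\eu+(\lambda-\lambda_h)\bu_h$, so that $|\mathrm{I}|\le C\big(\lambda\|\eu\|_{0,\O}+|\lambda-\lambda_h|\big)$ after using $\|\bw\|_{0,\O}\le C$ and the normalization of $\bu_h$; these are exactly the last two terms in \eqref{eq:reliabilityprimal}. For $\mathrm{II}$, the genuine residual, I would expand $A((\bu_h,p_h);(\bw-\bw_h,r-r_h))=a(\bu_h,\bw-\bw_h)+b(\bw-\bw_h,p_h)-b(\bu_h,r-r_h)$ and integrate by parts on each $T\in\CT_h$. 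The diffusive and pressure volume contributions, together with the convective and mass terms, assemble into the element residual $\lambda_h\bu_h+\nu\Delta\bu_h-(\boldsymbol{\beta}\cdot\nabla)\bu_h-\nabla p_h$ tested against $\bw-\bw_h$; the interelement boundary terms collapse into the normal-flux jumps $\jump{(\nu\nabla\bu_h-p_h\mathbb{I})\cdot\boldsymbol{n}}$ (the boundary faces dropping out since $\bw-\bw_h$ vanishes on $\partial\O$); and $-b(\bu_h,r-r_h)$ yields the $\div\bu_h$ contribution. Applying Cauchy--Schwarz elementwise and on interior faces, the local approximation bounds $\|\bw-\bw_h\|_{0,T}\le C h_T\|\bw\|_{1,\omega_T}$ and $\|\bw-\bw_h\|_{0,e}\le C h_e^{1/2}\|\bw\|_{1,\omega_e}$, and the finite overlap of the patches $\omega_T,\omega_e$, I obtain $|\mathrm{II}|\le C\,\eta\,(\|\bw\|_{1,\O}+\|r\|_{0,\O})\le C\,\eta$. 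Collecting the bounds for $\mathrm{I}$ and $\mathrm{II}$ delivers \eqref{eq:reliabilityprimal}.

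The main obstacle I anticipate is the bookkeeping inside $\mathrm{II}$: the elementwise integration by parts must be organized so that the adjacent-element boundary traces combine into precisely the jump $\jump{(\nu\nabla\bu_h-p_h\mathbb{I})\cdot\boldsymbol{n}}$ appearing in $\eta_T$, and throughout the computation the complex-conjugation convention built into the dot product and the sesquilinear forms $a(\cdot,\cdot)$ and $A(\cdot,\cdot)$ must be tracked carefully, since the test function $(\bw,r)$ enters antilinearly. A secondary point is interpreting the stability bound \eqref{lem:stability1}, whose right-hand side should be read in modulus in the complex setting, so that the opening inequality reads $\|\eu\|_{1,\O}+\|\ep\|_{0,\O}\le|A((\eu,\ep);(\bw,r))|$.
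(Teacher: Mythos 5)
Your proposal is correct and follows essentially the same route as the paper: the stability pair $(\bw,r)$ from Lemma~\ref{lem:inf-sup-A} applied to $(\eu,\ep)$, a Cl\'ement-type interpolant combined with the discrete problem (equivalently, the paper's error equation \eqref{lem:reliability-eq001}), the splitting into your $\mathrm{I}$ and $\mathrm{II}$, which are exactly the paper's $\Lambda_2$ and $\Lambda_1$, elementwise integration by parts yielding the residual, jump, and divergence terms for $\mathrm{II}\le C\eta$, and the decomposition $\lambda\bu-\lambda_h\bu_h=\lambda\,\eu+(\lambda-\lambda_h)\bu_h$ for $\mathrm{I}$ (the paper delegates this to \cite[Theorem 3.1]{MR2473688}). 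The only differences are immaterial: the paper takes $r_I$ as the $\L^2$-projection rather than a quasi-interpolant (only $\L^2$-stability is needed, since $\|\div\bu_h\|_{0,T}$ carries no $h_T$-weight in $\eta_T$), and your remark that the stability bound should be read in modulus in the complex setting is a careful point the paper leaves implicit.
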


\begin{proof} It is enough to prove the reliability for the primal formulation. The dual reliability follows by similar arguments. Given $(\bv,q)\in\mathcal{X}_h$, we subtract the continuous formulation (cf. \eqref{eq:eigenA} ) and its discrete counterpart (cf. \eqref{eq:eigenA_disc}) in order to obtain the error equation
	\begin{equation}
		\label{lem:reliability-eq001}
		A((\texttt{e}_{\bu},\texttt{e}_{p});(\bv,q))=(\lambda\bu - \lambda_h\bu_h,\bv)_{0,\O}\quad\forall (\bv,q)\in\mathcal{X}_h.
	\end{equation}
	
Thanks to  Lemma \ref{lem:inf-sup-A}, we have that for $(\bu,p)\in\mathcal{X}$, there exists $(\bw,r)\in\mathcal{X}$, with
	\begin{equation}
		\label{lem:reliability-eq002}
		\Vert \bw\Vert_{1,\O}+\Vert r\Vert_{0,\O} \leq C,
	\end{equation}
	such that
	\begin{equation}
		\label{lem:reliability-eq003}
		\Vert \texttt{e}_{\bu}\Vert_{1,\O} + \Vert \texttt{e}_p \Vert_{0,\O} \leq A((\texttt{e}_{\bu},\texttt{e}_p);(\bw,r)).
	\end{equation}
	Let us define $\bw_I\in \boldsymbol{V}_h$ as the Cl\'ement interpolant (see \cite[Chapter 2]{koelink2006partial}) and $r_I\in \mathcal{P}_h$ as the usual $\L^2$-projection of $r$. Then, from \eqref{lem:reliability-eq001} and \eqref{lem:reliability-eq003} we obtain
	 \begin{multline}
		\label{lem:reliability-eq004}
		\Vert \texttt{e}_{\bu}\Vert_{1,\O} + \Vert \texttt{e}_p \Vert_{0,\O} \leq\lambda(\bu,\bw)_{0,\O}-A((\bu_h,p_h);(\bw,r)) \\
		= (\lambda\bu-\lambda_{h}\bu_h,\bw)_{0,\O}+\lambda_h(\bu_h,\bw-\bw_I)_{0,\O} - A((\bu_{h},p_h);(\bw-\bw_I,r-r_I))  \\
		= \underbrace{-A((\bu_{h},p_h);(\bw - \bw_I,r - r_I)) + (\lambda_h\bu_h,\bw-\bw_I)_{0,\O}}_{\Lambda_1}
		\\+\underbrace{ (\lambda\bu - \lambda_h\bu_h,\bw)_{0,\O}}_{\Lambda_2}.
	\end{multline}
	The task is to control $\Lambda_1$ and $\Lambda_2$. For $\Lambda_1$, we use integration by parts to obtain
	\begin{multline}
		\label{lem:reliability-eq006}
		\Lambda_1=\sum_{T\in\CT_h}\left\{\int_{T} \left(\nu\Delta\bu_h-(\boldsymbol{\beta}\cdot\nabla)\bu_h-\nabla p_h+\lambda_h\bu_h\right)\cdot (\bw-\bw_I) \right.\\
		\left.+ \int_{T}(r-r_I)\div\bu_h\right\} 	+ \frac{1}{2}\sum_{e\in\CT_h}\int_{e}\jump{\left(\nu\nabla \bu_{h}- p_h\mathbb{I}\right)\cdot\bn}\cdot(\bw-\bw_I).
	\end{multline}
	Applying the Cauchy-Schwarz inequality to \eqref{lem:reliability-eq006}, followed by Cl\'ement interpolation properties, $\L^2$ projection properties, \eqref{lem:reliability-eq002} and the estimator definition, we obtain
	$$
	\Lambda_1 \leq C\eta.
	$$
	For $\Lambda_2$ we follow \cite[Theorem 3.1]{MR2473688} and obtain the estimate
	$$
	\Lambda_2\leq |\lambda-\lambda_h|+\lambda\|\bu-\bu_h\|_{0,\O}.
	$$
	The proof of \eqref{eq:reliabilityprimal} is completed by adding the estimates $\Lambda_1$ and 
	$\Lambda_2$, together with \eqref{lem:reliability-eq004}. The proof of \eqref{eq:reliabilitydual} is obtained in a similar way.
\end{proof}

	Now we are in position to establish the estimator reliability for the eigenvalues, whose proof follows immediately by squaring the estimates from Theorem \ref{th:reliability}, Remark \ref{summaryapriori} and \cite[Remark 2.1]{gedicke2014posteriori} we have the following eigenvalue estimate.

 \begin{proposition}
	The eigenvalue approximation is such that the estimate
\begin{align*}
		&\vert \lambda - \lambda_{h}\vert \leq C_{\nu,\boldsymbol{\beta}}\left(\theta^2+ \vert \lambda - \lambda_{h}\vert^2+ \lambda^2(\Vert \bu - \bu_{h}\Vert_{0,\O}^2+\Vert \bu^* - \bu_{h}^*\Vert_{0,\O}^2)\right),\label{eq:double-order-eigen-primal}
	\end{align*}
	holds, where the constant $C_{\nu,\boldsymbol{\beta}}>0$  is independent of the meshsize and the  discrete solutions.
	\end{proposition}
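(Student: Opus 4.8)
The plan is to derive the eigenvalue estimate by combining the reliability bounds already established in Theorem \ref{th:reliability} for both the primal and dual eigenfunction errors, together with the double-order a priori convergence for the eigenvalue recorded in Remark \ref{summaryapriori}. The key observation is that the target inequality is of the same structural form as the reliability estimates, but squared and summed over the primal and dual contributions, so the bulk of the work has already been done and what remains is an algebraic assembly.

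First I would take the two reliability estimates \eqref{eq:reliabilityprimal} and \eqref{eq:reliabilitydual} from Theorem \ref{th:reliability} and square each of them. Squaring the primal estimate yields
\begin{equation*}
\|\bu-\bu_h\|_{1,\O}^2+\|p-p_h\|_{0,\O}^2\leq C\left(\eta^2+|\lambda-\lambda_h|^2+\lambda^2\|\bu-\bu_h\|_{0,\O}^2\right),
\end{equation*}
using the elementary inequality $(a+b+c)^2\leq 3(a^2+b^2+c^2)$ to distribute the square, and an analogous bound holds for the dual quantities involving $\eta^*$ and $\lambda^*$. Recalling that $\lambda=\lambda^*$ (established via integration by parts after \eqref{def:oseen_system_weak_dual_eigen}) and that $\theta^2=\eta^2+(\eta^*)^2$, adding the two squared estimates collects the estimator contributions into $\theta^2$ and the eigenfunction contributions into $\Vert\bu-\bu_h\Vert_{0,\O}^2+\Vert\bu^*-\bu_h^*\Vert_{0,\O}^2$.

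Next I would invoke the eigenvalue identity of \cite[Remark 2.1]{gedicke2014posteriori}, which for the non-symmetric spectral problem expresses $|\lambda-\lambda_h|$ in terms of a bilinear pairing of the primal and dual eigenfunction errors. The point of that remark is precisely that the eigenvalue error is controlled by the product of the primal and dual $\H^1$-type errors, so bounding each factor by the squared reliability estimate above and applying Young's inequality reproduces exactly the right-hand side of the claimed inequality: the $\theta^2$ term, the $|\lambda-\lambda_h|^2$ term, and the weighted $\L^2$ eigenfunction terms. I would absorb all mesh-independent multiplicative factors coming from $C$, the inf-sup constant $\gamma$, and the continuity constants into the single constant $C_{\nu,\boldsymbol{\beta}}$, which depends on $\nu$ and $\boldsymbol{\beta}$ but not on $h$ or the discrete solutions.

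The main obstacle I anticipate is handling the non-symmetry correctly in the step borrowed from \cite{gedicke2014posteriori}: unlike the self-adjoint Stokes case, the eigenvalue error does not reduce to a single Rayleigh-quotient expression but genuinely requires both the primal and the dual eigenfunctions, and one must be careful that the dual basis $\{\bu_k^*\}$ is normalized via $A((\bu_k,p);(\bu_l^*,p^*))=\delta_{k,l}$ as in the proof of Theorem \ref{thm:errors1} so that the pairing is well defined for simple eigenvalues. Everything else is routine squaring and summation, so once the primal-dual identity is correctly set up the conclusion follows directly.
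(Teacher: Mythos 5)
Your proposal is correct and takes essentially the same route as the paper: the paper's proof likewise starts from the primal--dual identity $(\lambda_h-\lambda)(\bu_h,\bu_h^*)_{0,\O}=A((\bu-\bu_h,p-p_h);(\bu^*-\bu_h^*,p^*-p_h^*))-\lambda(\bu-\bu_h,\bu^*-\bu_h^*)_{0,\O}$, lower-bounds the discrete pairing $(\bu_h,\bu_h^*)_{0,\O}>C$ away from zero, applies continuity of $A(\cdot\,;\cdot)$ together with Young's inequality, and then inserts the squared reliability estimates of Theorem \ref{th:reliability} to collect $\eta^2+(\eta^*)^2=\theta^2$. The only cosmetic differences are the order of assembly (you square the reliability bounds first and invoke the identity second) and that your normalization caveat about the dual basis corresponds, in the paper, to the explicit lower bound on the $\L^2$ pairing $(\bu_h,\bu_h^*)_{0,\O}$ cited from the literature.
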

	\begin{proof}
	First we note that 
	\begin{align*}
	(\l_h-\l)(\bu_h,\bu_h^*)_{0,\O}=A((\bu-\bu_h,p-p_h);(\bu^*-\bu_h^*,p^*-p_h^*))\\-\l(\bu-\bu_h,\bu^*-\bu_h^*)_{0,\O}.
	\end{align*}
	Observe that the term $(\bu_h, \bu_h^*)_{0,\O}$ is needed to be lower bounded (see for instant \cite[Theorem 3.1]{MR3212379}). 
	 Then, there exists $C > 0$ such that $(\bu_h, \bu_h^*)_{0,\O} > C$. Then, taking modulus and applying triangle inequality, we have
	 \begin{align*}
	 |\l-\l_h|\leq |A((\bu-\bu_h,p-p_h),(\bu^*-\bu_h^*,p^*-p_h^*))|+|\l(\bu-\bu_h,\bu^*-\bu_h^*)_{0,\O}|\\
	 \leq C_{\nu,\boldsymbol{\beta}}\left( \|\bu-\bu_h\|_{1,\O}^2+\|\bu^*-\bu_h^*\|_{1,\O}^2+\|p^*-p_h^*\|_{0,\O}^2+\|p-p_h\|_{0,\O}^2\right.\\
	 \left.+ \lambda(\|\bu-\bu_h\|_{0,\O}^2+\|\bu^*-\bu_h^*\|_{0,\O}^2) \right)\\
	 \leq C_{\nu,\boldsymbol{\beta}}\left(\eta^2 +(\eta^*)^2+ \vert \lambda - \lambda_{h}\vert^2+ \lambda^2(\Vert \bu - \bu_{h}\Vert_{0,\O}^2+\Vert \bu^* - \bu_{h}^*\Vert_{0,\O}^2)\right).
	 \end{align*}
	 This concludes the proof.
	\end{proof}
		It is important to note that, thanks to Remark \ref{summaryapriori}, the errors $\|\bu-\bu_h\|_{0,\O}$ (resp.  $\|\bu^*-\bu_h^*\|_{0,\O}$), $\vert \lambda - \lambda_{h}\vert^2$ and $\lambda^2\Vert \bu - \bu_{h}\Vert_{0,\O}^2$ (resp. $\lambda^2\Vert \bu^* - \bu_{h}^*\Vert_{0,\O}^2$ )  are high-order terms in the estimations of the above results.

\subsection{Efficiency}
\label{sub:bubbles}
We begin by introducing the bubble functions for two dimensional elements. Given $T\in\mathcal{T}_h$ and $e\in\mathcal{E}(T)$, we let $\psi_T$ and $\psi_e$ be the usual triangle-bubble and edge-bubble functions, respectively. See \cite{MR3059294} for further details and properties  about these functions.

\subsection{Upper bound}
In order to simplify the presentation of the material, let us define $\textbf{R}_{1,T}:=\Delta\bu_h-(\boldsymbol{\beta}\cdot\nabla)\bu_h-\nabla p_h+\lambda_h\bu_h$. Also, we define $\bv_T:=\psi_T \textbf{R}_{1,T}$, where $\psi_T$ is the bubble that satisfies  the classical properties of bubbles. Now we compute a bound for the term $\|\textbf{R}_{1,T}\|_{0,T}$. To do this task, let us recall that 
the continuous problem satisfies $-\nu\Delta\bu+(\boldsymbol{\beta}\cdot\nabla)\bu+\nabla p-\lambda\bu=\boldsymbol{0}$. Hence
\begin{multline*}
\|\textbf{R}_{1,T}\|_{0,T}^2\leq\int_T(\nu\Delta\bu_h-(\boldsymbol{\beta}\cdot\nabla)\bu_h-\nabla p_h+\lambda_h\bu_h)\cdot\bv_T\\
=\int_T(-\nu\Delta\texttt{e}_{\bu}+(\boldsymbol{\beta}\cdot\nabla)\texttt{e}_{\bu}+\nabla\texttt{e}_p+\lambda_h\bu_h-\lambda\bu)\cdot\bv_T\\
=\underbrace{\int_T(-\nu\Delta\texttt{e}_{\bu}+(\boldsymbol{\beta}\cdot\nabla)\texttt{e}_{\bu}+\nabla\texttt{e}_p)\cdot\bv_T}_{\textrm{T}_1}+\underbrace{\int_T(\lambda_h\bu_h-\lambda\bu)\cdot\bv_T}_{\textrm{T}_2}.
\end{multline*}
Now our task is to estimate the terms $\textrm{T}_1$ and $\textrm{T}_2$. Hence we have
\begin{multline*}
\textrm{T}_1=\nu\int_T\nabla\texttt{e}_{\bu}:\nabla\bv_T+\int_T(\boldsymbol{\beta}\cdot\nabla)\texttt{e}_{\bu}\bv_T-\int_{T}\texttt{e}_p\div\bv_T\\
\leq\max\{\nu,\|\boldsymbol{\beta}\|_{\infty,T},1\}(\|\nabla\texttt{e}_{\bu}\|_{0,T}+\|\texttt{e}_p\|_{0,T})\|\nabla\bv_T\|_{0,T}\\
\leq\max\{\nu,\|\boldsymbol{\beta}\|_{\infty,T},1\}(\|\nabla\texttt{e}_{\bu}\|_{0,T}+\|\texttt{e}_p\|_{0,T})h_T^{-1}\|\textbf{R}_{1,T}\|_{0,T}.
\end{multline*}
Now for $\textrm{T}_2$ is enough to follow the arguments on \cite[Theorem 3.2]{MR2473688} to obtain
\begin{equation*}
\textrm{T}_2\lesssim h_T^2(|\lambda-\lambda_h|+\lambda\|\texttt{e}_{\bu}\|_{0,T})\|\textbf{R}_{1,T}\|_{0,T}.
\end{equation*}
Then
\begin{multline}\label{eq:R1T}
h_T\|\textbf{R}_{1,T}\|_{0,T}\leq C_{\nu,\boldsymbol{\beta}}\left(\|\nabla(\bu-\bu_h)\|_{0,T}+\|p-p_h\|_{0,T}\right.\\
\left.+h_T(|\lambda-\lambda_h|+\lambda\|\bu-\bu_h\|_{0,T})\right).
\end{multline}
The control of the term $\|\div\bu_h\|_{0,T}^2$ is direct:
\begin{equation*}
\label{eq:control_div}
\|\div\bu_h\|_{0,T}=\|\div\texttt{e}_{\bu}\|_{0,T}\leq \sqrt{n}\|\nabla\texttt{e}_{\bu}\|_{0,T},
\end{equation*}
where we have used the incompressibility condition $\div\bu=0$ in $\O$.

The following step is to estimate  the boundary term of the estimator $\eta$. Given $e\in\CE_{h}$, let us define $\textbf{R}_{e}:=\jump{\left(\nu\nabla \bu_{h}- p_h\mathbb{I}\right)\bn}$. Using the extension operator $L$ defined by $L : \mathcal{C}(e)\rightarrow \mathcal{C}(T )$ with $\mathcal{C}$ and $\mathcal{C}$ being the spaces of continuous functions defined on $e$ and $T$ , respectively, and using the properties of  $\psi_e$ the edge-bubble function,  we have that
\begin{multline*}
||\textbf{R}_{e}||_{0,e}^2=\int_e \psi_e L(\textbf{R}_{e})\cdot \jump{\left(\nu\nabla \bu_{h}- p_h\mathbb{I}\right)\cdot\bn}\\
=\sum_{T\in\omega_e}\int_{\partial e}\psi_e  L(\textbf{R}_{e})\cdot \jump{\left(\nu\nabla \bu_{h}- p_h\mathbb{I}\right)\cdot\bn},
\end{multline*}
where $\omega_e:=\{ T'\in CT_h: e\in \CE_{T'}\}$.
Now, using $\jump{\left(\nu\nabla \bu- p\mathbb{I}\right)\cdot\bn}=0$, and integrating by parts, we get 
\begin{multline*}
||\textbf{R}_{e}||_{0,e}^2=\sum_{T\in\omega_e}\left(\int_T\nu\nabla \texttt{e}_{\bu}:\nabla ( \psi_e L(\textbf{R}_{e}))+\int_T\texttt{e}_{p}\div(\psi_e L(\textbf{R}_{e}))\right.\\
\left.+\int_T\boldsymbol{\beta}\cdot \nabla\texttt{e}_{\bu}\cdot \psi_e L(\textbf{R}_{e})+\int_T(\lambda \bu-\lambda_h\bu_h)\cdot \psi_e L(\textbf{R}_{e})+\int_T\textbf{R}_{1,T}\cdot \psi_e L(\textbf{R}_{e})\right)\\
\leq \sum_{T\in\omega_e}C_{\nu,\boldsymbol{\beta}}\left(\|\nabla\texttt{e}_{\bu}\|_{0,T}+\|\texttt{e}_{p}\|_{0,T}+h_T\|\textbf{R}_{1,T}\|_{0,T}+h_T(|\l-\l_h|\right.\\
\left.+\|\texttt{e}_{\bu}\|_{0,T}\right)h_e^{-1/2}||\textbf{R}_{e}||_{0,e},
\end{multline*}
where we used that $h_e\leq h_T$. Combining the above result with \eqref{eq:R1T} we have that 
\begin{equation*}
h_e^{1/2}||\textbf{R}_{e}||_{0,e}\leq \sum_{T\in\omega_e}C_{\nu,\boldsymbol{\beta}}\left(\|\nabla\texttt{e}_{\bu}\|_{0,T}+\|\texttt{e}_{p}\|_{0,T}+h_T(|\l-\l_h|+\|\texttt{e}_{\bu}\|_{0,T}\right).
\end{equation*}
In summary we have proof that 
$$\eta_T^2\leq  \sum_{T\in\omega_e}C_{\nu,\boldsymbol{\beta}}\left(\|\nabla\texttt{e}_{\bu}\|_{0,T}^2+\|\texttt{e}_{p}\|_{0,T}^2+h_T^2(|\l-\l_h|^2+\|\texttt{e}_{\bu}\|_{0,T}^2\right).$$
Now, we are in a position to establish the efficiency $\eta$, which is stated in the following result.
\begin{lemma}(Efficiency) The following estimate holds 
$$\eta\leq C_{\nu,\boldsymbol{\beta}}(\|\bu-\bu_h\|_{1,\O}^2+\|p-p_h\|_{0,\O}^2+h.o.t),$$
where the constant $C_{\nu,\boldsymbol{\beta}}>0$ is independent of meshsize, $\lambda$, and the discrete solution.
\end{lemma}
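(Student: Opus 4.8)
The plan is to assemble the global bound by summing, over all elements, the local estimates already obtained for the three pieces of $\eta_T^2$. Those pieces have each been controlled individually: the volumetric residual satisfies \eqref{eq:R1T} for $h_T\|\textbf{R}_{1,T}\|_{0,T}$; the incompressibility contribution obeys $\|\div\bu_h\|_{0,T}\leq\sqrt{n}\,\|\nabla\eu\|_{0,T}$ thanks to the exact constraint $\div\bu=0$; and the normal-flux jump term $h_e^{1/2}\|\textbf{R}_e\|_{0,e}$ has been bounded by a patch sum via the edge-bubble $\psi_e$ and the extension operator $L$. Collecting these gives exactly the local patch estimate recorded just above the statement,
$$
\eta_T^2\leq C_{\nu,\boldsymbol{\beta}}\sum_{T'\in\omega_e}\left(\|\nabla\eu\|_{0,T'}^2+\|\ep\|_{0,T'}^2+h_{T'}^2\big(|\lambda-\lambda_h|^2+\lambda^2\|\eu\|_{0,T'}^2\big)\right).
$$

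The next step is to sum this inequality over all $T\in\CT_h$. The only care needed lies in passing from the patch sums $\sum_{T'\in\omega_e}$ to a genuine global sum: each element $T'$ belongs to only a uniformly bounded number of patches $\omega_e$, the bound depending solely on the shape-regularity of the family $\{\CT_h\}$. This finite-overlap property lets me replace the iterated sum by $C\sum_{T}(\cdots)$ with $C$ independent of $h$, yielding
$$
\eta^2=\sum_{T\in\CT_h}\eta_T^2\leq C_{\nu,\boldsymbol{\beta}}\left(\|\nabla\eu\|_{0,\O}^2+\|\ep\|_{0,\O}^2+h^2|\lambda-\lambda_h|^2+h^2\lambda^2\|\eu\|_{0,\O}^2\right),
$$
and bounding $\|\nabla\eu\|_{0,\O}\lesssim\|\bu-\bu_h\|_{1,\O}$ then delivers the asserted estimate, with the last two summands collected into $h.o.t.$

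Finally I would justify that these trailing terms are genuinely higher order. By the a priori rates summarized in Remark~\ref{summaryapriori}, the eigenvalue error $|\lambda-\lambda_h|$ decays like $h^{s+s^*}$ and the $\L^2$ velocity error $\|\eu\|_{0,\O}$ like $h^{1+s}$, both faster than the $\H^1$ error $\|\bu-\bu_h\|_{1,\O}=O(h^s)$ dominating the right-hand side; the extra factor $h$ in front of each only reinforces this. The main difficulty here is not analytic but one of bookkeeping: I must ensure that the constant generated by the finite-overlap argument stays independent of $h$ and that the viscosity and convection dependence is carried correctly through $C_{\nu,\boldsymbol{\beta}}$, so that the final constant depends only on $\nu$, $\boldsymbol{\beta}$ and the shape-regularity, and not on the mesh size, on $\lambda$, or on the discrete solution.
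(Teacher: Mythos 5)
Your proposal is correct and follows essentially the same route as the paper: the paper's ``proof'' of this lemma is precisely the assembly of the three local bubble-function estimates (the volumetric bound \eqref{eq:R1T}, the divergence bound, and the edge-jump bound) into the patch estimate for $\eta_T^2$, with the global summation, finite-overlap, and higher-order-term identification via Remark~\ref{summaryapriori} left implicit, exactly the steps you spell out. Your version is in fact slightly more careful than the paper's statement, since you correctly obtain $\eta^2$ on the left-hand side (the paper writes $\eta$ against squared norms, an evident typo) and you retain the factor $\lambda^2$ multiplying $\|\eu\|_{0,\O}^2$ consistently with the local estimate.
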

Finally, the efficiency for the $\eta^*$ estimator is analogous to that shown for $\eta$, so the proof is omitted. 
\section{Numerical experiments}
\label{sec:numerics}
In this section we carry out several numerical experiments to visualize the robustness and performance of the proposed schemes. The discrete eigenvalue problem have been implemented using FEniCS \cite{logg2012automated}. After computing the eigenvalues,  the rates of convergence  are calculated by using a least-square fitting. In this sense, if $\lambda_h$ is a discrete complex eigenvalue, then the rate of convergence $\alpha$ is calculated by extrapolation and the least square  fitting
$$
\lambda_{h}\approx \lambda_{\text{extr}} + Ch^{\alpha},
$$
where $\lambda_{\text{extr}}$ is the extrapolated eigenvalue given by the fitting. For convenience in handling the two- and three-dimensional plots, the representation of eigenfunctions is done using Taylor-Hood elements.

 In what follows, we denote the mesh resolution by $N$, which is connected to the mesh-size $h$ trough the relation $h\sim N^{-1}$. We also denote the number of degrees of freedom by $\texttt{dof}$, namely $\texttt{dof}=\dim(\boldsymbol{V}_h)+\dim(\mathcal{P}_h)$. The relation between $\texttt{dof}$ and the mesh size is given by $h\sim\texttt{dof}^{-1/n}$, with $n\in\{2,3\}$.
 

Let us define $\err(\lambda_i)$ as the error on the $i$-th eigenvalue, with
$$
\err(\lambda_i):=\frac{\vert \lambda_{i,h}-\lambda_{i}\vert}{|\lambda_{i}|},
$$
where $\lambda_i$ is the extrapolated value. Similarly, the effectivity indexes with respect to $\eta$, $\eta_*$ and $\theta$ and the eigenvalue $\lambda_{i,h}$ is defined, respectively, by
$$
\eff(\lambda_i):=\frac{\err(\lambda_i)}{\eta^2}, \qquad \eff_*(\lambda_i):=\frac{\err(\lambda_i)}{(\eta^*)^2},\qquad \eff_\theta(\lambda_i):=\frac{\err(\lambda_i)}{\theta^2}.
$$

In order to apply the adaptive finite element method, we shall generate a sequence of nested conforming triangulations using the loop
\begin{center}
	\Large \textrm{solve $\rightarrow$ estimate $\rightarrow$ mark $\rightarrow$ refine,} 
\end{center}
based on \cite{verfuhrt1996}:
\begin{enumerate}
	\item Set an initial mesh $\CT_{h}$.
	\item Solve \eqref{def:oseen_system_weak_disc} (resp. \eqref{def:oseen_system_weak_dual_eigen_disc}) in the actual mesh to obtain $\lambda_{h}$ and $(\bu_{h},p_h)$ (resp. $(\bu_{h}^*,p_h^*)$ ). 	
	\item Compute $\eta_T$ (resp. $\eta_T^*$) for each $T\in\CT_{h}$ using the eigenfunctions $(\bu_{h},p_h)$ (resp. $(\bu_{h}^*,p_h^*)$ ). 
	\item Use blue-green marking strategy to refine each $T'\in \CT_{h}$ whose indicator $\zeta_{T'}$ satisfies
	$$
	\zeta_{T'}\geq 0.5\max\{\zeta_{T}\,:\,T\in\CT_{h} \},
	$$
	where $\zeta_{T}\in\{\eta_{T},\eta_{T}^*,\theta_T\}$.
	\item Set $\CT_{h}$ as the actual mesh and go to step 2.
\end{enumerate}

The refinement algorithm is the one implemented by FEniCS through the command \texttt{refine}, which implements Plaza and Carey's algorithms for 2D and 3D geometries. The algorithms use local refinement of simplicial grids based on the skeleton. 

%

For the study of the estimators, we comprise each contribution of the global residual terms as
$$
\begin{aligned}
	&\mathbf{R}:=\sum_{T\in\CT_h}h_T^2\|\lambda_h\bu_h+\nu\Delta\bu_h-(\boldsymbol{\beta}\cdot\nabla)\bu_h)-\nabla p_h\|_{0,T}^2.\\
	&\mathbf{D}:=\sum_{T\in\CT_h}\|\div\bu_h\|_{0,T}^2.\\
	&\mathbf{J}:=\frac{h_e}{2}\sum_{e\in\CT_h}\|\jump{(\nu\nabla \bu_h-p_h\mathbb{I})\cdot\boldsymbol{n}}\|_{0,e}^2.
\end{aligned}
$$
for the case of $\eta$, while each contribution of $\eta^*$ are given by
$$
\begin{aligned}
	&\mathbf{R}^*:=\sum_{T\in\CT_h}h_T^2\|\lambda_h\bu_h + \nu\Delta\bu^*_h+\div(\bu_h^*\otimes\boldsymbol{\beta})+\nabla p_h^*\|_{0,T}^2.\\
	&\mathbf{D}^*:=\sum_{T\in\CT_h}\|\div\bu_h^*\|_{0,T}^2.\\
	&\mathbf{J}^*:=\frac{h_e}{2}\sum_{e\in\CT_h}\|\jump{(\nu\nabla \bu_h^*+p_h^*\mathbb{I})\cdot\boldsymbol{n}}\|_{0,e}^2.
\end{aligned}
$$ 

\subsection{A priori numerical test}
In this section we deal with numerical results obtained on two and three dimensional geometries. Convex and non-convex geometries are considered to confirm the efficiency of the proposed estimators.

\subsubsection{A 2D square}
\label{subsec:num-exp-2D-square}
Let us begin with the two-dimensional square domain $\O:=(-1,1)^2$, with $\boldsymbol{\beta}=(1,0)^{\texttt{t}}$. A total of $8$ uniform refinements are performed on each iteration and the convergence and estimator efficiency is studied.

Tables \ref{table-square2D-primal-formulation} depicts the convergence history for the four lowest computed eigenvalues in the primal. To study the performance, we have computed the eigenvalues using the higher order Taylor-Hood elements $\mathbb{P}_3-\mathbb{P}_2$. In all cases presented, a convergence rate $\mathcal{O}(h^{2(k+1)})$ is observed, where $k=0$ for mini-elements, and $k\geq 1$ for Taylor-hood elements. A slight decrease in the fourth eigenvalue for the $\mathbb{P}_3-\mathbb{P}_2$ family is observed due to machine precision. The error curves for mini-element and the lowest-order Taylor-Hood family are presented in Figure \ref{fig:square2D-error-curves}.

Altough our a posteriori error analysis is performed for the mini-element family, in this test we also study the performance of the estimator when Taylor-Hood elements are used. It is important to remark that in this case, because of the regularity requirements for the eigenfunctions, the high order terms are not negligible. However, we present in Table \ref{table-square2D-first-eigenvalue} the results of the computations for each estimator component, and we observe that in both cases, the estimator is bounded above and below when uniform refinenments are performed. Also, we note that the dual estimator tends to be four orders of magnitude smaller when using Taylor-Hood elements.
\begin{table}[t!]
	\centering 
	{\footnotesize
		\begin{center}
			\caption{Example \ref{subsec:num-exp-2D-square}. Convergence behavior of the first four lowest computed eigenvalues for the primal formulation on the square domain with homogeneous boundary conditions and the field $\boldsymbol{\beta}=(1,0)$. }
			\begin{tabular}{c c c c |c| c}
				\hline
				\hline
				 $N=20$             &  $N=30$         &   $N=40$         & $N=50$ & Order & $\lambda_{\text{extr}}$ \\ 
				\midrule
				\multicolumn{6}{c}{Mini-Element $\mathbb{P}_{1,b}-\mathbb{P}_1$}\\
				\midrule
				13.7800  &    13.6826  &    13.6498  &    13.6350  & 2.15 &    13.6107  \\
				23.6532  &    23.3545  &    23.2539  &    23.2083  & 2.15 &    23.1340  \\
				23.9263  &    23.6386  &    23.5420  &    23.4983  & 2.16 &    23.4276  \\
				33.5447  &    32.8303  &    32.5908  &    32.4827  & 2.16 &    32.3069  \\
				
				\midrule
				\multicolumn{6}{c}{Taylor-Hood $\mathbb{P}_2-\mathbb{P}_1$}\\
				\midrule	
				
				13.6100  &    13.6097  &    13.6096  &    13.6096  & 4.06 &    13.6096  \\
				23.1319  &    23.1302  &    23.1299  &    23.1298  & 4.06 &    23.1297  \\
				23.4251  &    23.4234  &    23.4231  &    23.4230  & 4.06 &    23.4230  \\
				32.3055  &    32.2996  &    32.2986  &    32.2983  & 4.05 &    32.2981  \\

				\midrule
				\multicolumn{6}{c}{Taylor-Hood $\mathbb{P}_3-\mathbb{P}_2$}\\
				\midrule
				
				13.6096  &    13.6096  &    13.6096  &    13.6096  & 6.46 &    13.6096  \\
				23.1298  &    23.1297  &    23.1297  &    23.1297  & 6.03 &    23.1297  \\
				23.4230  &    23.4230  &    23.4230  &    23.4230  & 6.01 &    23.4230  \\
				32.2982  &    32.2981  &    32.2981  &    32.2981  & 5.78 &    32.2981  \\
				
				\hline
				\hline             
			\end{tabular}
	\end{center}}
	\smallskip
	
	\label{table-square2D-primal-formulation}
\end{table}

\begin{table}[t!]
	\setlength{\tabcolsep}{3.5pt}
	\centering 
	\caption{Example \ref{subsec:num-exp-2D-square}. Lowest computed eigenvalue error history for different finite element families of $\mathbb{P}_2-\mathbb{P}_1$ and $\mathbb{P}_{1,b}-\mathbb{P}_1$ for $\bu_h$ and $p_h$, respectively, on the unit square domain $\O=(-1,1)^2$. Here, the convective velocity is set to be $\boldsymbol{\beta}=(1,0)^{\texttt{t}}$. }
	{\footnotesize\begin{tabular}{rccccccc}
			\hline\hline
			dof   &   $h$  & $\err(\lambda_1)$  &   $\eta^2$   &   $\eta_*^2$  &   $\eff(\lambda_1)$  &  $\eff_*(\lambda_1)$  &   $r(\lambda) $ \\
			\hline 
			\multicolumn{8}{c}{Taylor-Hood} \\
			\hline
			 1004 & 0.283  & $4.5667e-04  $   & $1.2523e+00 $  & $5.9123e-02 $  & $3.6467e-04 $ & $7.7241e-03 $ & $0.00 $ \\
			 3804 & 0.141  & $3.0346e-05  $   & $7.8499e-02 $  & $4.1780e-03 $  & $3.8658e-04 $ & $7.2633e-03 $ & $3.91 $ \\
			 8404 & 0.094  & $6.0671e-06  $   & $1.5723e-02 $  & $8.6079e-04 $  & $3.8588e-04 $ & $7.0482e-03 $ & $3.97 $ \\
			 14804 & 0.071  & $1.9173e-06  $   & $5.0067e-03 $  & $2.7796e-04 $  & $3.8295e-04 $ & $6.8978e-03 $ & $4.00 $ \\
			 23004 & 0.057  & $7.8043e-07  $   & $2.0546e-03 $  & $1.1521e-04 $  & $3.7984e-04 $ & $6.7739e-03 $ & $4.03 $ \\
			 33004 & 0.047  & $3.7266e-07  $   & $9.9061e-04 $  & $5.5994e-05 $  & $3.7620e-04 $ & $6.6554e-03 $ & $4.05 $ \\
			 44804 & 0.040  & $1.9832e-07  $   & $5.3409e-04 $  & $3.0390e-05 $  & $3.7133e-04 $ & $6.5260e-03 $ & $4.09 $ \\
			 58404 & 0.035  & $1.1394e-07  $   & $3.1260e-04 $  & $1.7886e-05 $  & $3.6449e-04 $ & $6.3703e-03 $ & $4.15 $ \\
			\hline
			\multicolumn{8}{c}{Mini-element}\\
			\hline
			764 & 0.283  & $5.4785e-02  $   & $1.8830e+01 $  & $1.1453e+02 $  & $2.9095e-03 $ & $4.7836e-04 $ & $0.00 $ \\
			2924 & 0.141  & $1.2369e-02  $   & $3.9700e+00 $  & $2.0629e+01 $  & $3.1156e-03 $ & $5.9960e-04 $ & $2.15 $ \\
			6484 & 0.094  & $5.2093e-03  $   & $1.6165e+00 $  & $7.4313e+00 $  & $3.2227e-03 $ & $7.0099e-04 $ & $2.13 $ \\
			11444 & 0.071  & $2.8004e-03  $   & $8.5963e-01 $  & $3.5890e+00 $  & $3.2576e-03 $ & $7.8027e-04 $ & $2.16 $ \\
			17804 & 0.057  & $1.7118e-03  $   & $5.2940e-01 $  & $2.0478e+00 $  & $3.2334e-03 $ & $8.3589e-04 $ & $2.21 $ \\
			25564 & 0.047  & $1.1299e-03  $   & $3.5756e-01 $  & $1.3006e+00 $  & $3.1601e-03 $ & $8.6876e-04 $ & $2.28 $ \\
			34724 & 0.040  & $7.8316e-04  $   & $2.5725e-01 $  & $8.8967e-01 $  & $3.0443e-03 $ & $8.8029e-04 $ & $2.38 $ \\
			45284 & 0.035  & $5.6008e-04  $   & $1.9377e-01 $  & $6.4246e-01 $  & $2.8904e-03 $ & $8.7177e-04 $ & $2.51 $ \\
			\hline
			\hline
	\end{tabular}}
	\smallskip
	
	\label{table-square2D-first-eigenvalue}
	\end{table}
	
	 \begin{figure}[t!]
		\centering
		\includegraphics[scale=0.45]{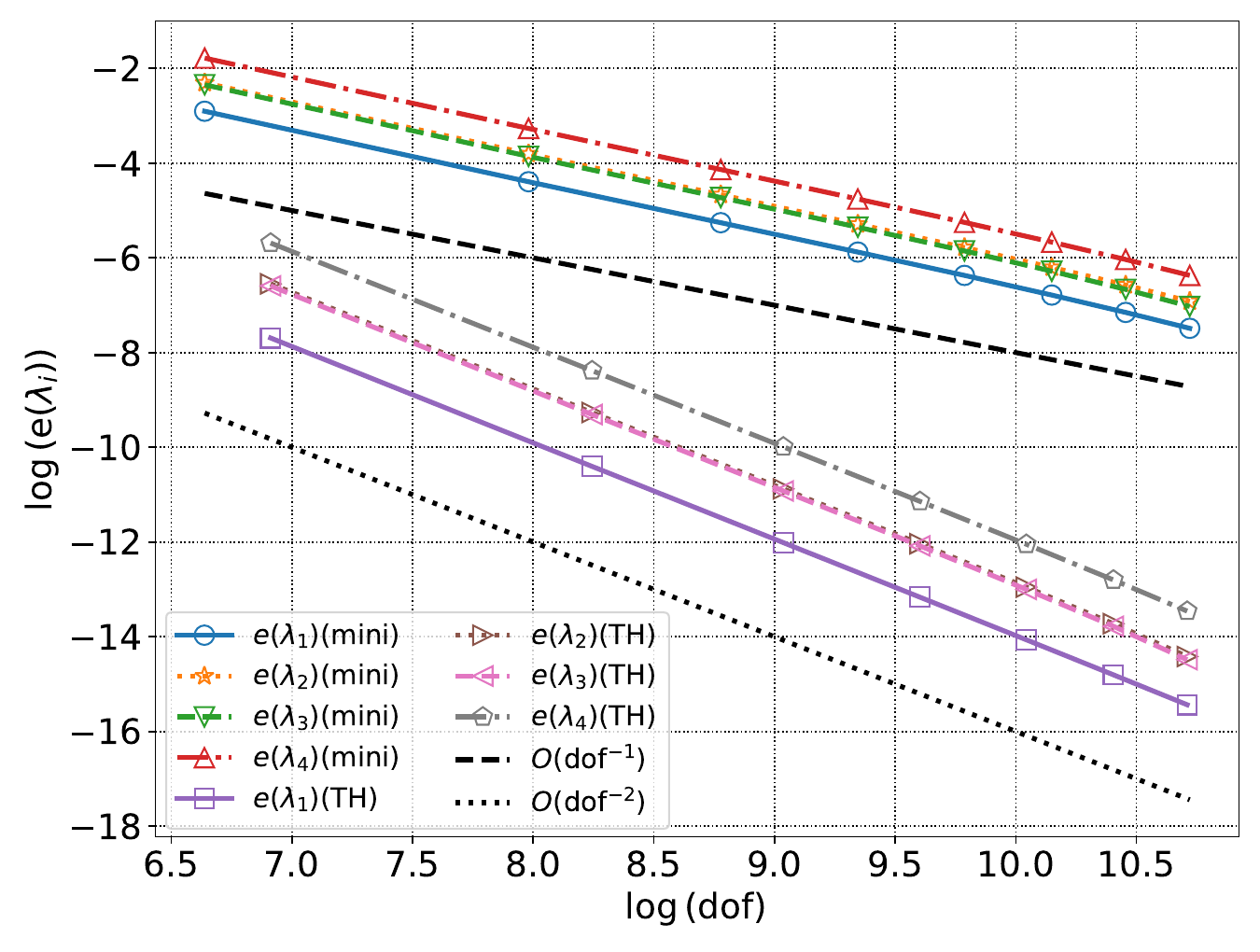}
		\caption{Example \ref{subsec:num-exp-2D-square}. Error curves for the real part of the first five lowest eigenvalues in the unit square domain. The convective velocity coefficient is set to be $\boldsymbol{\beta}=(1,0)^{\texttt{t}}$. }
		\label{fig:square2D-error-curves}
	\end{figure}
	
%
	

\subsubsection{Convergence to Stokes problem}\label{subsec:oseentostokes-test}
In this experiment we analyze the spectrum of \eqref{def:oseen_system_weak_disc} when $\Vert\boldsymbol{\beta}\Vert_{\infty,\Omega}\rightarrow 0$ in order to observe experimentally the convergence to a Stokes eigenvalue problem. We consider the same square domain as the last experiment, and the mini-element family is used for simplicity. Similar results are achieved when using Taylor-Hood elements.

A series of values of $\boldsymbol{\beta}=(2^{-i},0)^{\texttt{t}}$, for $i=0,...,15$ are considered, and we compute the first nine eigenvalues. For maximum accuracy, we set the mesh level $N=150$, implying that $\texttt{dof}=158404$. The results of this calculation are portrayed in Figure \ref{fig:oseen-to-stokes}, where we have plotted the exact spectrum of the Stokes eigenvalue problem. Here, we have considered only the primal spectrum because it behaves similar to that of the dual problem. The convergence in the limit is clearly visible.
	\begin{figure}[t!]
	\centering
	\includegraphics[scale=0.45]{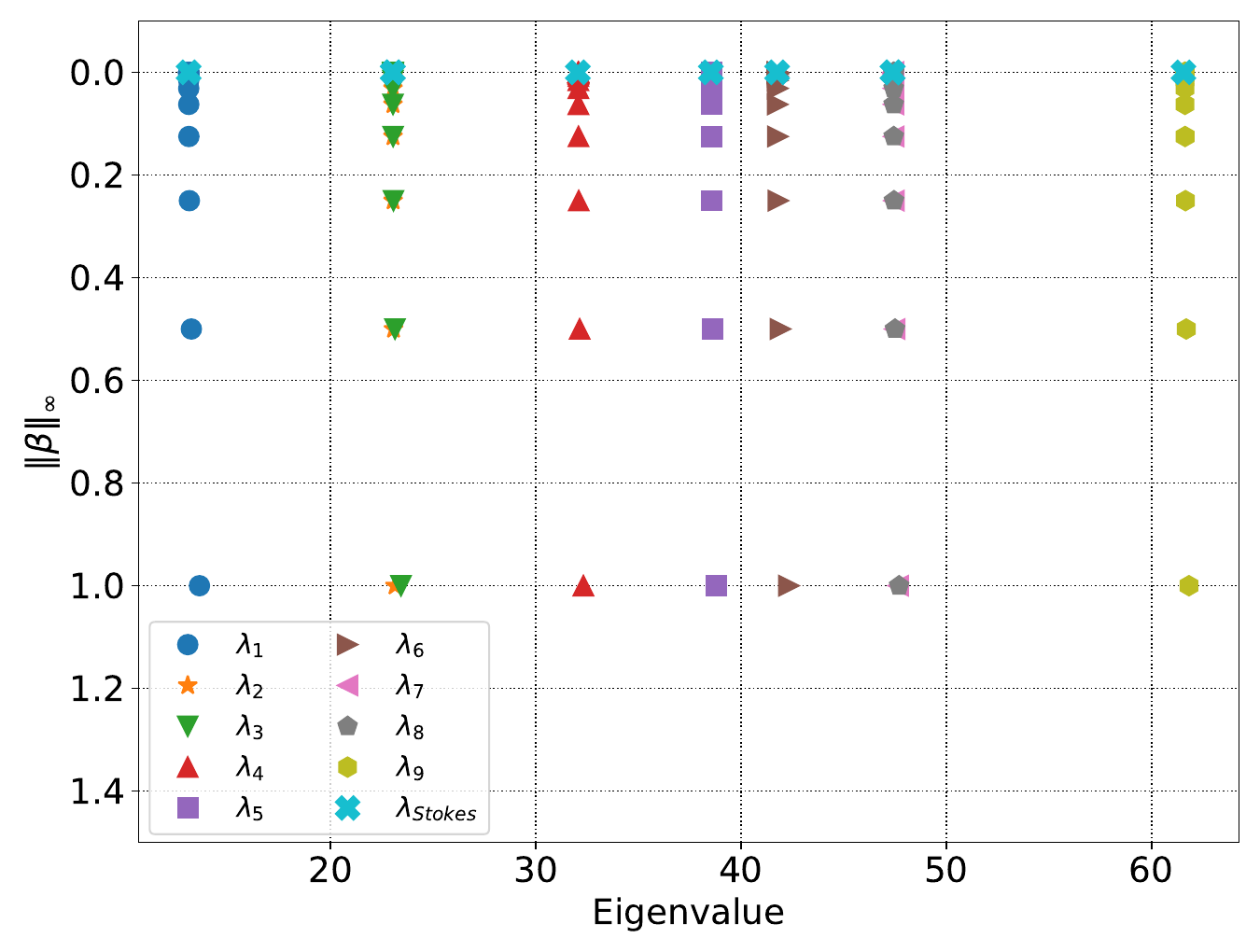}
	\caption{Example \ref{subsec:oseentostokes-test}. Comparison of the Stokes eigenvalues with the spectrum of \eqref{def:oseen_system_weak_disc} for several convective velocities such that $\Vert\boldsymbol{\beta}\Vert_{\infty,\Omega}\rightarrow 0$. }
	\label{fig:oseen-to-stokes}
\end{figure}

\subsubsection{Convergence on 3D geometries}\label{subsec:apriori-3D}
This test aims to study the perfomance of the method when considering three-dimensional geometries. A unit cube domain $\Omega_c=(0,1)^3$ and a unit radius sphere with center on the origin
$$\Omega_s:=\{(x,y,z)\in\mathbb{R}^3\;:\; x^2+y^2+z^1\leq1\},$$ 
 are considered.

A computation with several mesh levels in $\Omega_c$ is presented in Table \ref{table-cube3D-dual-formulation}. Here, we observe that a convergence rate of $\mathcal{O}(\texttt{dof}^{-0.66})\approx \mathcal{O}(h^2)$ is observed. Similar results were observed when the domain $\Omega_s$ is considered. For this case, it is important to mention that, because of the variational crime of triangulating the sphere using tetrahedrons, the best rate that we expect is $\mathcal{O}(h^2)$, which is the one observed on the table. Finally, we depict in Figure \ref{fig:3D_apriori_cube_and_sphere-velocity} the velocity fields for the cube and the sphere, accompanied with pressure surface contour plots, presented in Figure \ref{fig:3D_apriori_cube_and_sphere-pressure}.

\begin{table}[t!]
	\centering 
	{\footnotesize
		\begin{center}
			\caption{Example \ref{subsec:apriori-3D}. Convergence behavior of the first four lowest computed eigenvalues for the primal and adjoint formulation on the cube domain $\Omega_c$ with homogeneous boundary conditions and the field $\boldsymbol{\beta}=(0,0,1)^{\texttt{t}}$. Here, the mini-element family $\mathbb{P}_{1,b}-\mathbb{P}_1$ for velocity and pressure is used for the discretization.}
			\begin{tabular}{c c c c |c| c}
				\toprule
				$N=5$             &  $N=10$         &   $N=15$         & $N=20$ & Order & $\lambda_{\text{extr}}$ \\ 
				\midrule
				\multicolumn{6}{c}{Primal formulation}\\
				\midrule
				 84.6107  &    67.4096  &    64.6456  &    63.6936  & 2.30 &    62.7468  \\
				89.2399  &    68.4680  &    65.1065  &    64.0006  & 2.30 &    62.8363  \\
				89.3685  &    68.6426  &    65.2630  &    64.1266  & 2.28 &    62.9327  \\
				137.3480  &   102.5965  &    96.7467  &    94.6599  & 2.21 &    92.4487  \\
				\hline
				\hline             
			\end{tabular}
	\end{center}}
	\smallskip
	\label{table-cube3D-dual-formulation}
\end{table}

\begin{table}[t!]
	\centering 
	{\footnotesize
		\begin{center}
			\caption{Example \ref{subsec:apriori-3D}. Comparison on the convergence behavior of the first four lowest computed eigenvalues between Taylor-Hood and Mini-element families on the sphere domain $\Omega_s$ with homogeneous boundary conditions and the field $\boldsymbol{\beta}=(0,0,1)^{\texttt{t}}$.}
			\begin{tabular}{c c c c |c| c}
				\toprule
				$N=20$             &  $N=25$         &   $N=30$         & $N=35$ & Order & $\lambda_{\text{extr}}$ \\ 
				\midrule
				\multicolumn{6}{c}{Mini-Element $\mathbb{P}_{1,b}-\mathbb{P}_1$}\\
				\midrule
				21.5361  &    21.1476  &    20.9608  &    20.8696  & 2.03 &    20.6469  \\
				21.7385  &    21.3731  &    21.2010  &    21.1139  & 2.05 &    20.9101  \\
				21.7610  &    21.3783  &    21.2027  &    21.1151  & 2.14 &    20.9208  \\
				36.5358  &    35.2685  &    34.6768  &    34.3526  & 1.98 &    33.5997  \\
				\midrule
				\multicolumn{6}{c}{Taylor-Hood $\mathbb{P}_{2}-\mathbb{P}_1$}\\
				\midrule
				20.8096  &    20.7285  &    20.7044  &    20.6870  & 2.13 &    20.6489  \\
				21.0637  &    20.9839  &    20.9618  &    20.9436  & 2.13 &    20.9067  \\
				21.0660  &    20.9856  &    20.9627  &    20.9438  & 2.05 &    20.9034  \\
				33.8969  &    33.7529  &    33.7150  &    33.6825  & 2.21 &    33.6215  \\
				\hline
				\hline             
			\end{tabular}
	\end{center}}
	\smallskip
	\label{table-sphere3D-formulation}
\end{table}

\begin{figure}[!h]
	\centering
	\begin{minipage}{0.49\linewidth}
		\includegraphics[scale=0.065,trim=40cm 0cm 40cm 0cm,clip]{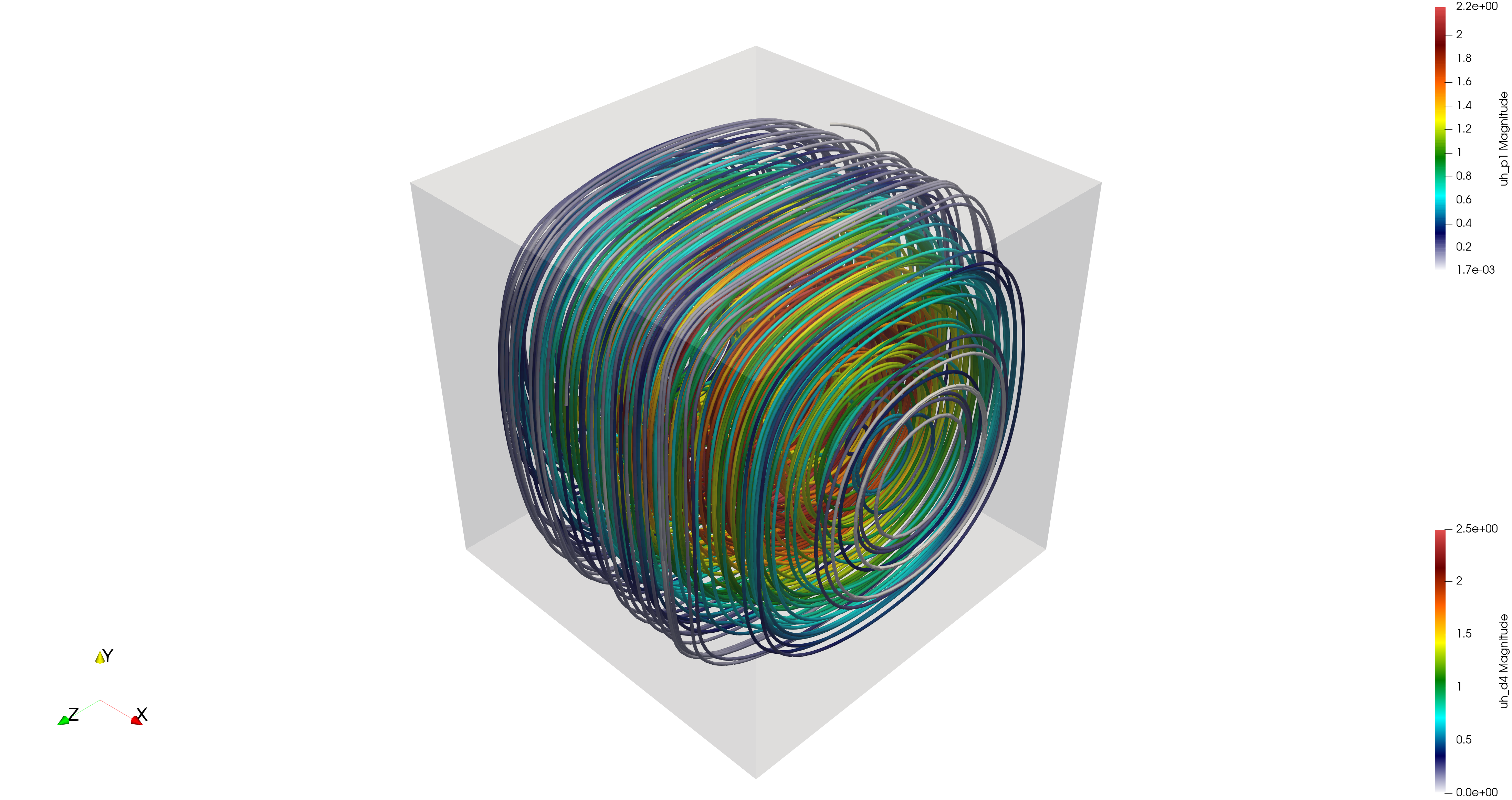}
	\end{minipage}
	\begin{minipage}{0.49\linewidth}
		\includegraphics[scale=0.060,trim=40cm 0cm 40cm 0cm,clip]{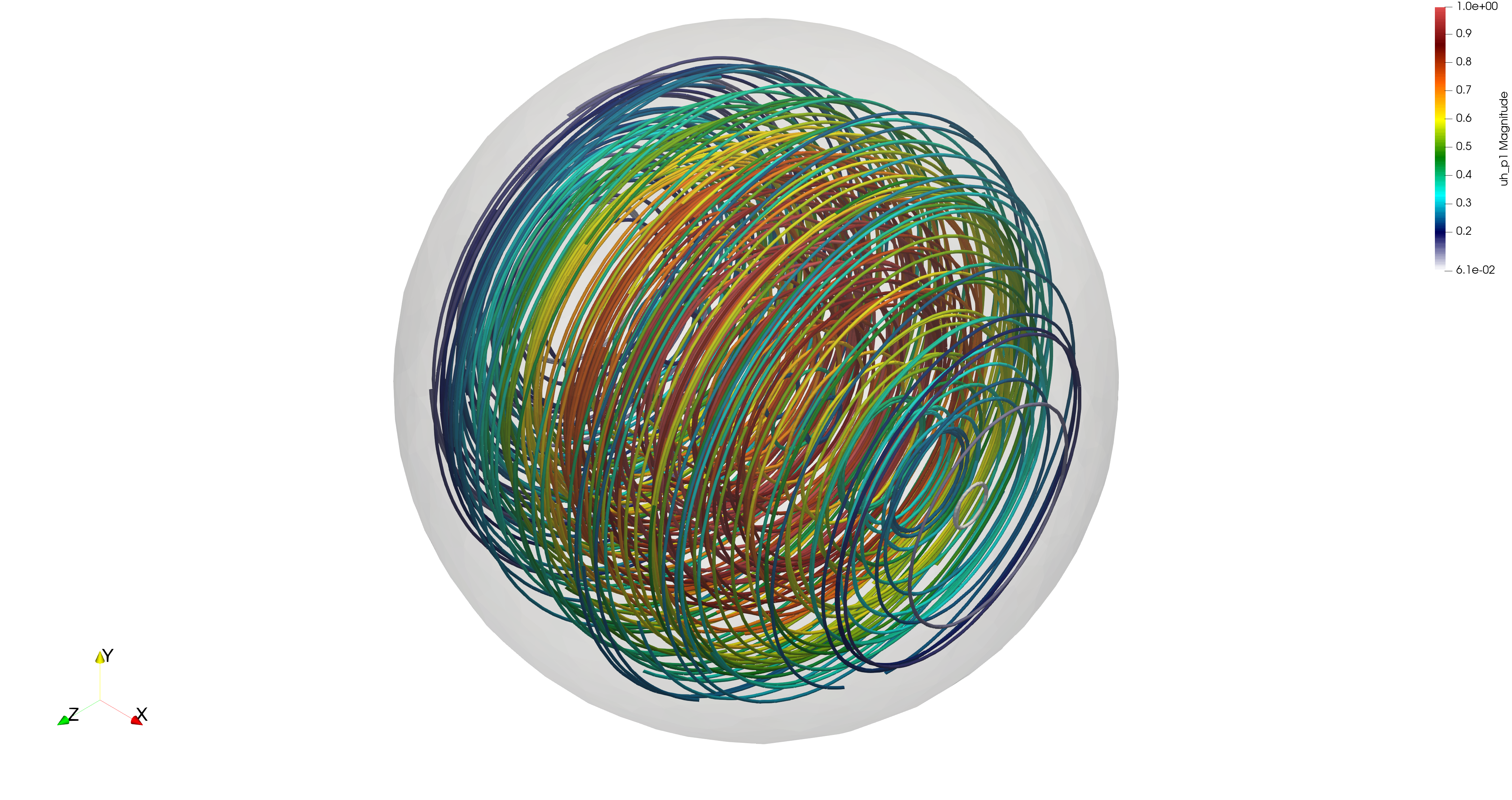}
	\end{minipage}\\
	\begin{minipage}{0.49\linewidth}
		\includegraphics[scale=0.065,trim=40cm 0cm 40cm 0cm,clip]{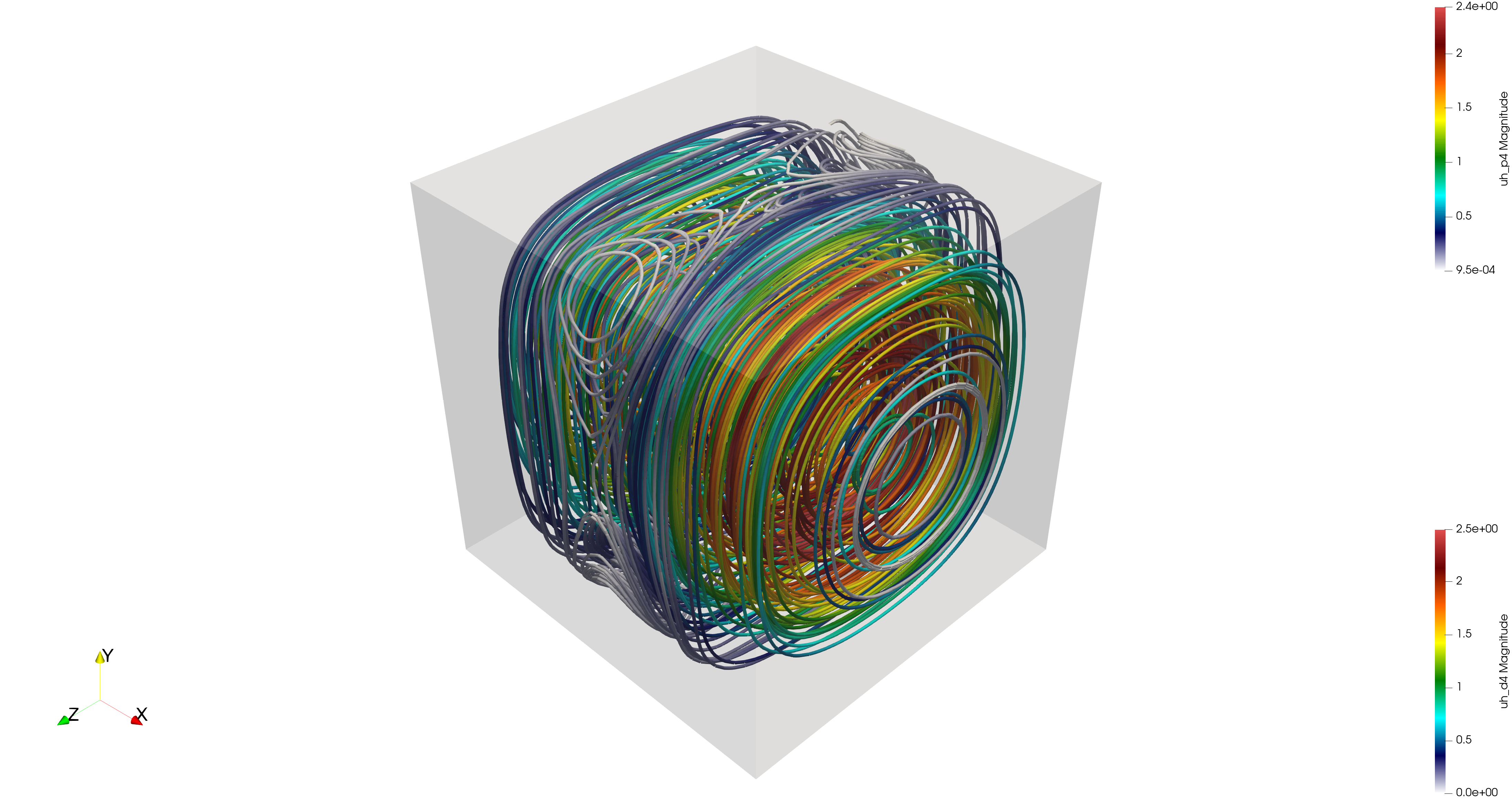}
	\end{minipage}
	\begin{minipage}{0.49\linewidth}\vspace*{0.4cm}
		\includegraphics[scale=0.060,trim=40cm 0cm 40cm 0cm,clip]{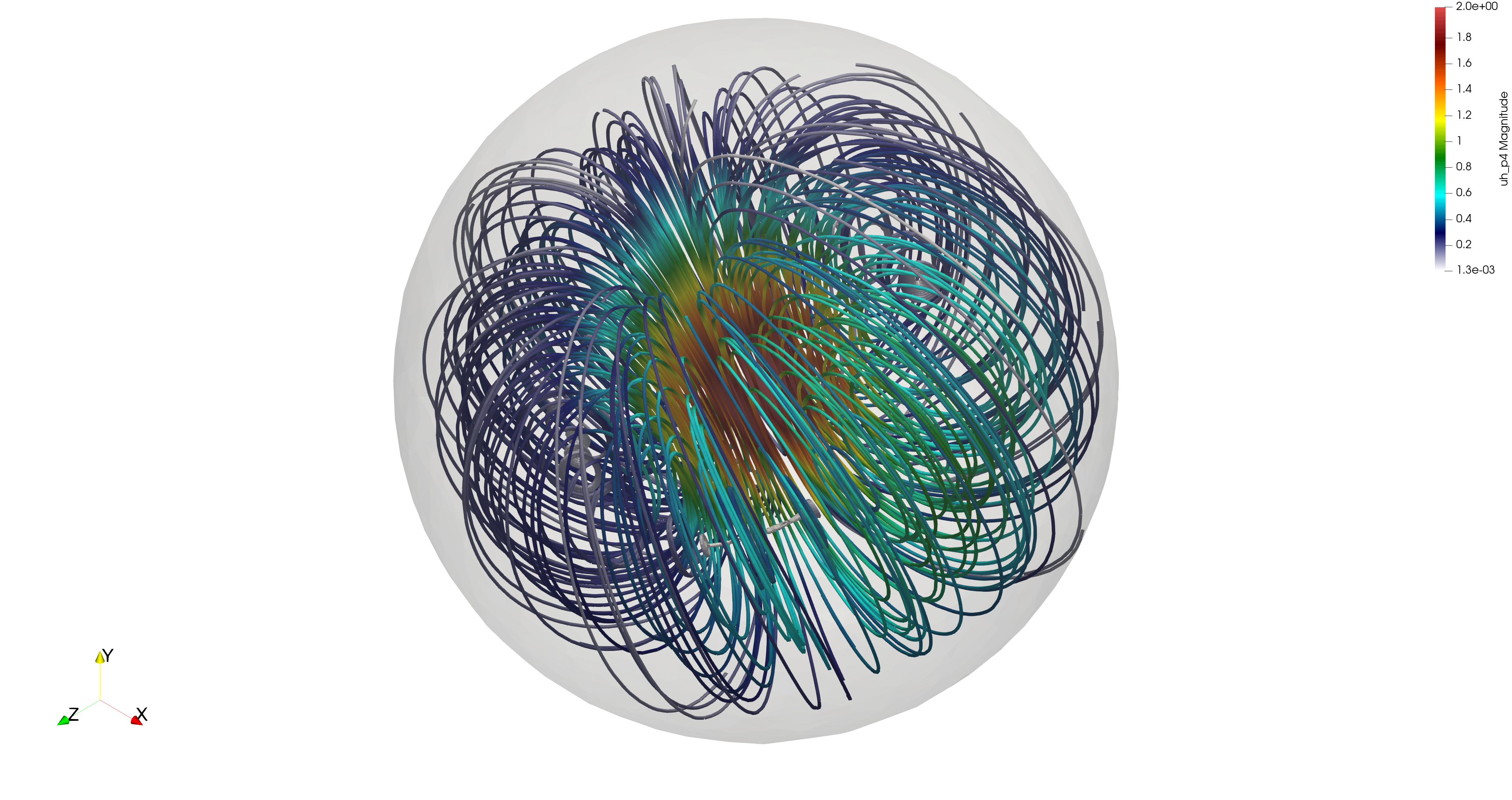}\\
	\end{minipage}
	\caption{Example \ref{subsec:apriori-3D}. Comparison between the first and four lowest velocities streamlines eigenfunctions on the domains $\Omega_c$ and $\Omega_s$. Top: $\bu_{1,h}$ on $\Omega_c$ and $\Omega_s$, respectively. Bottom: $\bu_{4,h}$ on $\Omega_c$ and $\Omega_s$, respectively.}
	\label{fig:3D_apriori_cube_and_sphere-velocity}
\end{figure}

\begin{figure}[!h]
	\centering
	\begin{minipage}{0.49\linewidth}
		\includegraphics[scale=0.065,trim=40cm 0cm 40cm 0cm,clip]{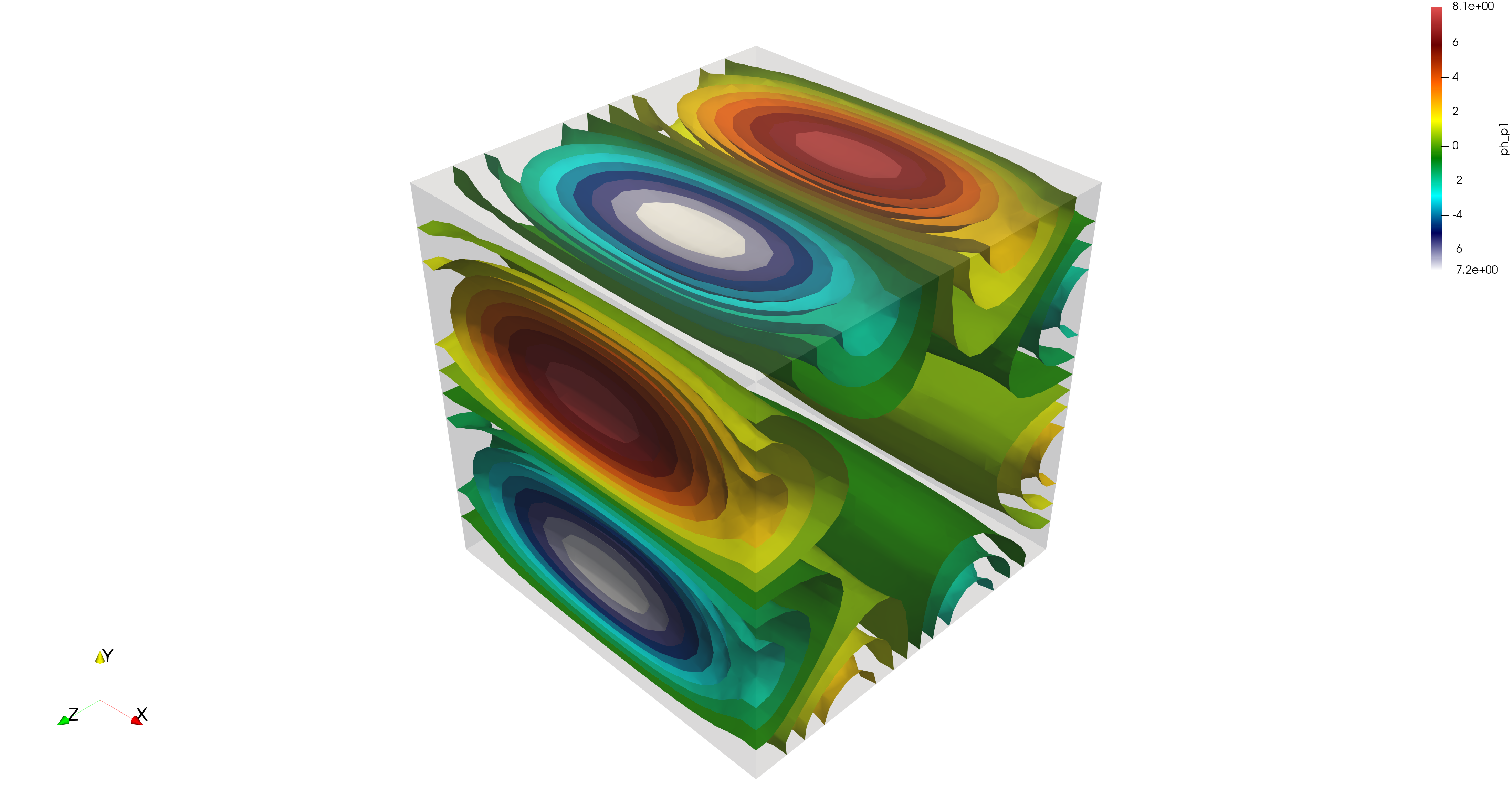}
	\end{minipage}
	\begin{minipage}{0.49\linewidth}
		\includegraphics[scale=0.060,trim=40cm 0cm 40cm 0cm,clip]{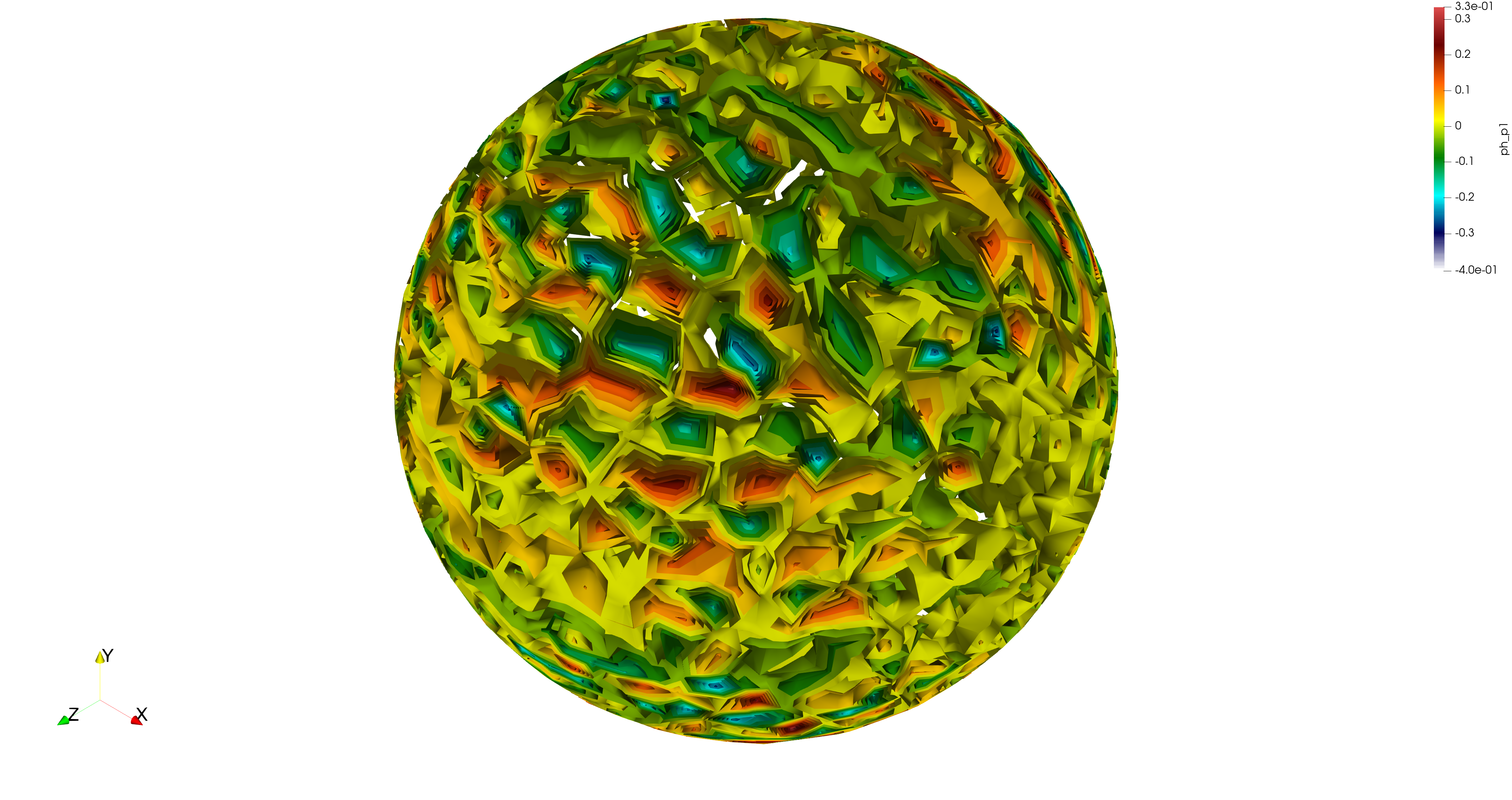}
	\end{minipage}\\
	\begin{minipage}{0.49\linewidth}
		\includegraphics[scale=0.065,trim=40cm 0cm 40cm 0cm,clip]{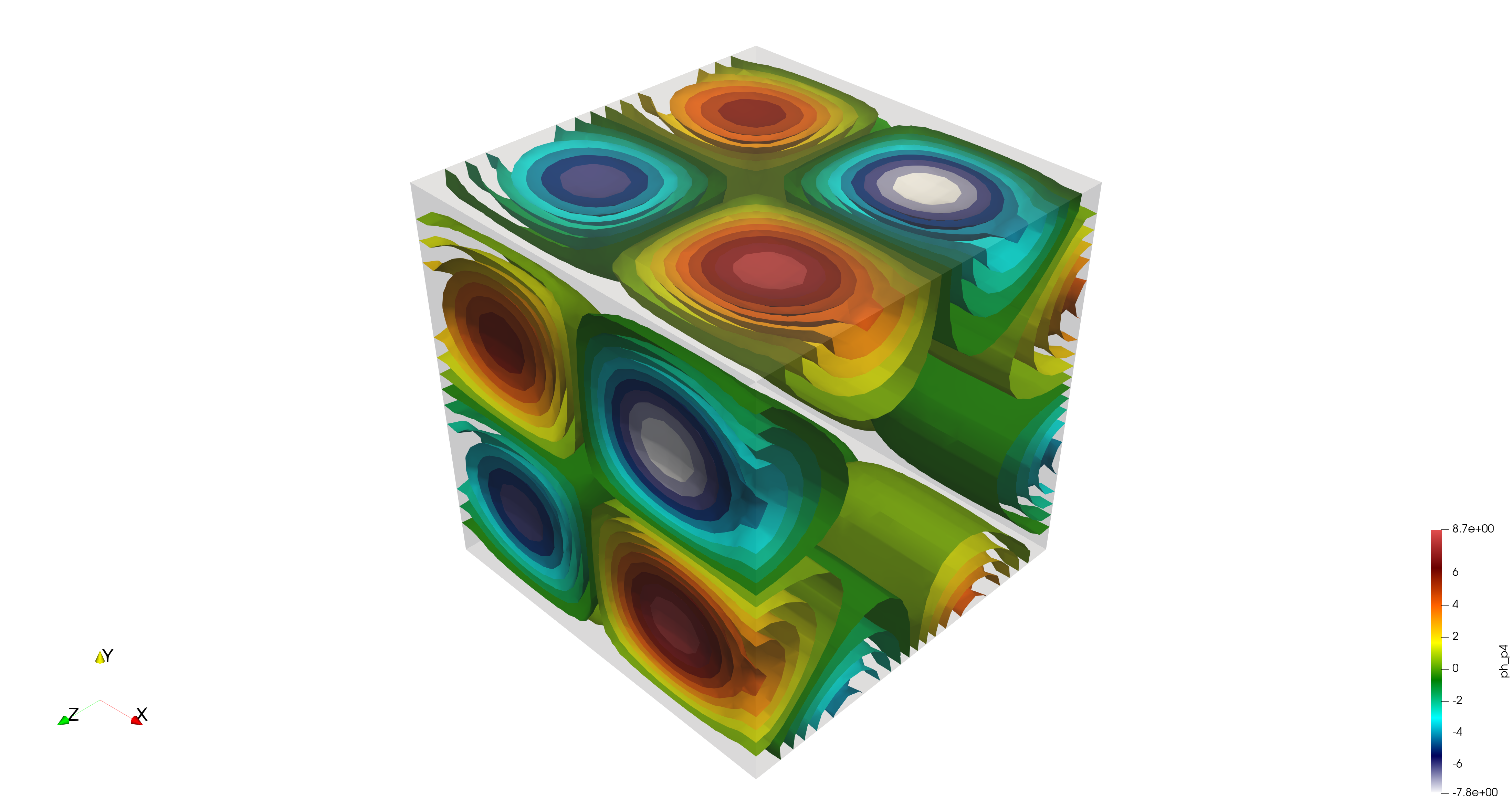}
	\end{minipage}
	\begin{minipage}{0.49\linewidth}\vspace*{0.4cm}
		\includegraphics[scale=0.060,trim=40cm 0cm 40cm 0cm,clip]{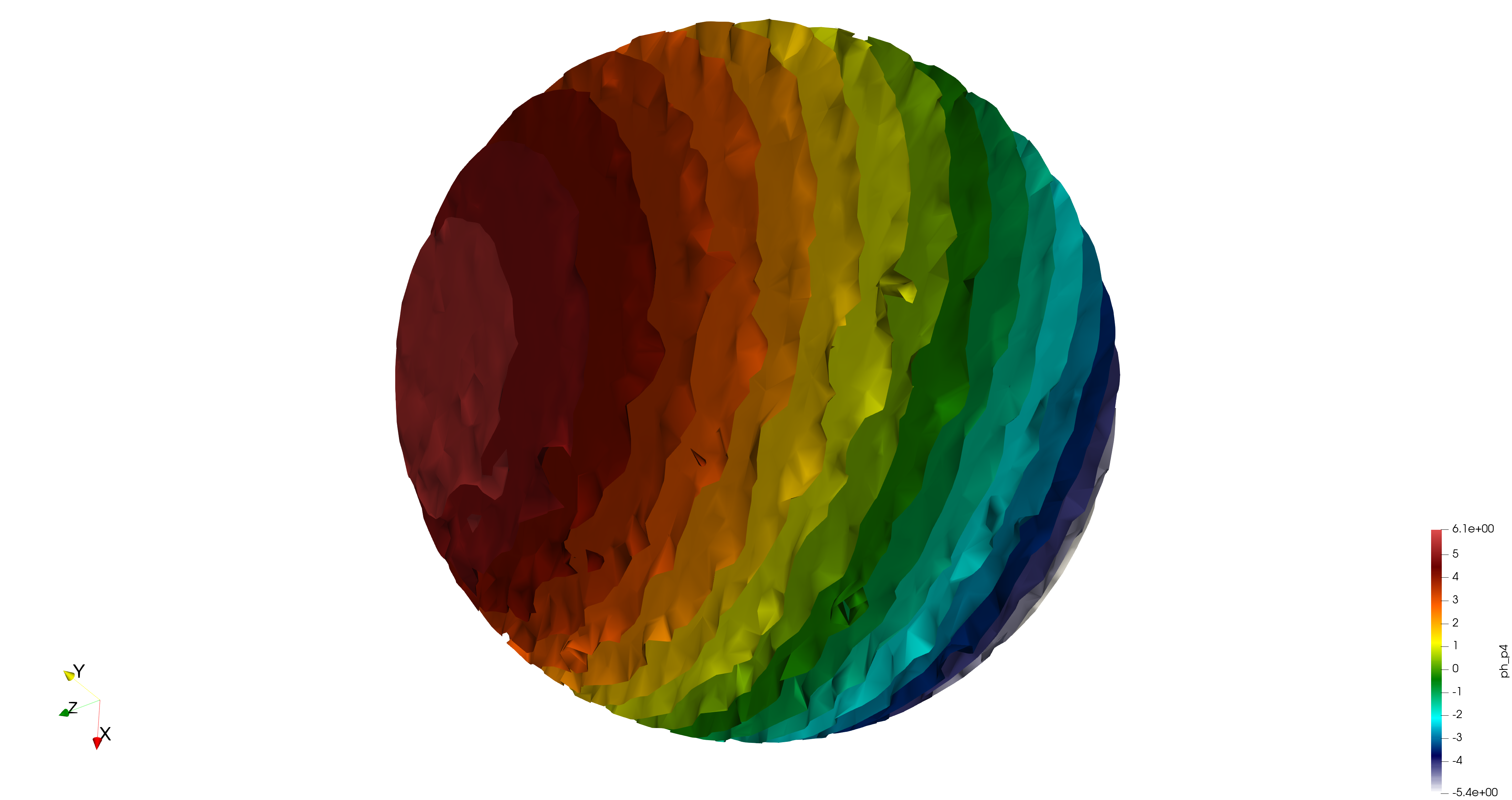}\\
	\end{minipage}
	\caption{Example \ref{subsec:apriori-3D}. Comparison between the first and fourth lowest pressure eigenfunctions contour plot on the domains $\Omega_c$ and $\Omega_s$. Top: $p_{1,h}$ on $\Omega_c$ and $\Omega_s$, respectively. Bottom: $p_{4,h}$ on $\Omega_c$ and $\Omega_s$, respectively.}
	\label{fig:3D_apriori_cube_and_sphere-pressure}
\end{figure}

\subsection{A posteriori test}
This section is devoted to study the performance of the estimator when domains with singularities are considered in two and three dimensions. All the test are implemented using the mini-element family in order to observe the recovery of the optimal rate of convergence on each case.

\subsubsection{A 2D L-shaped domain}\label{subsec:lshape2D}
This experiment aims to study the adaptive algorithm in a 2D domain with a reentrant corner. The domain under consideration is the usual L-shaped domain $\Omega:=(-1,1)^2\backslash(-1,0)^2$. In this particular domain, the regularity of the first eigenmode decreases, hence we expect a rate of  $\mathcal{O}(h^{s})$, with $s\geq 1.32$ if uniform refinement are used. The extrapolated eigenvalues taken as the exact solution for the primal and dual problem is given by 
$$\lambda_{1}=32.963150646072528$$
Below we present the performance of our estimator on this domain in the recovery of the optimal rate of convergence $\mathcal{O}(h^2)$. The error, volume and jumps contributions are presented in Tables \ref{table-lshape2D-first-eigenvalue-primal} -- \ref{table-lshape2D-first-eigenvalue-dual}. Here, we note that for 15 iterations, the primal estimator tends to mark more elements than the dual formulation. This behavior is expected because of the shift in the velocity eigenmode, depicted in Figure \ref{fig:lshape2D-uh_ph}, where we also observe that the pressure is singular near $(x,y)=(0,0)$. An example of the estimators performance is depicted in Figure \ref{fig:lshape2D-meshes}, where refinements near the singularity are evident. We also note that $\mathbf{R}$ and $\mathbf{J}$ (resp. $\mathbf{R}^*$ and $\mathbf{J}^*$) are the quantities that contribute the most to the estimator $\eta$ (resp. $\eta^*$). In both cases, the effectivity index remains bounded. As comparison, we present the error curves and effectivity indexes in Figure \ref{fig:lshape2D-error}, where an experimental rate $\mathcal{O}(\texttt{dof}^{-1})$ is observed for both problems. 
\begin{table}[t!]
	\setlength{\tabcolsep}{3.5pt}
	\centering 
	\caption{Example \ref{subsec:lshape2D}. Comparison of the error history of the lowest computed primal eigenvalue with the global residual terms and $\eta^2$, on the two-dimensional L-shaped domain. Here, the convective velocity is set to be $\boldsymbol{\beta}=(1,0)^{\texttt{t}}$. }
	{\footnotesize\begin{tabular}{rccccccc}
			\hline\hline
			dof   &   $\err(\lambda_1)$  &   $\mathbf{R}$   &   $\mathbf{D}$  &  $\mathbf{J}$& $\eta^2$&  $\eff(\lambda_1)$  \\
			\hline 
			\hline
			388 &  $1.0203e+01  $ & $1.1643e+02  $   & $3.8551e+00 $  & $1.9211e+02 $  & $3.1239e+02 $ & $5.3558e-03 $ \\
			778 &  $3.3535e+00  $ & $2.2805e+01  $   & $1.2871e+00 $  & $5.6211e+01 $  & $8.0303e+01 $ & $7.8040e-03 $ \\
			1238 &  $1.8747e+00  $ & $1.0866e+01  $   & $7.4111e-01 $  & $3.2443e+01 $  & $4.4050e+01 $ & $9.2355e-03 $ \\
			2193 &  $1.0324e+00  $ & $5.3005e+00  $   & $3.9367e-01 $  & $1.7487e+01 $  & $2.3181e+01 $ & $1.0952e-02 $ \\
			3293 &  $6.6180e-01  $ & $3.2094e+00  $   & $2.4977e-01 $  & $1.0912e+01 $  & $1.4371e+01 $ & $1.1285e-02 $ \\
			4513 &  $4.9614e-01  $ & $2.1453e+00  $   & $1.8164e-01 $  & $8.0998e+00 $  & $1.0427e+01 $ & $1.2660e-02 $ \\
			7076 &  $3.2336e-01  $ & $1.3290e+00  $   & $1.1818e-01 $  & $5.2945e+00 $  & $6.7417e+00 $ & $1.3080e-02 $ \\
			10512 &  $2.1595e-01  $ & $8.1067e-01  $   & $8.0439e-02 $  & $3.4716e+00 $  & $4.3627e+00 $ & $1.4245e-02 $ \\
			14077 &  $1.4979e-01  $ & $5.6845e-01  $   & $5.6494e-02 $  & $2.4521e+00 $  & $3.0770e+00 $ & $1.4866e-02 $ \\
			19517 &  $1.0983e-01  $ & $4.0271e-01  $   & $4.1821e-02 $  & $1.8174e+00 $  & $2.2619e+00 $ & $1.5598e-02 $ \\
			30191 &  $7.2192e-02  $ & $2.5515e-01  $   & $2.6907e-02 $  & $1.1857e+00 $  & $1.4678e+00 $ & $1.5291e-02 $ \\
			43989 &  $4.6942e-02  $ & $1.6744e-01  $   & $1.8034e-02 $  & $7.9266e-01 $  & $9.7814e-01 $ & $1.6249e-02 $ \\
			61639 &  $3.1572e-02  $ & $1.1828e-01  $   & $1.2386e-02 $  & $5.4936e-01 $  & $6.8003e-01 $ & $1.6454e-02 $ \\
			85992 &  $2.2001e-02  $ & $8.5341e-02  $   & $9.0940e-03 $  & $4.0362e-01 $  & $4.9806e-01 $ & $1.6679e-02 $ \\
			131770 &  $1.3896e-02  $ & $5.3835e-02  $   & $6.0354e-03 $  & $2.6681e-01 $  & $3.2668e-01 $ & $1.6552e-02 $ \\
			\hline
			\hline
	\end{tabular}}
	\smallskip
	\label{table-lshape2D-first-eigenvalue-primal}
\end{table}

\begin{table}[t!]
	\setlength{\tabcolsep}{3.5pt}
	\centering 
	\caption{Example \ref{subsec:lshape2D}. Comparison of the error history of the lowest computed dual eigenvalue with the global residual terms and $(\eta^*)^2$, on the two-dimensional L-shaped domain. Here, the convective velocity is set to be $\boldsymbol{\beta}=(1,0)^{\texttt{t}}$. }
	{\footnotesize\begin{tabular}{rccccccc}
			\hline\hline
			dof   &   $\err_*(\lambda_1)$  &   $\mathbf{R}^*$   &   $\mathbf{D}^*$  &  $\mathbf{J}^*$& $(\eta^*)^2$&  $\eff_*(\lambda_1)$  \\
			\hline 
			\hline
			388 &  $1.0202e+01  $   & $1.7203e+03  $   & $4.0861e+00 $  & $1.8056e+02 $  & $1.9049e+03 $ & $5.3561e-03$ \\
			674 &  $4.6447e+00  $   & $5.2107e+02  $   & $1.7125e+00 $  & $7.2388e+01 $  & $5.9517e+02 $ & $5.6345e-03$ \\
			860 &  $3.6255e+00  $   & $3.3621e+02  $   & $1.2901e+00 $  & $5.5063e+01 $  & $3.9256e+02 $ & $4.7756e-03$ \\
			1220 &  $2.6282e+00  $   & $2.0174e+02  $   & $9.5675e-01 $  & $3.7278e+01 $  & $2.3997e+02 $ & $4.3022e-03$ \\
			1828 &  $1.5533e+00  $   & $1.1351e+02  $   & $5.8244e-01 $  & $2.3551e+01 $  & $1.3765e+02 $ & $4.8080e-03$ \\
			2610 &  $1.1142e+00  $   & $7.0622e+01  $   & $4.3224e-01 $  & $1.6953e+01 $  & $8.8007e+01 $ & $5.6375e-03$ \\
			3600 &  $8.0170e-01  $   & $4.8529e+01  $   & $3.2416e-01 $  & $1.2442e+01 $  & $6.1294e+01 $ & $5.2756e-03$ \\
			5537 &  $5.2732e-01  $   & $2.9537e+01  $   & $1.8703e-01 $  & $7.2933e+00 $  & $3.7018e+01 $ & $5.8338e-03$ \\
			7672 &  $3.4993e-01  $   & $1.8393e+01  $   & $1.2911e-01 $  & $5.0176e+00 $  & $2.3539e+01 $ & $6.3632e-03$ \\
			10257 &  $2.5877e-01  $   & $1.2658e+01  $   & $9.8517e-02 $  & $3.8336e+00 $  & $1.6590e+01 $ & $6.6203e-03$ \\
			14227 &  $1.8628e-01  $   & $9.2889e+00  $   & $7.3963e-02 $  & $2.8197e+00 $  & $1.2183e+01 $ & $5.9259e-03$ \\
			20480 &  $1.2639e-01  $   & $5.9365e+00  $   & $4.7464e-02 $  & $1.7944e+00 $  & $7.7784e+00 $ & $6.0349e-03$ \\
			27693 &  $9.1463e-02  $   & $4.1882e+00  $   & $3.5170e-02 $  & $1.3354e+00 $  & $5.5588e+00 $ & $5.6798e-03$ \\
			38543 &  $6.5957e-02  $   & $2.9215e+00  $   & $2.6242e-02 $  & $1.0068e+00 $  & $3.9545e+00 $ & $5.5637e-03$ \\
			52762 &  $4.8409e-02  $   & $2.1559e+00  $   & $1.9560e-02 $  & $7.4915e-01 $  & $2.9246e+00 $ & $4.7514e-03$ \\
			\hline
			\hline
	\end{tabular}}
	\smallskip
	\label{table-lshape2D-first-eigenvalue-dual}
\end{table}
\begin{figure}[!h]
	\centering
	\begin{minipage}{0.32\linewidth}
		\includegraphics[scale=0.05,trim= 37cm 1cm 37cm 1cm, clip]{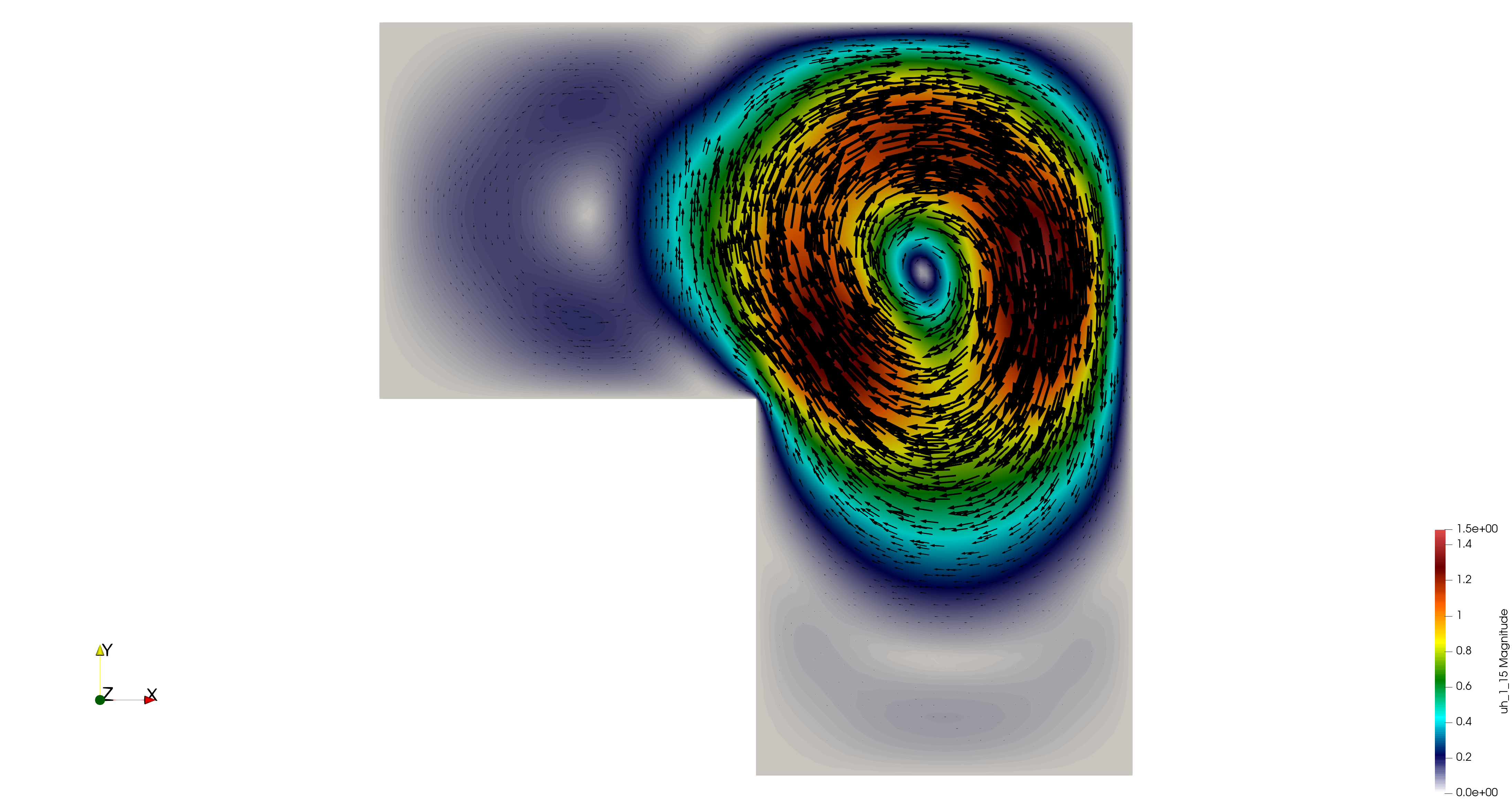}
	\end{minipage}
	\begin{minipage}{0.32\linewidth}
		\includegraphics[scale=0.05,trim= 37cm 1cm 37cm 1cm, clip]{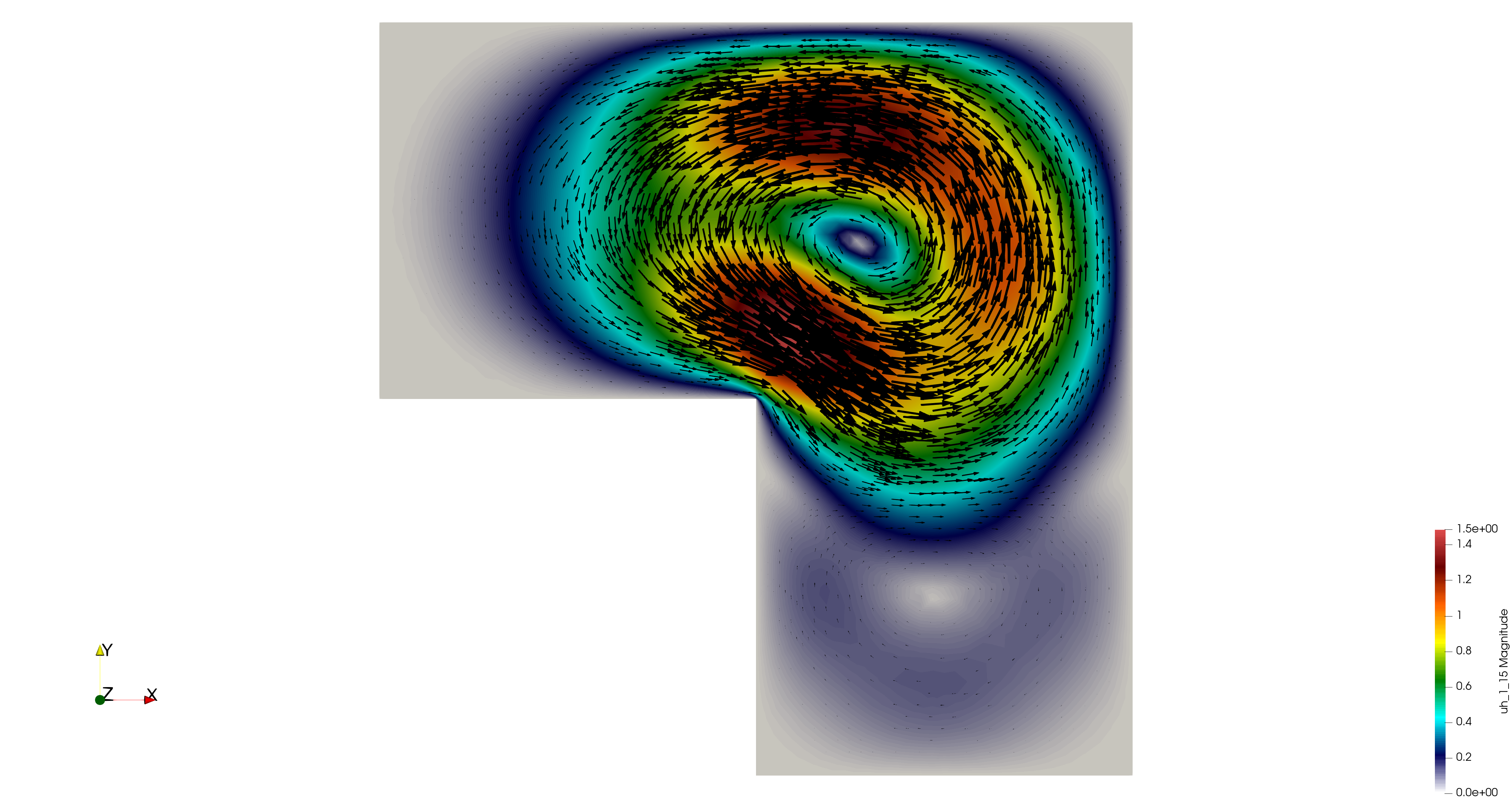}
	\end{minipage}
	\begin{minipage}{0.32\linewidth}
		\includegraphics[scale=0.05,trim= 37cm 1cm 37cm 1cm, clip]{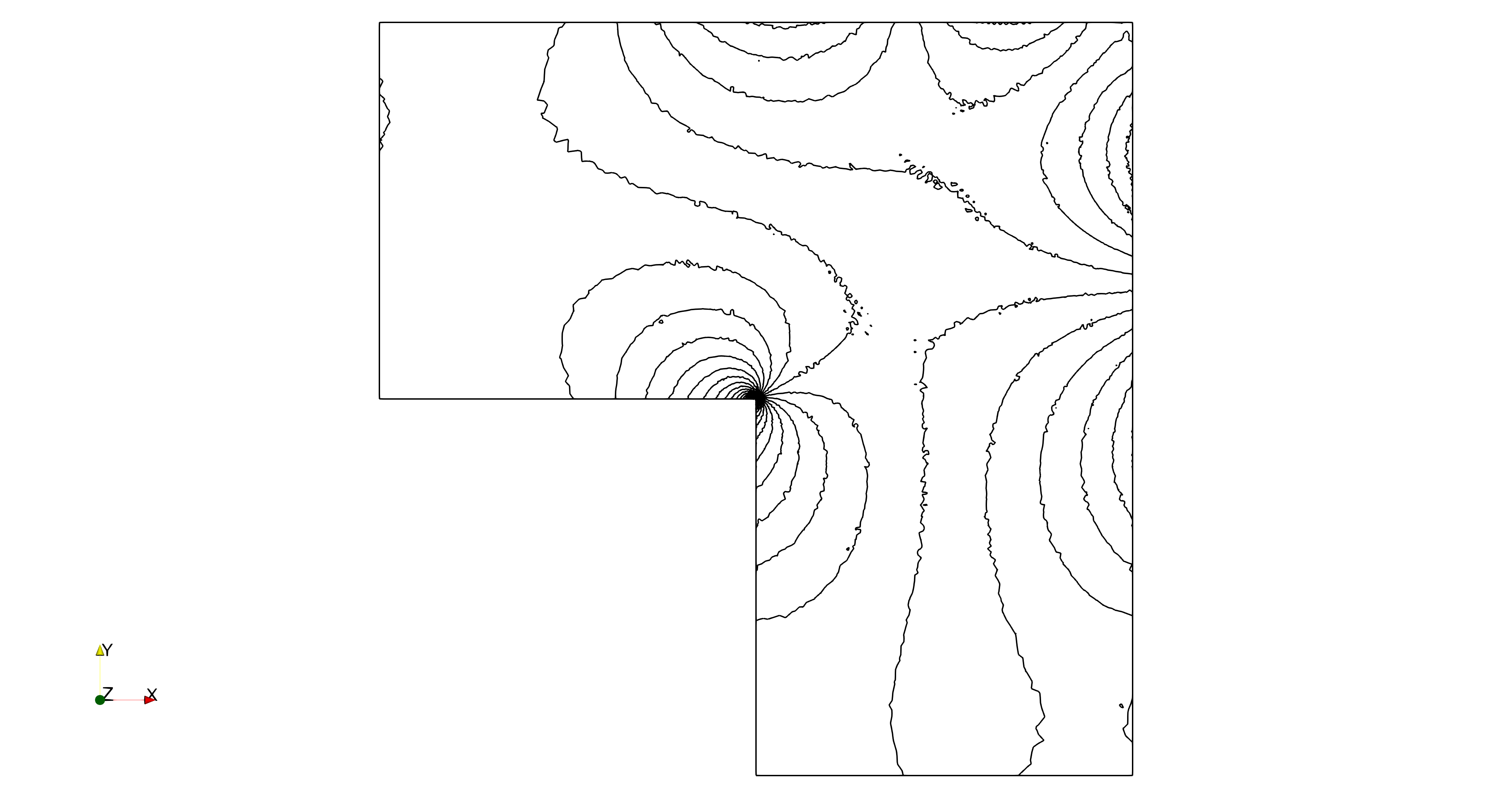}
	\end{minipage}\\
	\caption{Example \ref{subsec:lshape2D}. Velocity fields (left and middle) for the lowest order computed eigenmodes for the primal and dual problems, respectively, together with the corresponding singular pressure contour plot (right).}
	\label{fig:lshape2D-uh_ph}
\end{figure}
\begin{figure}[!h]
	\centering
	\begin{minipage}{0.32\linewidth}
		\includegraphics[scale=0.05,trim= 37cm 1cm 37cm 1cm, clip]{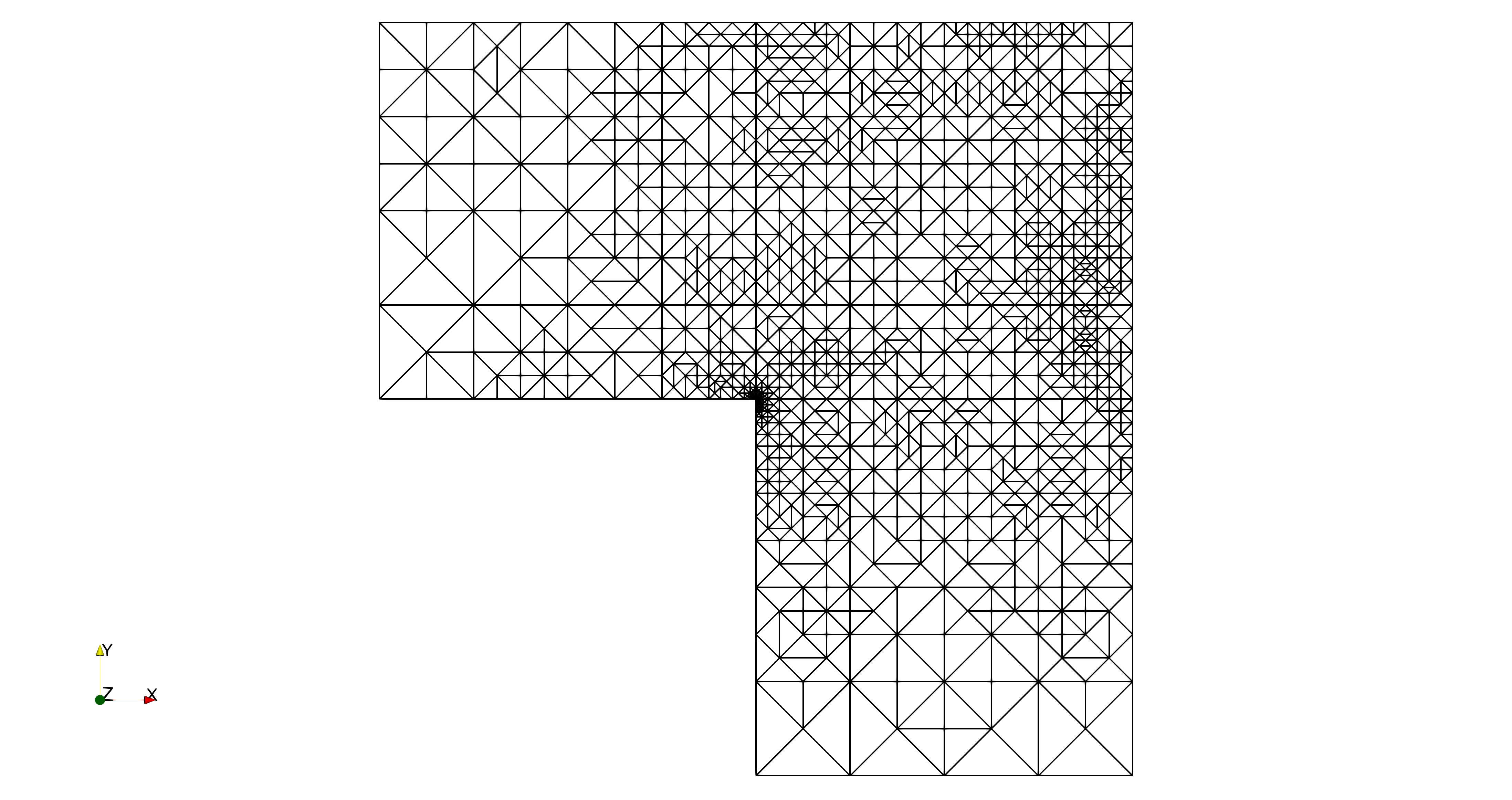}
	\end{minipage}
	\begin{minipage}{0.32\linewidth}
		\includegraphics[scale=0.05,trim= 37cm 1cm 37cm 1cm, clip]{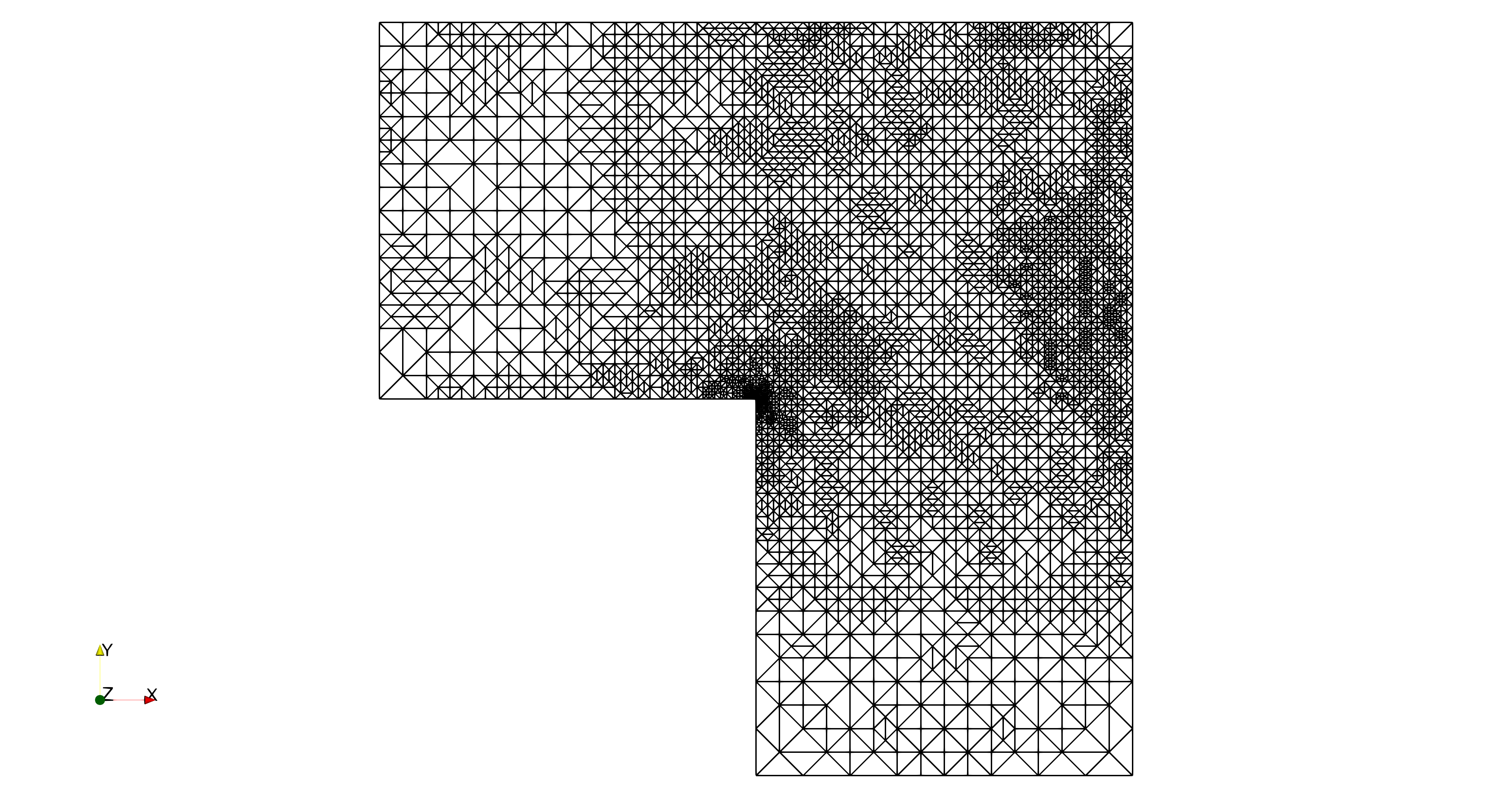}
	\end{minipage}
	\begin{minipage}{0.32\linewidth}
		\includegraphics[scale=0.05,trim= 37cm 1cm 37cm 1cm, clip]{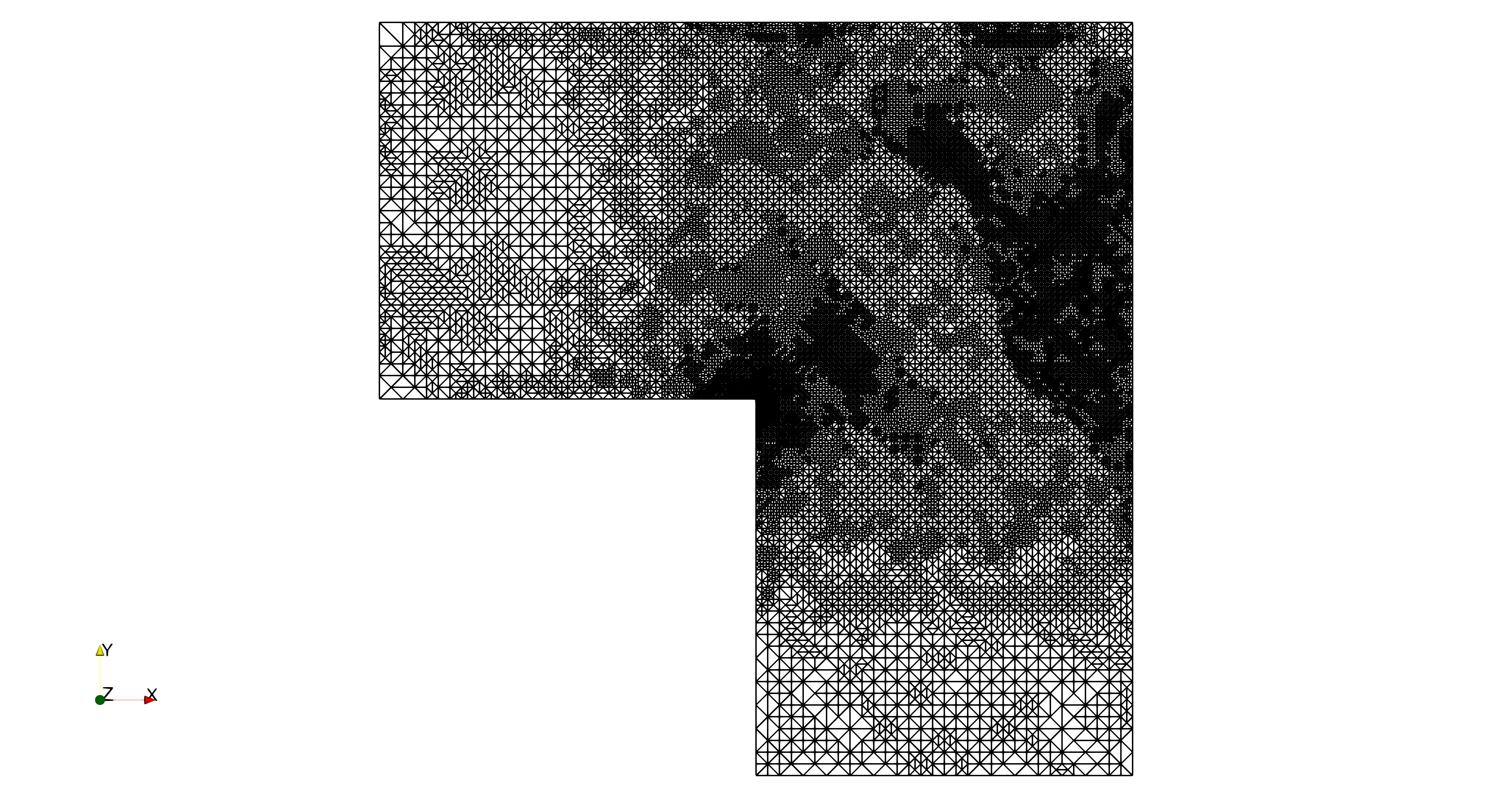}
	\end{minipage}\\
	\begin{minipage}{0.32\linewidth}
		\includegraphics[scale=0.05,trim= 37cm 1cm 37cm 1cm, clip]{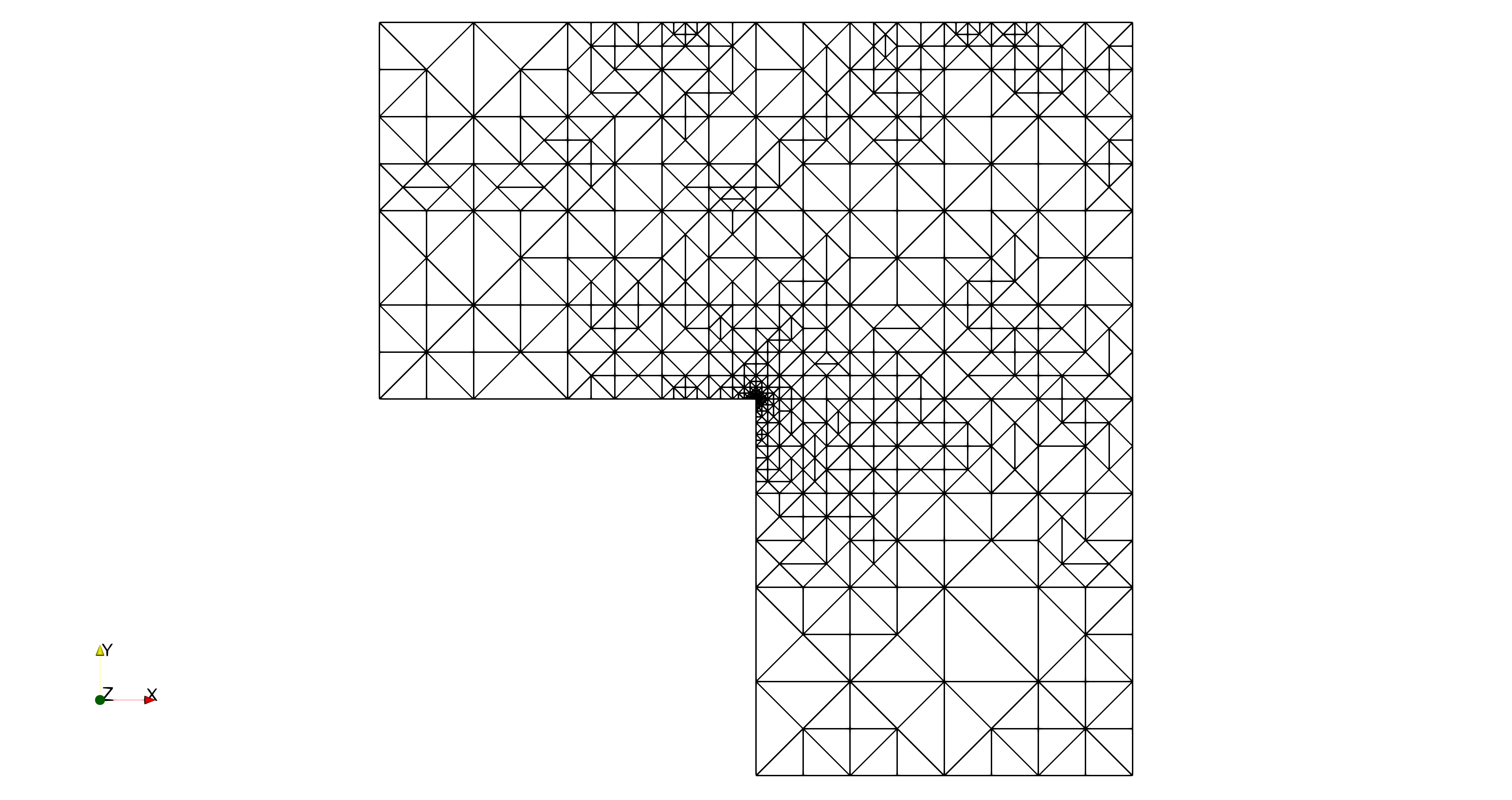}
	\end{minipage}
	\begin{minipage}{0.32\linewidth}
		\includegraphics[scale=0.05,trim= 37cm 1cm 37cm 1cm, clip]{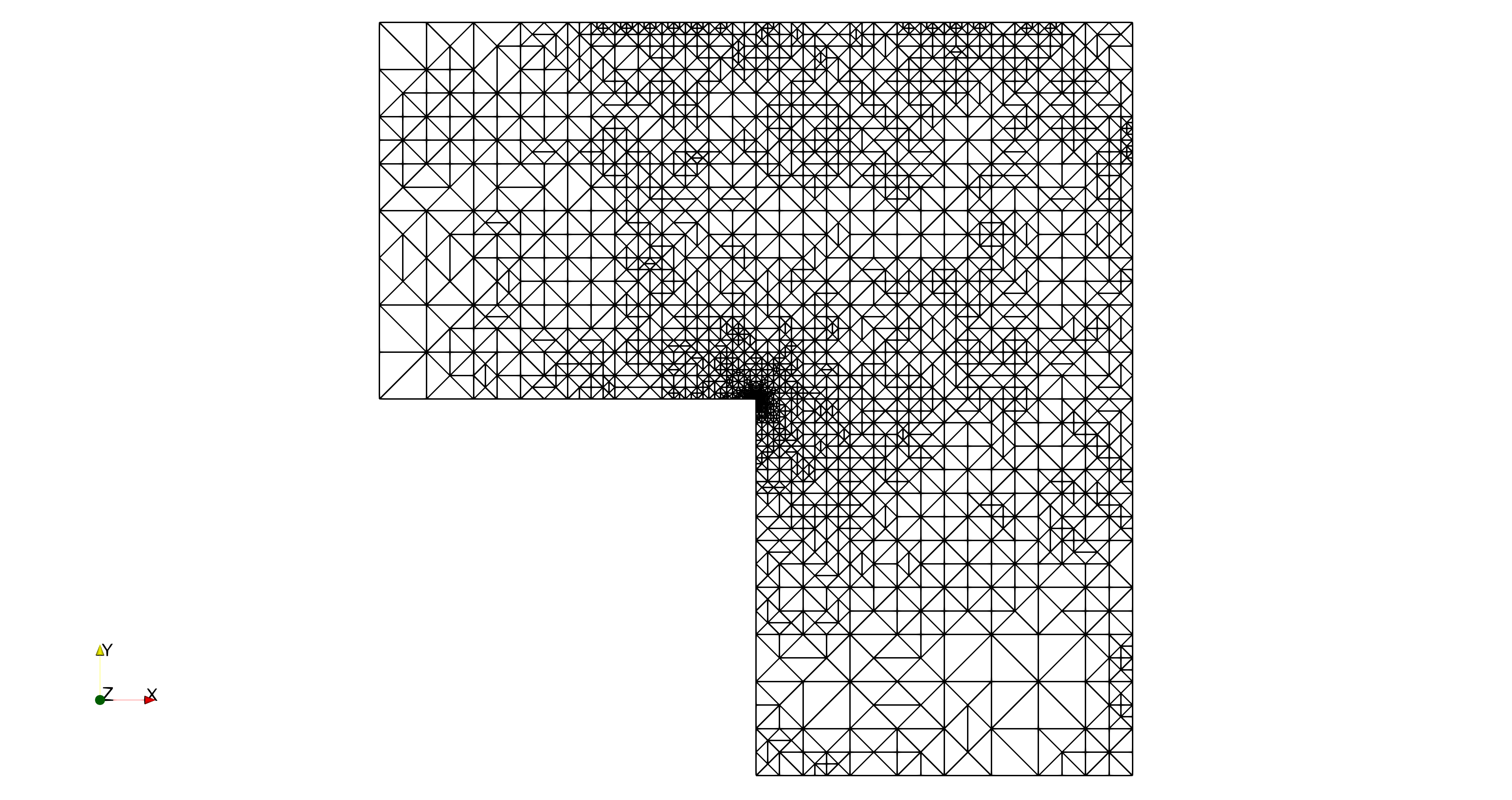}
	\end{minipage}
	\begin{minipage}{0.32\linewidth}
		\includegraphics[scale=0.05,trim= 37cm 1cm 37cm 1cm, clip]{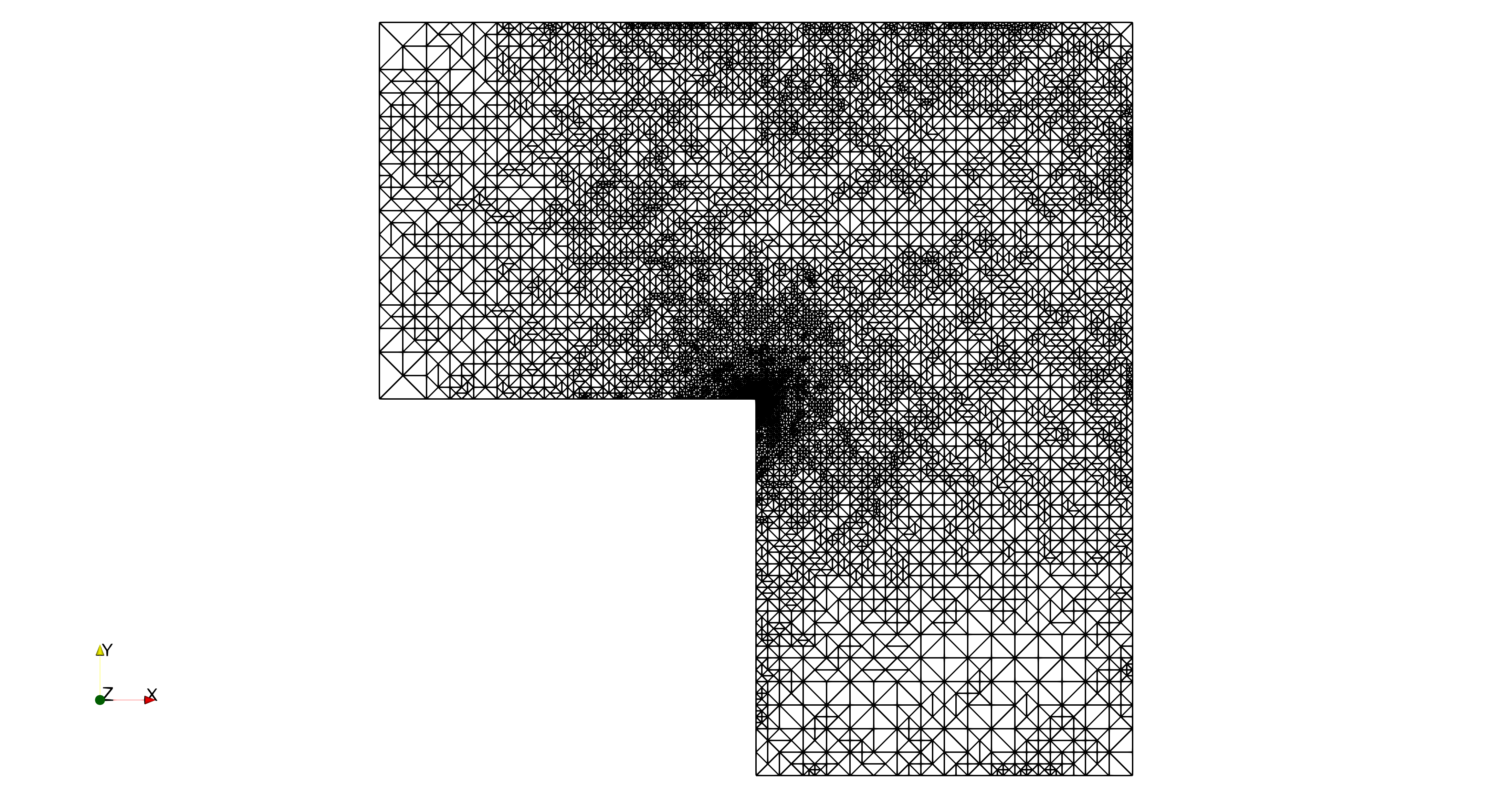}
	\end{minipage}\\
		\begin{minipage}{0.32\linewidth}
		\includegraphics[scale=0.05,trim= 32cm 1cm 32cm 1cm, clip]{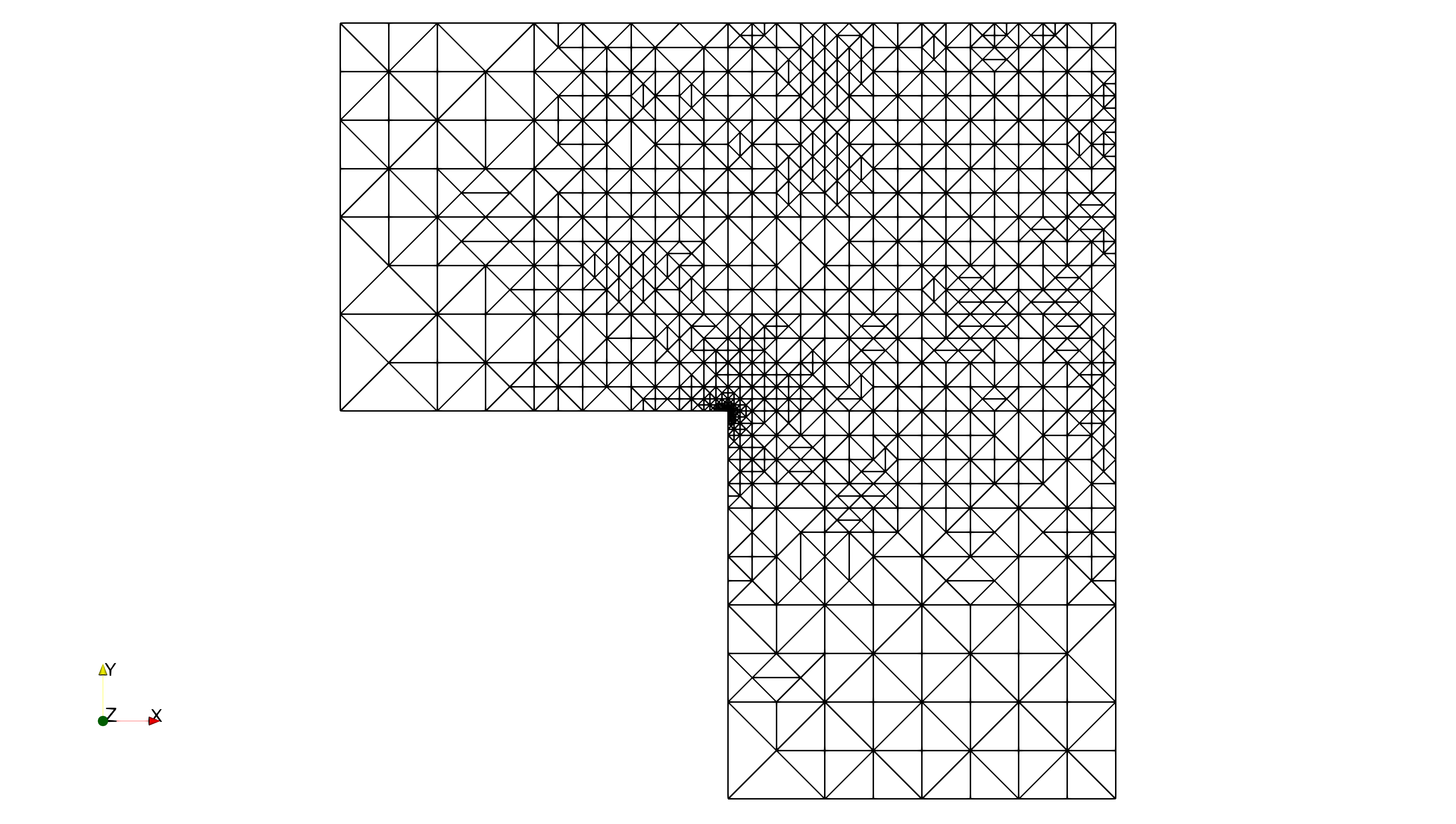}
	\end{minipage}
	\begin{minipage}{0.32\linewidth}
		\includegraphics[scale=0.05,trim= 32cm 1cm 32cm 1cm, clip]{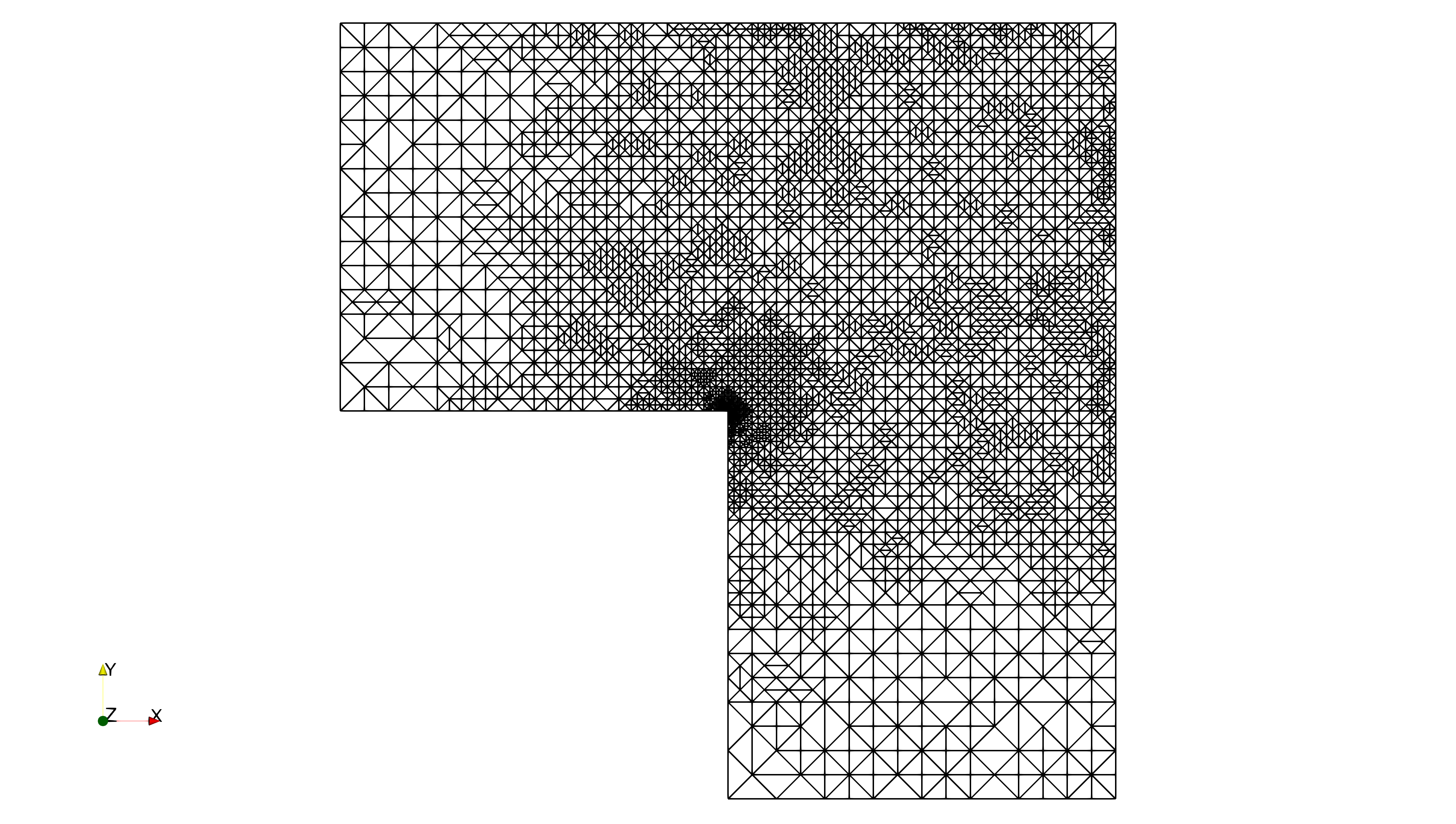}
	\end{minipage}
	\begin{minipage}{0.32\linewidth}
		\includegraphics[scale=0.05,trim= 32cm 1cm 32cm 1cm, clip]{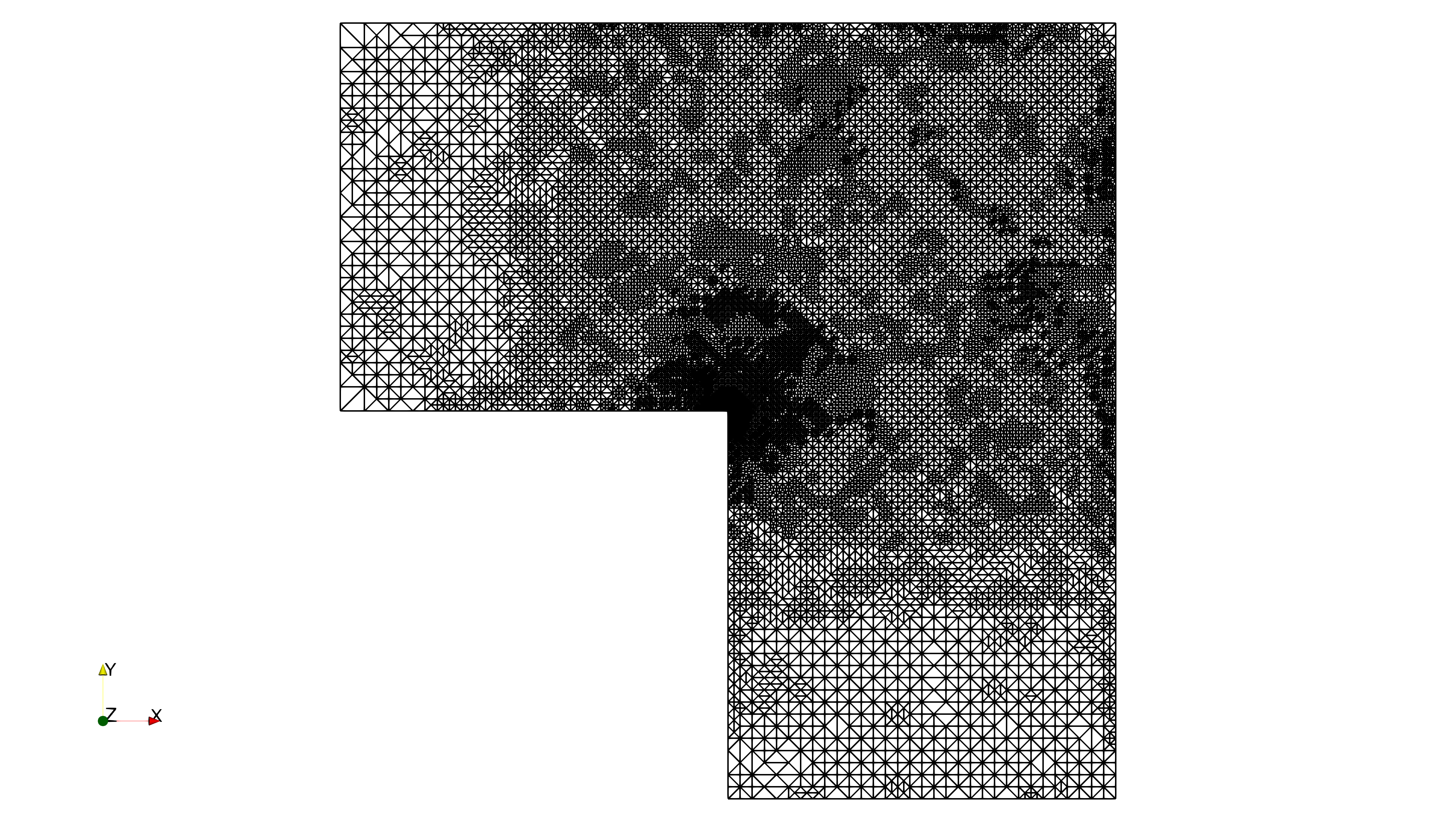}
	\end{minipage}\\
	\caption{Example \ref{subsec:lshape2D}. Comparison of intermediate meshes for the adaptive algorithm on the primal and dual  problem when computing the lowest order eigenvalue. Top: Intermediate meshes for iterations $i=7,11,15$ with $7076, 30191$ and  $131770$ degrees of freedom, respectively, using estimator $\eta$. Middle: Intermediate meshes for iterations $i=7,11,15$ with $3600, 14227$ and $52762$ degrees of freedom, respectively, using estimator $\eta^*$. Bottom: Intermediate meshes for iterations $i=7,11,15$ with $5995, 26074$ and $110671$ degrees of freedom, respectively, using estimator $\theta$.  }
	\label{fig:lshape2D-meshes}
\end{figure}
\begin{figure}[h!]
	\centering
	\includegraphics[scale=0.45]{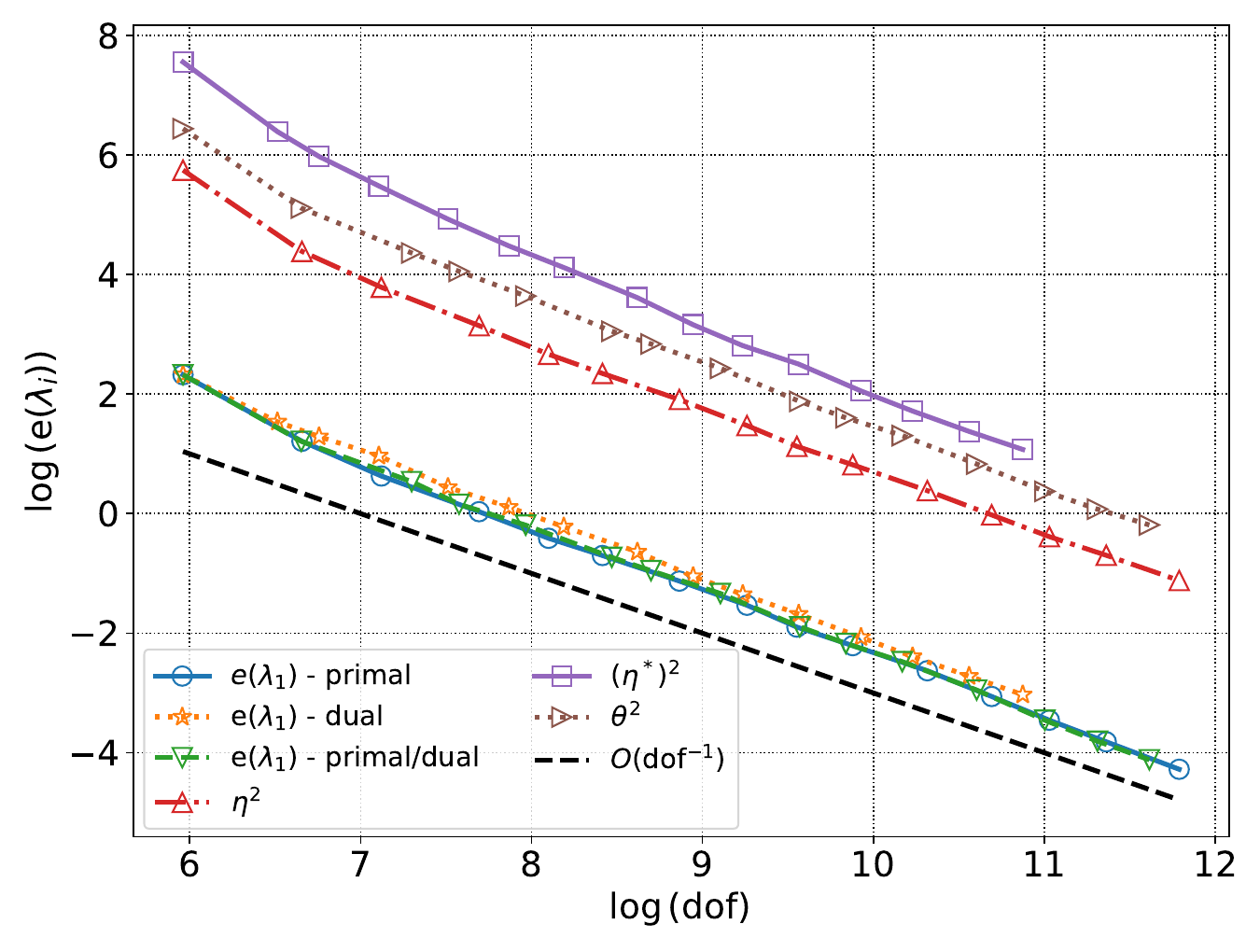}
	\caption{Example \ref{subsec:lshape2D}. Error curves obtained from the adaptive algorithm for the primal and dual problems compared with their corresponding estimators $\eta$ and $\eta_*$, respectively, and the optimal line $\mathcal{O}(\texttt{dof}^{-1})$.}
	\label{fig:lshape2D-error}
\end{figure}

\subsubsection{3D L-shaped domain}\label{subsec:lshape3D}
This final test presents the estimator performance when a three dimensional domain with a dihedral singularity is considered. The domain is an L-shaped domain given by
$$
\Omega:=(-1/2,1/2)\times(0,1)\times(-1/2,1/2)\backslash\big((0,1/2)\times(0,1)\times(0,1/2)\big).
$$
Note that this domain has a singularity along the line $(0,y,0)$, for $y\in[0,1]$, so the convergence with uniform meshes will be, at best, $\mathcal{O}(\texttt{dof}^{-0.44})$. The extrapolated eigenvalues taken as the exact solution for the primal and dual problem is given by 

$$\lambda_{1}=83.0386474910940$$

In Table \ref{table-lshape3D-first-eigenvalue-primal} we observe the estimator performance for 11 iterations. By observing the estimator, it notes that most of the contributions come from the volumetric integrals, followed by the jump terms. On each iterations, both contributions are bigger than the actual error, but the effectivity index remains bounded. Similar behavior is observed in the dual adaptive iterations, presented in Table \ref{table-lshape3D-first-eigenvalue-dual}, where a bigger volumetric contribution is observed. Similar to the two-dimensional L-shaped domain, the dual adaptive refinements tends to mark less elements, as observed in the final iteration. 

As a graphical evidence of the above, we present in Figure \ref{fig:lshape2D-meshes} two iteration steps, including the last one, of the adaptive algorithm for the primal and dual formulation. In both cases, the refinement is prioritized near the singular line. On Figure \ref{fig:lshape3D-error} we observe that the estimators contributions decays as $\mathcal{O}(\texttt{dof}^{-0.66})$, similar to the error curves. Finally, Figure \ref{fig:lshape3D-uh_ph} depicts the velocity field and the singular pressure contour plot for the lowest computed eigenvalue. Note that high pressure gradients are formed near the dihedral singularity.

\begin{table}[t!]
	\setlength{\tabcolsep}{3.5pt}
	\centering 
	\caption{Example \ref{subsec:lshape3D}. Comparison of the error history of the lowest computed primal eigenvalue with the global residual terms and $\eta^2$, on the three-dimensional L-shaped domain. Here, the convective velocity is set to be $\boldsymbol{\beta}=(0,0,1)^{\texttt{t}}$. }
	{\footnotesize\begin{tabular}{rccccccc}
			\hline\hline
			dof   &   $\err(\lambda_1)$  &   $\mathbf{R}$   &   $\mathbf{D}$  &  $\mathbf{J}$& $\eta^2$&  $\eff(\lambda_1)$  \\
			\hline 
			\hline
5073 &  $3.1589e+01  $ & $1.0244e+03  $   & $9.7776e+00 $  & $8.5554e+02 $  & $1.8897e+03 $ & $1.8615e-03 $ \\
16092 &  $1.2533e+01  $ & $3.4931e+02  $   & $2.9520e+00 $  & $3.0394e+02 $  & $6.5621e+02 $ & $2.8581e-03 $ \\
27001 &  $8.9541e+00  $ & $2.2699e+02  $   & $2.1150e+00 $  & $2.1536e+02 $  & $4.4446e+02 $ & $3.1296e-03 $ \\
75499 &  $4.3362e+00  $ & $1.2175e+02  $   & $1.0639e+00 $  & $1.1217e+02 $  & $2.3499e+02 $ & $3.0598e-03 $ \\
122781 &  $2.9899e+00  $ & $8.1738e+01  $   & $7.5005e-01 $  & $7.8883e+01 $  & $1.6137e+02 $ & $3.3887e-03 $ \\
129185 &  $2.8751e+00  $ & $7.7128e+01  $   & $7.2486e-01 $  & $7.6383e+01 $  & $1.5424e+02 $ & $3.4754e-03 $ \\
284817 &  $1.5336e+00  $ & $4.6190e+01  $   & $4.4394e-01 $  & $4.7043e+01 $  & $9.3677e+01 $ & $3.7368e-03 $ \\
309110 &  $1.4246e+00  $ & $4.2328e+01  $   & $4.2070e-01 $  & $4.4609e+01 $  & $8.7357e+01 $ & $3.9128e-03 $ \\
533602 &  $8.1289e-01  $ & $2.8754e+01  $   & $3.0096e-01 $  & $3.1060e+01 $  & $6.0115e+01 $ & $3.9862e-03 $ \\
559552 &  $7.7423e-01  $ & $2.7462e+01  $   & $2.9382e-01 $  & $3.0240e+01 $  & $5.7996e+01 $ & $4.0587e-03 $ \\
1157348 &  $2.3114e-01  $ & $1.5773e+01  $   & $1.7570e-01 $  & $1.8017e+01 $  & $3.3966e+01 $ & $3.8112e-03 $ \\
			\hline
			\hline
	\end{tabular}}
	\smallskip
	\label{table-lshape3D-first-eigenvalue-primal}
\end{table}

\begin{table}[t!]
	\setlength{\tabcolsep}{3.5pt}
	\centering 
	\caption{Example \ref{subsec:lshape3D}. Comparison of the error history of the lowest computed dual eigenvalue with the global residual terms and $(\eta^*)^2$, on the three-dimensional L-shaped domain. Here, the convective velocity is set to be $\boldsymbol{\beta}=(0,0,1)^{\texttt{t}}$. }
	{\footnotesize\begin{tabular}{rccccccc}
			\hline\hline
			dof   &   $\err_*(\lambda_1)$  &   $\mathbf{R}^*$   &   $\mathbf{D}^*$  &  $\mathbf{J}^*$& $(\eta^*)^2$&  $\eff_*(\lambda_1)$  \\
			\hline 
			\hline
			5073 &  $3.1956e+01  $   & $1.7098e+04  $   & $9.4481e+00 $  & $5.9935e+01 $  & $1.7167e+04 $ & $1.8401e-03$ \\
			11103 &  $1.8014e+01  $   & $6.2751e+03  $   & $4.4214e+00 $  & $2.3181e+01 $  & $6.3027e+03 $ & $1.9885e-03$ \\
			28238 &  $9.9352e+00  $   & $3.1636e+03  $   & $2.2270e+00 $  & $8.8048e+00 $  & $3.1746e+03 $ & $2.8205e-03$ \\
			65815 &  $5.5110e+00  $   & $1.7962e+03  $   & $1.1901e+00 $  & $3.6815e+00 $  & $1.8011e+03 $ & $2.4075e-03$ \\
			85009 &  $4.6080e+00  $   & $1.3560e+03  $   & $9.8976e-01 $  & $2.7953e+00 $  & $1.3598e+03 $ & $2.1988e-03$ \\
			159324 &  $2.9813e+00  $   & $8.5567e+02  $   & $6.4967e-01 $  & $1.5080e+00 $  & $8.5782e+02 $ & $3.3516e-03$ \\
			216321 &  $2.4336e+00  $   & $6.4960e+02  $   & $5.2799e-01 $  & $1.1266e+00 $  & $6.5126e+02 $ & $2.3548e-03$ \\
			291129 &  $2.0428e+00  $   & $5.2077e+02  $   & $4.5163e-01 $  & $8.8071e-01 $  & $5.2210e+02 $ & $2.7287e-03$ \\
			411620 &  $1.5973e+00  $   & $3.9972e+02  $   & $3.6153e-01 $  & $6.3506e-01 $  & $4.0072e+02 $ & $2.0286e-03$ \\
			493495 &  $1.3819e+00  $   & $3.3963e+02  $   & $3.2181e-01 $  & $5.2684e-01 $  & $3.4048e+02 $ & $2.2739e-03$ \\
			831334 &  $8.8359e-01  $   & $2.3131e+02  $   & $2.2585e-01 $  & $3.0183e-01 $  & $2.3184e+02 $ & $9.9700e-04$ \\
			\hline
			\hline
	\end{tabular}}
	\smallskip
	\label{table-lshape3D-first-eigenvalue-dual}
\end{table}

\begin{figure}[!h]
	\centering
	\begin{minipage}{0.48\linewidth}\centering
		\includegraphics[scale=0.065,trim= 37cm 1cm 37cm 1cm, clip]{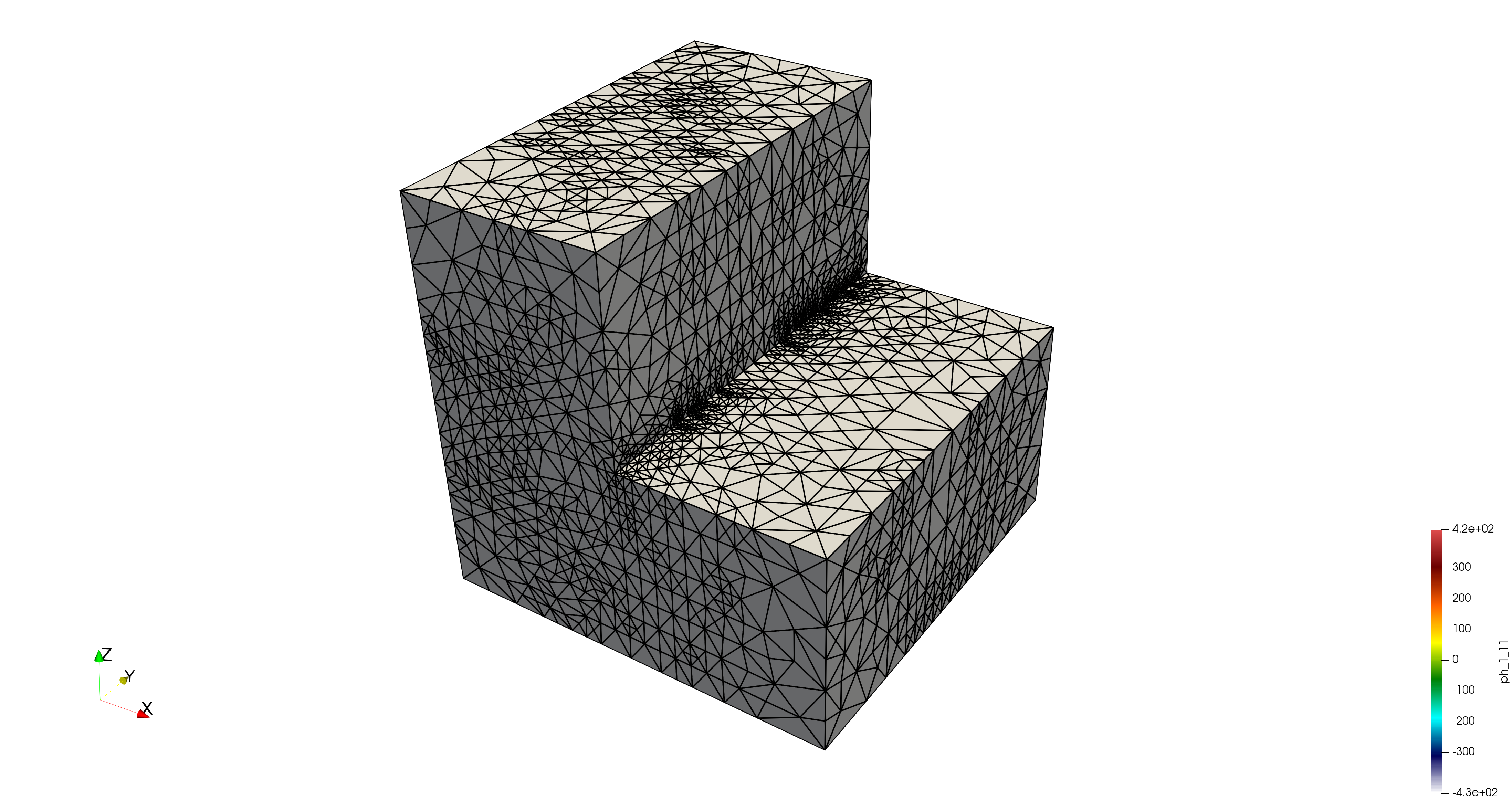}
	\end{minipage}
	\begin{minipage}{0.48\linewidth}\centering
		\includegraphics[scale=0.065,trim= 37cm 1cm 37cm 1cm, clip]{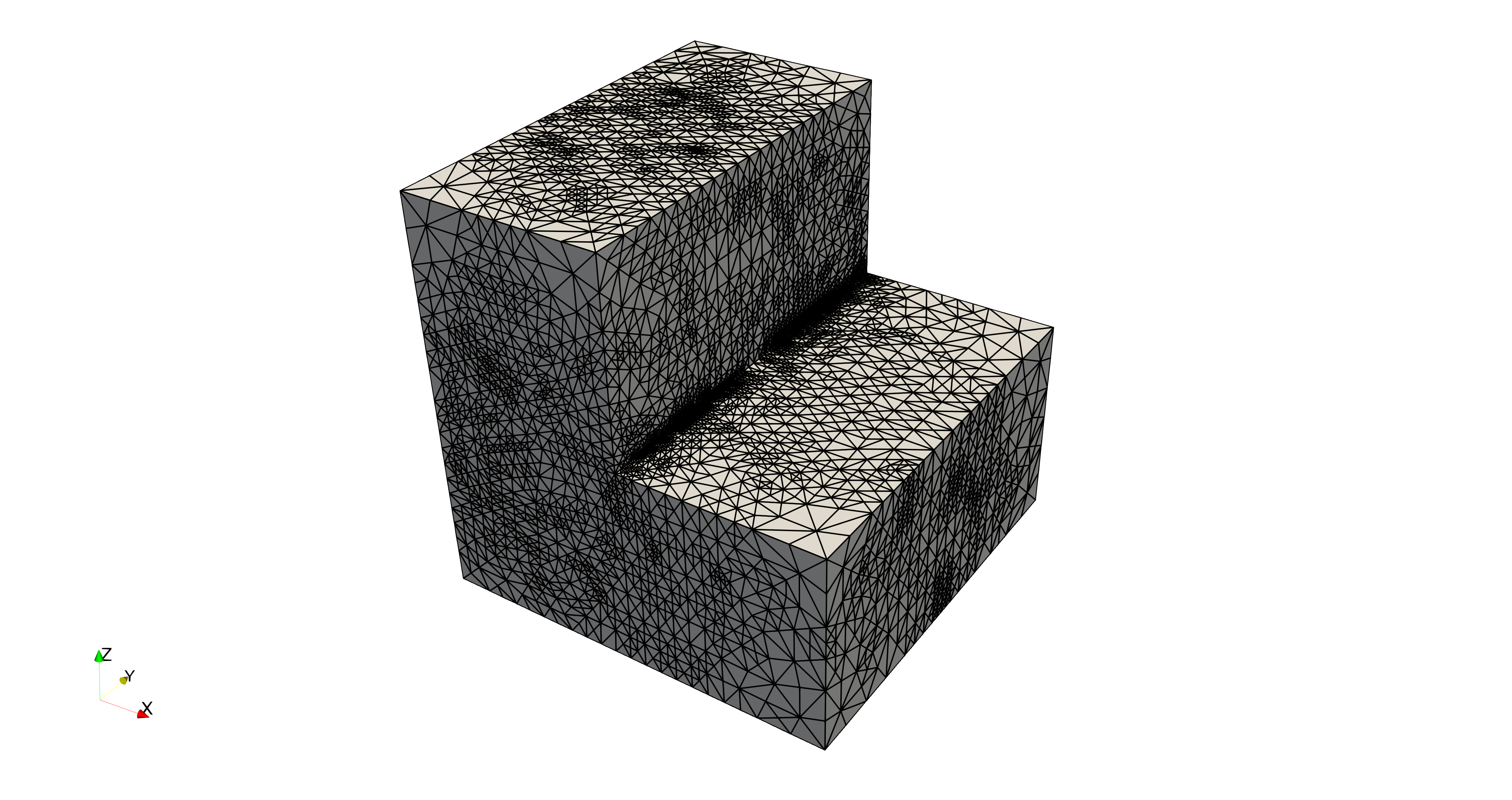}
	\end{minipage}\\
	\begin{minipage}{0.48\linewidth}\centering
		\includegraphics[scale=0.065,trim= 37cm 1cm 37cm 1cm, clip]{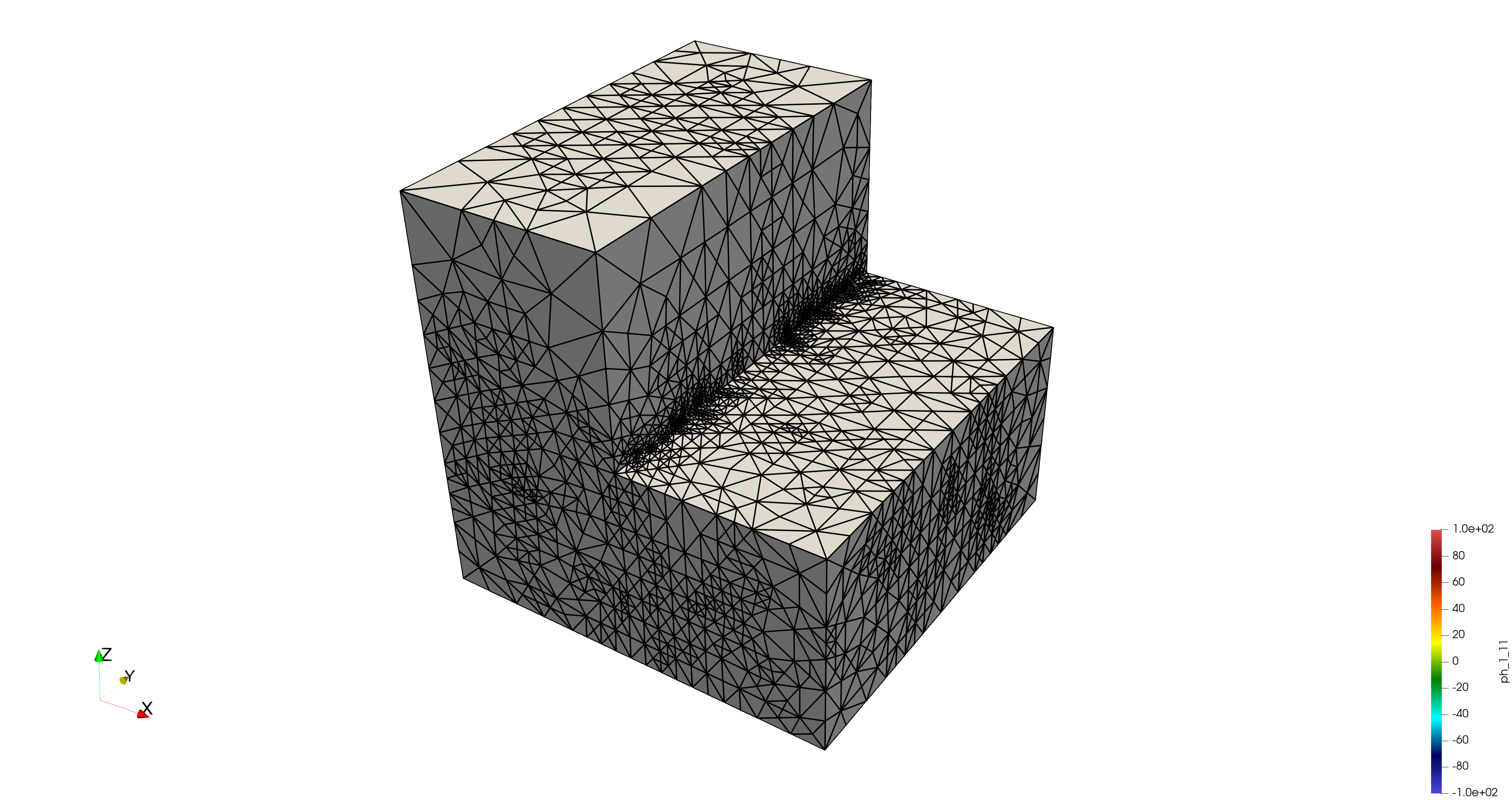}
	\end{minipage}
	\begin{minipage}{0.48\linewidth}\centering
		\includegraphics[scale=0.065,trim= 37cm 1cm 37cm 1cm, clip]{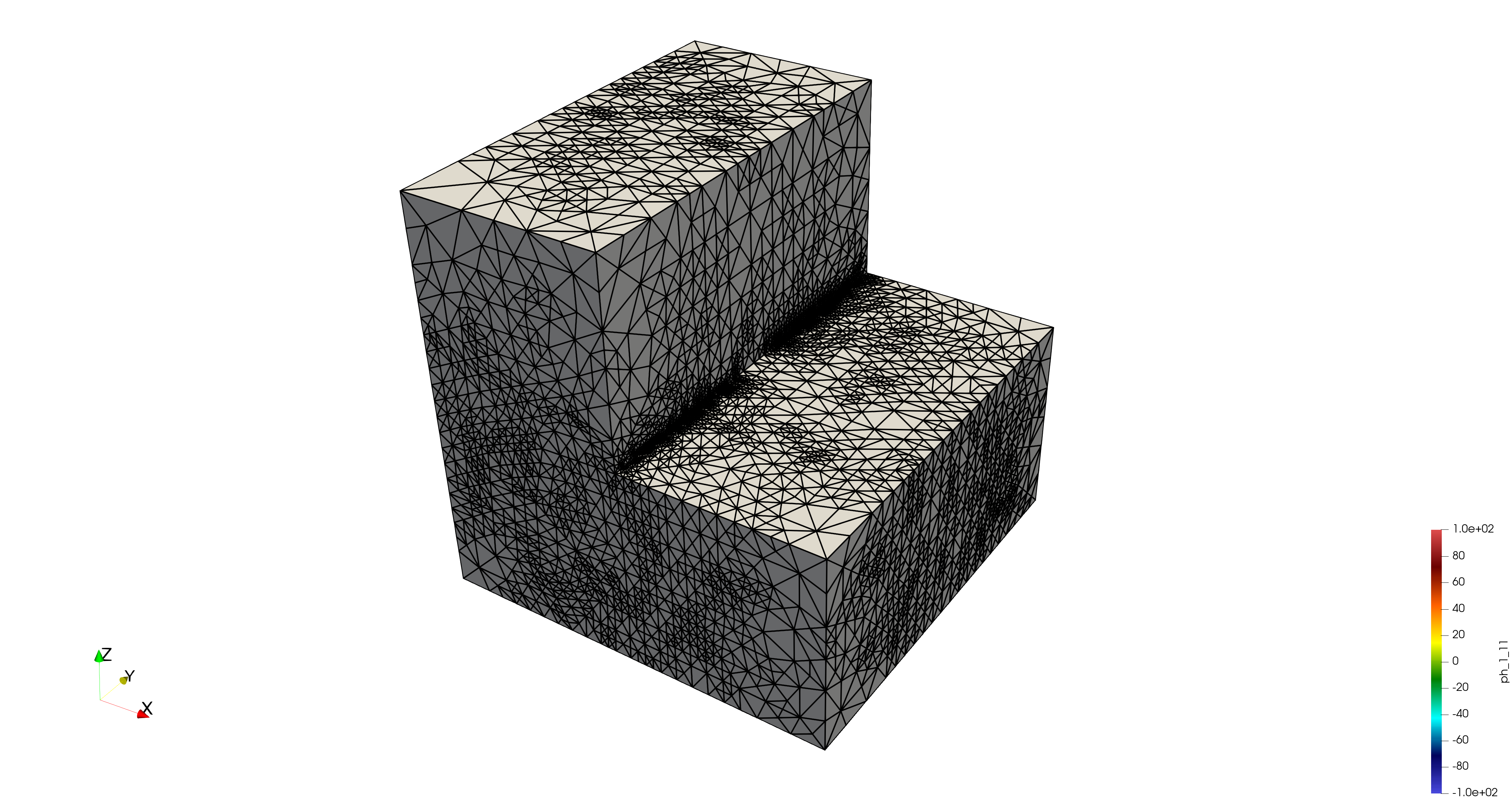}
	\end{minipage}\\
	\begin{minipage}{0.48\linewidth}\centering
		\includegraphics[scale=0.065,trim= 37cm 1cm 37cm 1cm, clip]{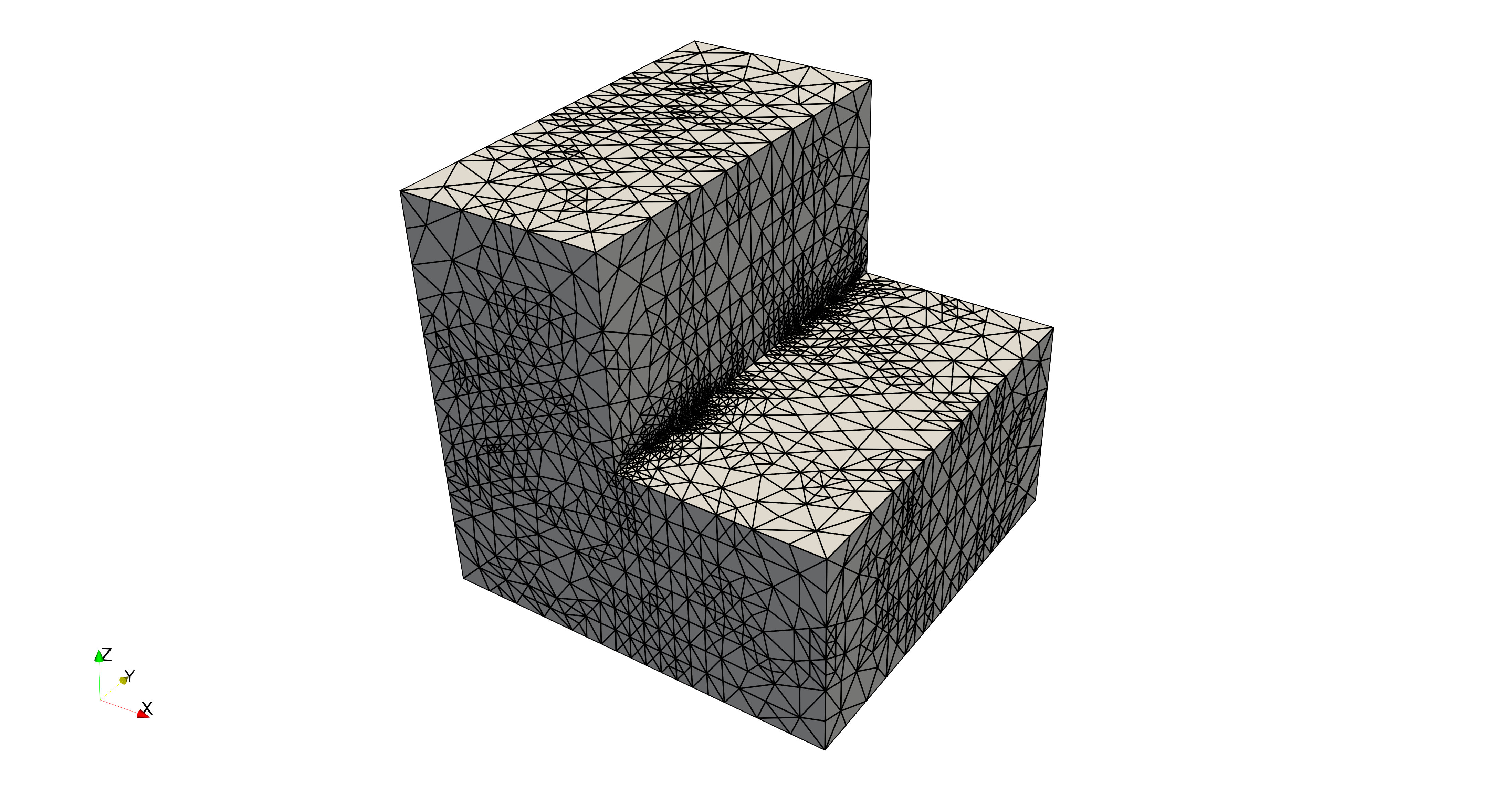}
	\end{minipage}
	\begin{minipage}{0.48\linewidth}\centering
		\includegraphics[scale=0.065,trim= 37cm 1cm 37cm 1cm, clip]{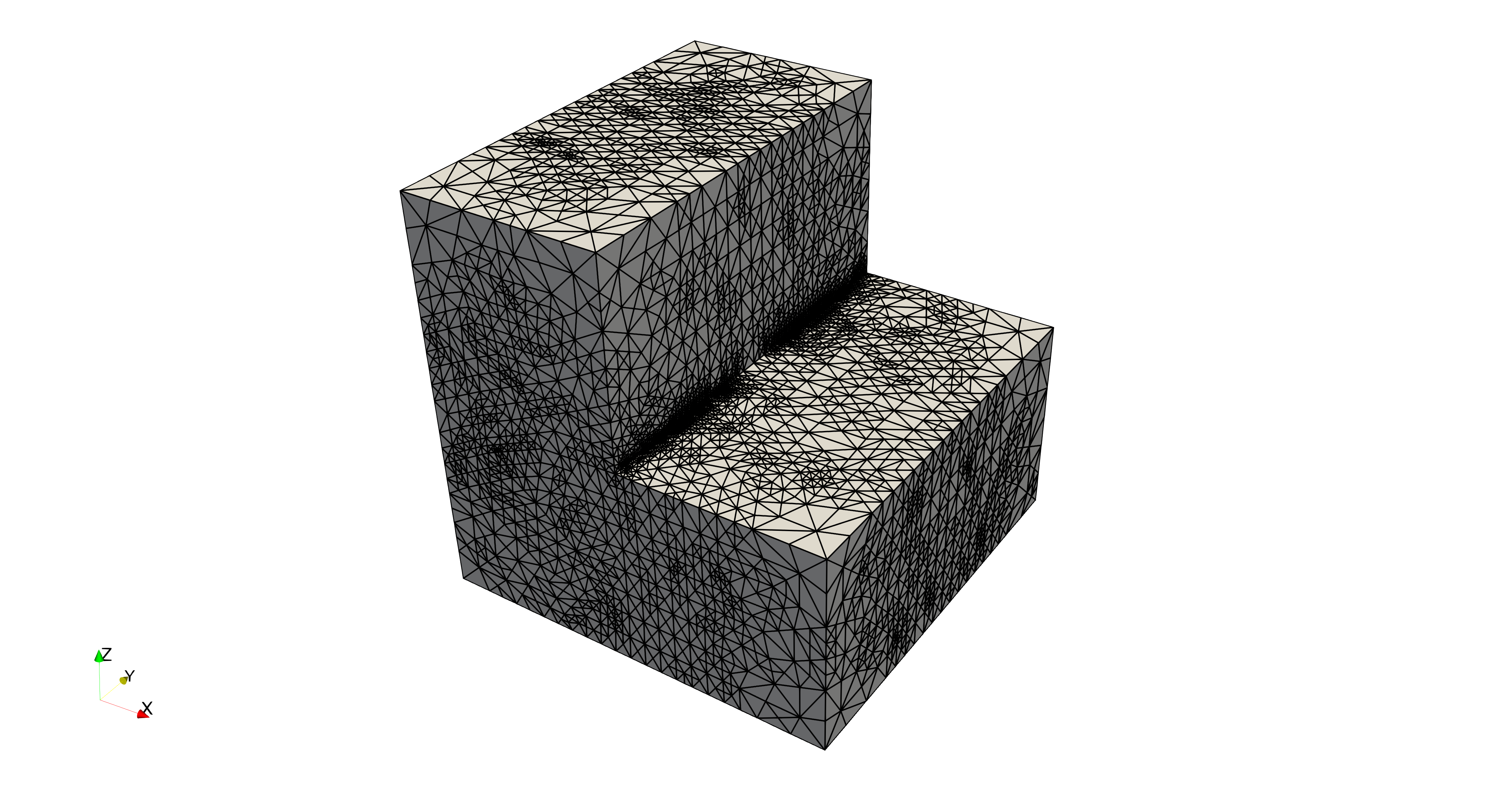}
	\end{minipage}\\
	\caption{Example \ref{subsec:lshape3D}. Comparison of intermediate meshes for the adaptive algorithm on the primal and dual  problem when computing the lowest order eigenvalue in the 3D L-shaped domain. Top: Intermediate meshes for iterations $i=8,11,$ with $309110$ and  $1157348$ degrees of freedom, respectively, using estimator $\eta$. Middle: Intermediate meshes for iterations $i=8,11,$ with $159324$ and $831334$ degrees of freedom, respectively, using estimator $\eta^*$. Bottom: Intermediate meshes for iterations $i=8,11,$ with $374564$ and $921282$ degrees of freedom, respectively, using estimator $\theta$.}
	\label{fig:lshape3D-meshes}
\end{figure}
\begin{figure}[h!]
	\centering
	\includegraphics[scale=0.45]{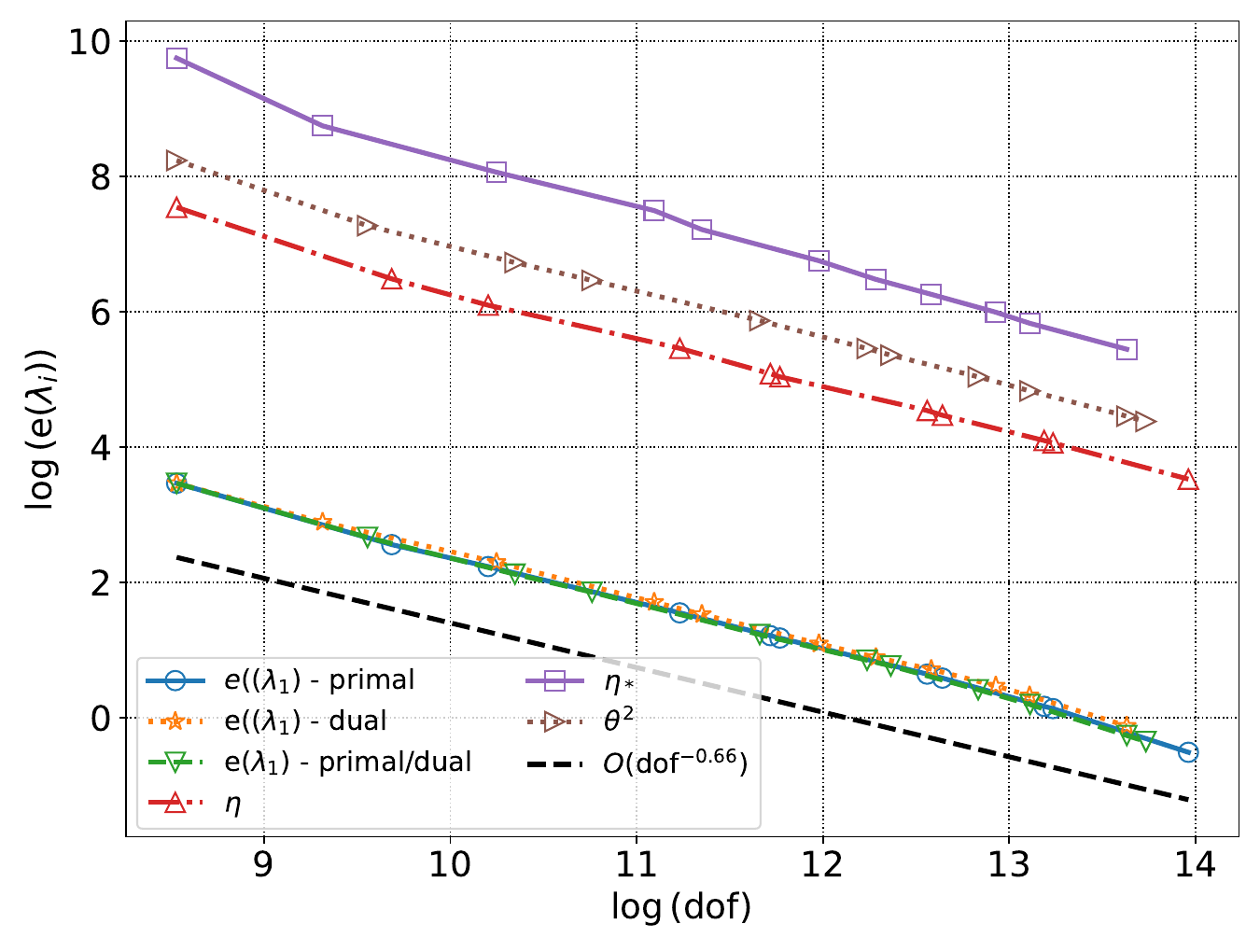}
	\caption{Example \ref{subsec:lshape3D}. Error curves obtained from the adaptive algorithm for the primal and dual problems compared with their corresponding estimators $\eta$ and $\eta_*$, respectively, and the optimal line $\mathcal{O}(\texttt{dof}^{-0.66})$.}
	\label{fig:lshape3D-error}
\end{figure}
\begin{figure}[!h]
	\centering
	\begin{minipage}{0.48\linewidth}\centering
		\includegraphics[scale=0.065,trim= 36cm 1cm 37cm 1cm, clip]{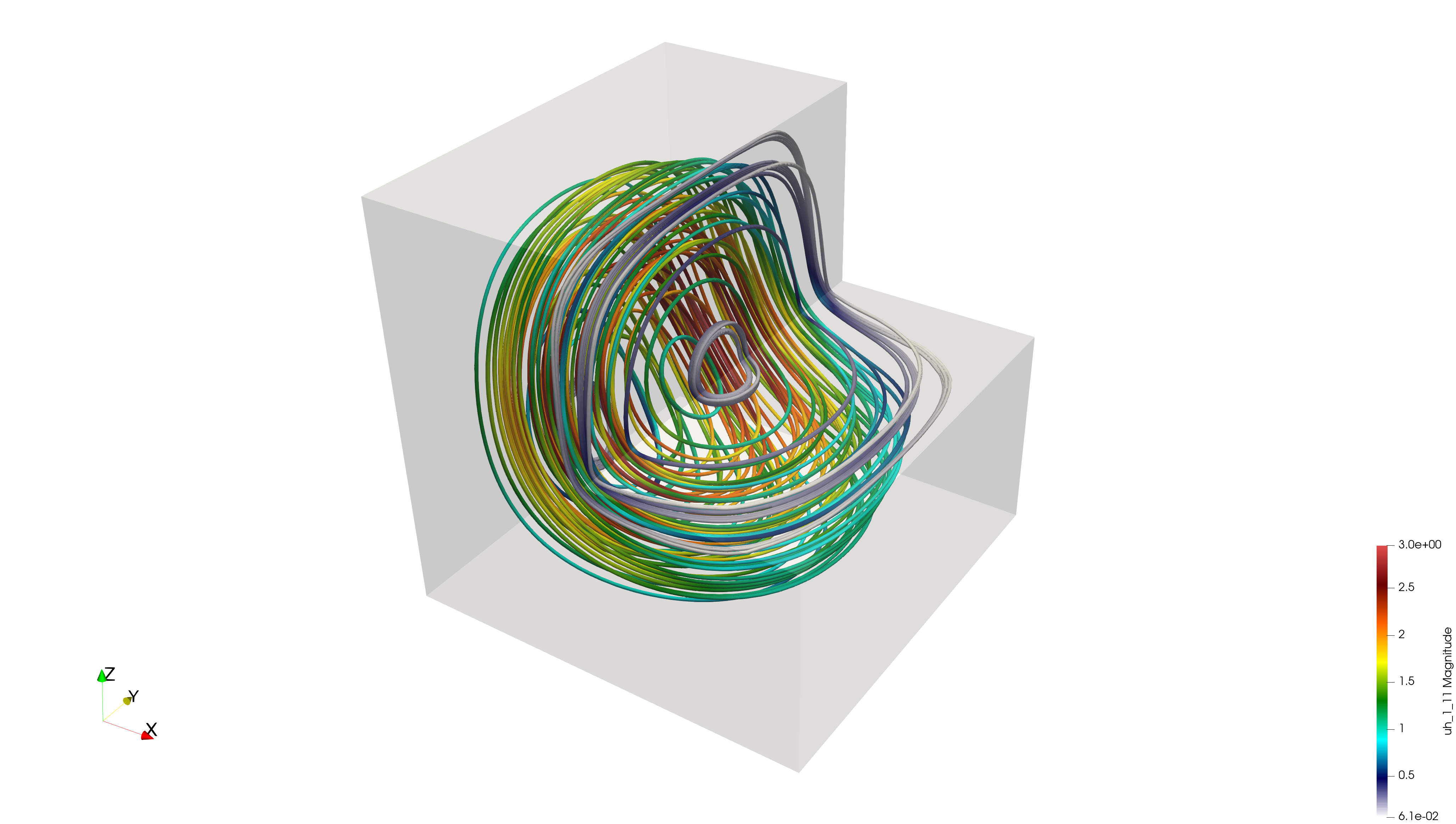}
	\end{minipage}
	\begin{minipage}{0.48\linewidth}\centering
		\includegraphics[scale=0.065,trim= 37cm 1cm 37cm 1cm, clip]{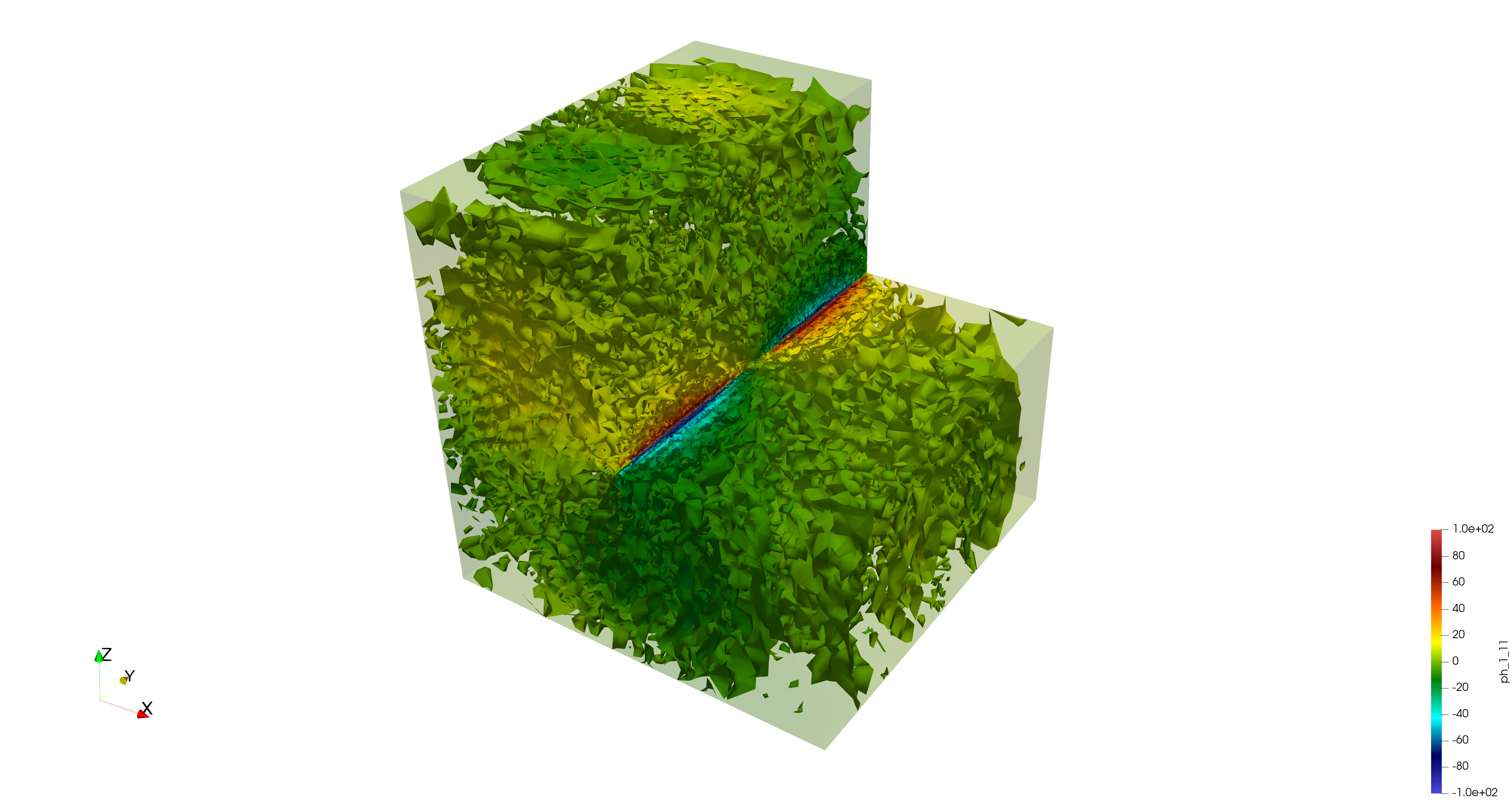}
	\end{minipage}\\
	\caption{Example \ref{subsec:lshape3D}. Velocity field for the lowest order computed eigenmodes for the primal and dual problems (left), together with the corresponding singular pressure contour surface plot (right).}
	\label{fig:lshape3D-uh_ph}
\end{figure}


\bibliographystyle{siamplain}
\bibliography{oseen-eigenvalue}
\end{document}